\newcommand{\R}{{\ensuremath{\mathbb{R}}}}
\newcommand{\N}{{\ensuremath{\mathbb{N}}}}
\newcommand{\C}{{\ensuremath{\mathbb{C}}}}
\def\U{\mathcal{U}}
\def\lm{\lambda}
\def\ep{\epsilon}
\def\EU{\mathrm{EUCP}}
\def\dx{\mathrm{Dix}}
\def\dxbar{\overline{\mathrm{Dix}}}
\def\mg{\mathrm{Mag}}
\def\mgbar{\overline{\mathrm{Mag}}}
\def\P{\mathrm{Prim}}
\def\M{\mathrm{Max}}
\def\I{\mathrm{Ideal}}
\def\G{\mathrm{Glimm}}
\def\sp{\mathrm{sp}}
\def\ds{\mathrm{dist}}
\def\spn{\mathop{\mathrm{span}}}
\def\di{\mathop{\mathrm{diag}}}
\def\ds{\mathop{\mathrm{dist}}}
\def\int{\mathop{\mathrm{Int}}}
\def\cq{\mathrm{CQ}}
\newtheorem{proposition}{Proposition}[section]
\newtheorem{lemma}[proposition]{Lemma}
\newtheorem{theorem}[proposition]{Theorem}
\newtheorem{corollary}[proposition]{Corollary}
\theoremstyle{definition}
\newtheorem{remark}[proposition]{Remark}
\newtheorem{problem}[proposition]{Problem}
\newtheorem{example}[proposition]{Example}
\numberwithin{equation}{section}
\begin{document}

\title[]{Local variants of the Dixmier property and weak centrality for C*-algebras}

%A local approach to the Dixmier property and weak centrality for C*-algebras.

\author{Robert J. Archbold}
\author{Ilja Gogi\'c}
\author{Leonel Robert}

\address{R.~J.~Archbold,  Institute of Mathematics, University of Aberdeen, King's College, Aberdeen AB24 3UE, Scotland, United Kingdom}
\email{r.archbold@abdn.ac.uk}

\address{I.~Gogi\'c, Department of Mathematics, Faculty of Science, University of Zagreb, Bijeni\v{c}ka 30, 10000 Zagreb, Croatia}
\email{ilja@math.hr}

\address{L. Robert, Department of Mathematics, University of Louisiana at Lafayette, 
Lafayette, LA 70504-1010, USA}
\email{lrobert@louisiana.edu}

%\thanks{The second named author was partially supported by the Croatian Science Foundation under the project IP-2016-06-1046.}

\keywords{C*-algebra, the Dixmier property, weak centrality, tracial state, commutator, quasinilpotent element}

\subjclass[2010]{Primary 46L05; Secondary 47B47}

\date{\today}

\begin{abstract}
We study variants of the Dixmier property that apply to elements of a unital C*-algebra, rather than to the C*-algebra itself. By a  Dixmier element in a C*-algebra we understand one that can be averaged into a central element by means of a sequence of unitary mixing operators. Examples include all self-commutators and all quasinilpotent elements. We do a parallel study of an element-wise version of weak centrality, where the averaging to the centre is done using unital completely positive elementary operators (as in Magajna's characterization of weak centrality).  We also obtain complete descriptions of more tractable sets of elements, where the corresponding averaging can be done arbitrarily close to the centre. This is achieved through several ``spectral conditions'', involving numerical ranges and tracial states. 

\end{abstract}

\maketitle

\section{Introduction}

Let $A$ be a unital $C^*$-algebra with centre $Z(A)$ and unitary group $\U(A)$. By a \emph{unitary mixing operator} on $A$ we mean a map $\phi \colon A \to A$ of the form
$$
\phi(x)=\sum_{i=1}^n t_i u_i^* x u_i,
$$
where $n$ is a positive integer, $u_1, \ldots, u_n \in \mathcal{U(A)}$ and $t_1, \ldots, t_n$ non-negative real numbers such that $t_1+ \ldots + t_n=1$. The set of all such maps is denoted by $\mathrm{Av}(A,\mathcal{U(A)})$.  Given $a \in A$, we define the \emph{Dixmier set} $D_A(a)$  as the norm-closure of the set $\{\phi(a) : \phi\in\mathrm{Av}(A,\mathcal{U(A)})\}$. If $D_A(a)\cap Z(A)\neq \varnothing$ for all $a \in A$, then $A$ is said to have the Dixmier property. 

It is well-known that if $A$ satisfies the Dixmier property, then $A$ is weakly central, that is, for any pair of maximal ideals  $M_1$ and $M_2$  of $A$, $M_1 \cap Z(A) =M_2 \cap Z(A)$ implies $M_1=M_2$ (see e.g. \cite[p.~275]{Arc78}). It was shown by Haagerup and Zsid\'o in \cite{HZ} that a unital simple $C^*$-algebra satisfies the Dixmier property if and only if it admits at most one tracial state. In particular, weak centrality is not sufficient to guarantee the Dixmier property. A complete generalization of the Haagerup-Zsid\'o theorem was obtained in \cite[Theorem~2.6]{ART}.

%In general, weak centrality is not sufficient to guarantee the Dixmier property. More specifically, it was shown by Haagerup and Zsid\'o in \cite{HZ} that a unital simple $C^*$-algebra satisfies the Dixmier property if and only if it admits at most one tracial state. In particular, weak centrality does not imply the Dixmier property. Recently, a complete generalization of the Haagerup-Zsid\'o theorem was obtained in \cite{ART} 
%(see  \cite[Theorem~2.6]{ART} for the additional necessary conditions). 

In \cite{Mag2}, Magajna gave a characterisation of weak centrality in terms of a more general averaging as follows. By a \emph{unital completely positive elementary operator} on $A$ we mean a map $\phi\colon A \to A$ of the form
\begin{equation}\label{eq:phirep}
\phi(x)=\sum_{i=1}^n a_i^*x a_i,
\end{equation}
where $n$ is a positive integer and $a_1,\ldots, a_n$ elements of $A$ such that $\sum_{i=1}^n a_i^*a_i=1$. The set of all such maps on $A$ is denoted by $\EU(A)$. Given $a \in A$, we define the \emph{Magajna set} $M_A(a)$ as the norm-closure of the set $\{\phi(a) : \phi \in \EU(A)\}$ (i.e. the closed $C^*$-convex hull of $a$ in the sense of \cite{Mag1}). Magajna showed in \cite[Theorem~1.1]{Mag2} that $A$ is weakly central if and only if for any $a \in A$, $M_A(a)\cap Z(A) \neq \varnothing$.

%It was shown by Haagerup and Zsid\'o in \cite{HZ} that a unital simple $C^*$-algebra satisfies the Dixmier property if and only if it admits at most one tracial state. In particular, weak centrality does not imply the Dixmier property. Recently, a complete generalization of the Haagerup-Zsid\'o theorem was obtained in \cite{ART}:
%
%\begin{theorem}[Theorem 2.6 of \cite{ART}] A unital $C^*$-algebra $A$ has the Dixmier property if and only if all of the following hold:
%\begin{itemize}
%\item[(i)] $A$ is weakly central.
%\item[(ii)] Every simple quotient of $A$ has at most one tracial state.
%\item[(iii)] Every extreme tracial state of $A$ factors through some simple quotient.
%\end{itemize}
%\end{theorem}

By a well-known result of Vesterstr\o m \cite{Vest} a unital $C^*$-algebra $A$ is weakly central if and only if it satisfies the centre-quotient property (the CQ-property), that is for any closed two-sided ideal $I$ of $A$, $(Z(A)+I)/I=Z(A/I)$.  
%In \cite{AG}, the authors extended the notion of weak centrality to non-unital $C^*$-algebras (\cite[Definition~3.5]{AG}), obtained a generalization of Vesterstr\o m's theorem (\cite[Theorem~3.16]{AG}) to non-unital case and showed that any $C^*$-algebra $A$ contains a largest weakly central ideal $J_{wc}(A)$. 
Motivated by results in \cite{BG}, where the setting is purely algebraic, the first two authors  defined in  \cite{AG} a local version of the CQ-property as follows. An element $a\in A$ is called a \emph{CQ-element} if for every closed two-sided ideal $I$ of $A$, $a+I \in Z(A/I)$ implies $a \in Z(A)+I$ (\cite[Definition~4.1]{AG}). In \cite[Theorem~4.8]{AG}, a description of the set $\cq(A)$ of all CQ-elements of $A$ was obtained  in terms of those maximal ideals of $A$ which witness the failure of the weak centrality of $A$. Further, it was shown that $\cq(A)$ contains all commutators and products by quasinilpotent elements. However, from both the algebraic and topological viewpoint the set $\cq(A)$ is in general rather strange. It always contains $Z(A)+J_{wc}(A)$ (where $J_{wc}(A)$ is the largest weakly central ideal of $A$, see \cite[Theorem~3.22]{AG}), with equality if and only if $A/J_{wc}(A)$ is abelian. Otherwise it fails dramatically to be a $C^*$-subalgebra of $A$: it is not norm-closed and it is neither closed under addition nor closed under multiplication.

In this paper we study local versions of the Dixmier property and  of weak centrality, the latter motivated by Magajna's characterization in terms of EUCP maps. More precisely, 
given an element $a\in A$, we call $a$ a Magajna element if $M_A(a)\cap Z(A)\neq \varnothing$, and we call it a Dixmier element if $D_A(a)\cap Z(A)\neq \varnothing$. We denote by $\mg(A)$ and $\dx(A)$ the sets of Magajna and Dixmier elements of $A$, respectively. That is, 
\begin{align*}
\mg(A) &:=\{a\in A: M_A(a)\cap Z(A)\neq \varnothing\},\\
\dx(A)  &:= \{a\in A: D_A(a) \cap Z(A)\neq \varnothing\}.
\end{align*}
Obviously, $A$ has the Dixmier property if and only if $\dx(A)=A$, and $A$ is weakly central if and only if $\mg(A)=A$. 
Quasinilpotent elements and self-commutators  are Dixmier (and Magajna) elements. However, a complete description of 
$\dx(A)$ and $\mg(A)$ is in general difficult to obtain. This has led us to also consider the sets
\begin{align*}
\mgbar(A) &=\{a\in A: \ds(M_A(a),Z(A))=0\},\\
\dxbar(A) &= \{a\in A: \ds(D_A(a),Z(A))=0\}.
\end{align*}
These are more tractable sets. In  our two main results, Theorems \ref{thm:sachar} and \ref{thm:condDix}, we characterize membership of a given element in the sets  $\mgbar(A)$ and $\dxbar(A)$ through several ``spectral conditions'', involving numerical ranges and tracial states. Moreover, we show that on selfadjoint elements these spectral conditions also characterize membership in $\mg(A)$ and $\dx(A)$, so that $\mgbar(A)\cap A_{sa}=\mg(A)\cap A_{sa}$ and $\dxbar(A)\cap A_{sa}=\dx(A)\cap A_{sa}$. In general, one has  the inclusions
\[
\xymatrix@R-1pc@C-1pc{
&\dxbar(A) \ar@{}[rd]|-*[@]{\subseteq}&& \\
\dx(A)\ar@{}[ru]|-*[@]{\subseteq}\ar@{}[rd]|-*[@]{\subseteq}&&\mgbar(A)\ar@{}[r]|-*[@]{\subseteq}&\cq(A).\\
&\mg(A)\ar@{}[ru]|-*[@]{\subseteq}&&
}
\]
All these inclusions  may be proper.  A question left unanswered in our investigations is whether the norm closures of $\mg(A)$ and $\dx(A)$ agree with $\mgbar(A)$ and $\dxbar(A)$, respectively (the latter are always closed sets).

If  $A=\mg(A)$, i.e., $A$ is weakly central, then obviously $\mg(A)$ carries a $C^*$-algebra structure. It is thus natural to ask whether $\mg(A)$ can be in general closed under addition or multiplication. As it turns out, imposing either one of these conditions on $\mg(A)$ implies that $A$ is not  far from being weakly central. We show that if  
$\mg(A)$ is either closed under addition or multiplication, then $A/J_{wc}(A)$ is abelian  (Theorem \ref{thm:magclosed}). In the case of $\dx(A)$, the situation is somewhat different: For a large class of unital $C^*$-algebras, which includes those that have a tracial state on every simple quotient, $\dx(A)=\dxbar(A)=Z(A)+\overline{[A,A]}$, and in particular, $\dx(A)$ is closed under addition (Theorem \ref{thm:dixadd}).  On the other hand, if $\dx(A)$ is closed under multiplication, $A$ is not too far from having the Dixmier property, in the sense that $A/J_{dp}(A)$ is abelian, where $J_{dp}(A)$ is the largest ideal of $A$ with the Dixmier property (Theorem \ref{thm:dixmult}).

Although $\mg(A)$ and $\dx(A)$ are not necessarily closed under addition, these sets are always pervasive in the sense that $\overline{\spn(\mg(A))}=Z(A)+\I([A,A])$ (Corollary \ref{cor:spnmag}) and $\overline{\spn(\dx(A))}=Z(A)+\overline{[A,A]}$ (Lemma \ref{zplusc}).

As well as using earlier results on the Dixmier property and weak centrality \cite{AG,ART,Mag1,Mag2,Vest}, the methods of this paper rely on the Dauns-Hofmann theorem \cite{RW}, Michael's selection theorem \cite{Mic} and results on commutators, square zero elements and their lifts \cite{AckPed,marcoux,OP,Pop,RLie}.

\section{Preliminaries}\label{sec:prel}

Let $A$ be a unital $C^*$-algebra. Let $Z(A)$ denote its centre. By an ideal of $A$ we shall always mean a closed two-sided ideal. For any subset $X$ of $A$, we denote by $\I(X)$  the ideal of $A$ generated by $X$. It is well-known that if $I$ is an ideal of $A$, then  $Z(I)=I \cap Z(A)$. As usual, if $x,y \in A$, then $[x,y]$ stands for the commutator $xy-yx$. By $[A,A]$ we denote the linear span of all commutators in $A$. 

By $S(A)$ we denote the sets of all states on $A$. Given $a \in A$ we denote by $W_A(a)$ the (algebraic) numerical range of $a$, that is 
$$
W_A(a)=\{\omega(a) : \omega \in S(A)\}.
$$
It is well-known that $W_A(a)$ is a compact convex set that contains the spectrum of $a$. %, with equality if $a$ is normal \textcolor{red}{(!!!perhaps insert a reference!!!)}. 
We use frequently in what follows that if $I$ is an ideal of $A$, then $W_{A/I}(a+I)\subseteq W_A(a)$.

By $\P(A)$ and $\M(A)$ we respectively denote the sets of all primitive and all maximal ideals of $A$. As usual, we equip $\P(A)$ with the Jacobson topology. As $A$ is unital, both sets $\P(A)$ and $\M(A)$ are compact. For any ideal $I$ of $A$ we write 
$$\M^I(A):=\{M \in \M(A) :  I \subseteq M\}.$$
It is easy to check that the assignment $M \mapsto M/I$ defines a homeomorphism from the set $\M^I(A)$ onto the set $\M(A/I)$.

Let us recall some facts around the Dauns-Hofmann theorem and the complete regularization map (see \cite{AS} for further details).  
For all $P, Q \in \mathrm{Prim}(A)$, define $P \approx Q$ if $P \cap Z(A)= Q \cap Z(A)$. 
By the Dauns-Hofmann theorem \cite[Theorem~A.34]{RW}), there exists a $C^*$-algebra isomorphism $Z(A)\ni z \mapsto \widehat z\in C(\P(A))$ such that 
\[
z+P=\widehat{z}(P)1+P \quad (z\in Z(A),\, P\in \P(A)).
\]
Hence, for all $P,Q \in \P(A)$ we have
$P \approx Q$ if and only if $f(P)=f(Q)$ for all $f \in C(\P(A))$.
Note that $\approx$ is an equivalence relation on $\P(A)$ and the equivalence
classes are closed subsets of $\P(A)$. It follows there is one-to-one correspondence between the quotient set $\mathrm{Prim}(A)/\!\approx$
and a set of ideals of $A$ given by
$[P]_\approx \longleftrightarrow \bigcap [P]_\approx$,
where $[P]_\approx$ denotes the equivalence class of $P\in \P(A)$. The set of ideals obtained in this way is denoted by $\G(A)$, and its
elements are called \textit{Glimm ideals} of $A$. The quotient map $\mathrm{Prim}(A) \to
\mathrm{Glimm}(A)$ given by
\[
P \mapsto \bigcap [P]_\approx
\]
 is known as the \textit{complete regularization map}. 
We equip $\G(A)$ with the quotient topology, which coincides with the complete regularization topology, since $A$ is unital. In this way $\G(A)$ becomes a compact Hausdorff space. In fact, $\G(A)$ is homeomorphic to $\M(Z(A))$ via the assignment 
$\G(A)\ni N \mapsto N \cap Z(A)\in \M(Z(A))$, whose inverse is given by $\M(Z(A))\ni J \mapsto JA \in \G(A)$ ($JA$ is closed by the Hewitt-Cohen factorization theorem).

For $z \in A$ we also write $\widehat{z}$ for the function in $C(\G(A))$ such that 
\[
z+N=\widehat{z}(N)1+N\quad (N\in \G(A)).
\]
Thus, the assignment $Z(A)\ni z \mapsto \widehat{z}\in C(\G(A))$ is a $C^*$-algebra isomorphism.

\section{The set $\mg(A)$}

%Throughout, $A$ is a unital $C^*$-algebra. We denote by $\P(A)$ the space of primitive ideals of $A$. 
%We denote by $\M(A)$ the set of maximal ideals of $A$.  Given a closed (two-sided) ideal $I$, we denote by $\M^I(A)$ the subset of $\M(A)$ of maximal ideals that contain $I$.

%Given $a \in A$ we denote by $W_A(a)$  the (algebraic) numerical range of $a$, that is 
%$$
%W_A(a)=\{\omega(a) :\, \omega \in \S(A)\}.
%$$
%It is well-known that $W_A(a)$ is a compact convex set that contains the spectrum of $a$. %, with equality if $a$ is normal \textcolor{red}{(!!!perhaps insert a reference!!!)}. 
%We use frequently in what follows that if $I$ is an ideal of $A$ then $W_{A/I}(a+I)\subseteq %W_A(a)$.

%We record the following simple observations.
%\begin{remark}\label{rem:Wsub}
%Let $a \in A$.  
%\begin{itemize}
%\item[(a)] For each $\phi\in \EU(A)$ we have $W_A(\phi(a))\subseteq W_A(a)$. This follows directly from the fact that $\omega \circ \phi \in \S(A)$ for any $\omega \in \S(A)$.
%\item[(b)] If $I$ is an ideal of $A$ then $W_{A/I}(a+I)\subseteq W_A(a)$.
%\end{itemize}
%\end{remark}

Recall that for $a\in A$ we define the Magajna set $M_A(a)$ of $a$ as the norm-closure of the set $\{\phi(a) :  \phi \in \EU(A)\}$. Further, we denote by  $\mg(A)$
the set of all $a\in A$ such that $M_A(a)\cap Z(A)\neq \varnothing$.

Our starting point in the investigation of the set $\mg(A)$ is the following theorem of Magajna.

\begin{theorem}[Theorem~4.1 of \cite{Mag1}]\label{thm:Mag4.1} Let $a \in A$. A  normal element $b \in A$ belongs to $M_A(a)$ if and only if $W_{A/P}(b+P)\subseteq W_{A/P}(a+P)$ for each $P\in \P(A)$. 
\end{theorem}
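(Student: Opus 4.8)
The plan is to prove the two implications separately; only the sufficiency direction will use that $b$ is normal.

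\textbf{Necessity.} Suppose $b\in M_A(a)$, say $b=\lim_k\phi_k(a)$ with $\phi_k\in\EU(A)$. Fix $P\in\P(A)$. Since each $\phi\in\EU(A)$ has the form $\sum_i a_i^*(\cdot)a_i$ with $\sum_i a_i^*a_i=1$, it maps $P$ into $P$ and so induces a map $\bar\phi\in\EU(A/P)$; for any state $\omega$ of $A/P$ the functional $\omega\circ\bar\phi$ is again a state of $A/P$, so $\omega(\phi(a)+P)=(\omega\circ\bar\phi)(a+P)\in W_{A/P}(a+P)$. Hence $W_{A/P}(\phi(a)+P)\subseteq W_{A/P}(a+P)$ for every $\phi\in\EU(A)$, and since $W_{A/P}(a+P)$ is compact, for any state $\omega$ we get $\omega(b+P)=\lim_k\omega(\phi_k(a)+P)\in W_{A/P}(a+P)$; thus $W_{A/P}(b+P)\subseteq W_{A/P}(a+P)$.

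\textbf{Sufficiency.} Assume $b$ is normal and $W_{A/P}(b+P)\subseteq W_{A/P}(a+P)$ for all $P\in\P(A)$; as $M_A(a)$ is closed it suffices, given $\epsilon>0$, to find $\phi\in\EU(A)$ with $\|\phi(a)-b\|<\epsilon$. First I would use normality: identifying $C^*(b,1)$ with $C(\sigma(b))$, choose a finite partition of unity $f_1,\dots,f_m\in C(\sigma(b))$ with each $\mathrm{supp}\,f_j$ of diameter $<\epsilon$, together with points $\lambda_j\in\mathrm{supp}\,f_j$; putting $e_j=f_j(b)^{1/2}\in A$ one has $\sum_j e_j^2=1$ and $\|b-\sum_j\lambda_j e_j^2\|\le\epsilon$. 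If for each $j$ one can produce $\psi_j\in\EU(A)$ with $\psi_j(a)$ close to $\lambda_j 1$ \emph{on the portion of the spectrum seen by $e_j$}, then $\phi(x):=\sum_j e_j\psi_j(x)e_j$ lies in $\EU(A)$ (one checks $\phi(1)=\sum_j e_j^2=1$), and a fibrewise estimate (positivity of $\psi_j(a)-\lambda_j 1$'s real/imaginary parts against $e_j^2$, summed) gives $\|\phi(a)-b\|=O(\epsilon)$.

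The delicate point is that one cannot simply require $\|\psi_j(a)-\lambda_j 1\|<\epsilon$: this would force $\lambda_j\in\bigcap_{P\in\P(A)}W_{A/P}(a+P)$, which routinely fails (e.g. for abelian $A$, where $\EU(A)=\{\mathrm{id}\}$ and $M_A(a)=\{a\}$). What the hypothesis does supply, via $\sigma_{A/P}(b+P)\subseteq W_{A/P}(b+P)\subseteq W_{A/P}(a+P)$, is that $\mathrm{dist}(\lambda_j,W_{A/P}(a+P))<\epsilon$ for every $P$ in the open set $V_j:=\{P\in\P(A):e_j\notin P\}$ — and that is exactly enough, because modulo any $P\notin V_j$ the term $e_j\psi_j(a)e_j$ vanishes. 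So I would isolate a \emph{local scalar averaging lemma}: if $\mu\in W_{A/P}(c+P)$ for every $P$ in a closed set $F\subseteq\P(A)$, then $\mu 1$ lies in the closure of $\{\phi(c):\phi\in\EU(A)\}$ modulo the ideal $\bigcap_{P\in F}P$; applying it to $c=a$ — after using upper semicontinuity of $P\mapsto W_{A/P}(a+P)$ and compactness of $\overline{V_j}$ to cover $\overline{V_j}$ by finitely many closed pieces on each of which a \emph{single} scalar within $\epsilon$ of $\lambda_j$ works, then patching these with a further subordinate $C^*$-partition of unity — would yield the required $\psi_j$.

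I expect this scalar averaging lemma to be the main obstacle: it is a genuine ``global from local'' statement, asserting that a scalar lying in all the relevant fibre numerical ranges of $c$ can actually be reached from $c$ by completely positive elementary averaging. It can be attacked either through a Hahn--Banach/bipolar separation principle for $C^*$-convex sets, or through an iterative averaging in which, at each step, a state realising the distance from $\mu$ to $W_{A/P}(c+P)$ is used to manufacture an elementary operator that squeezes the fibre spectra towards $\mu$; keeping all auxiliary elements inside $A$ rather than its bidual, and making the squeezing uniform over $\overline{V_j}$, is the technically demanding part. By comparison, the reduction of general normal $b$ to this scalar case via the partition of unity above, and the bound $\|\phi(a)-b\|=O(\epsilon)$, are routine.
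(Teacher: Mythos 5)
This statement is quoted from Magajna \cite{Mag1}; the present paper cites it without proof, so there is no ``paper's own proof'' to compare against. Evaluating your attempt on its own terms: the necessity direction is complete and correct (operators in $\EU(A)$ descend to $\EU(A/P)$, numerical ranges are preserved by UCP maps, and closedness of $W_{A/P}(a+P)$ handles the limit). The sufficiency direction, however, is not a proof. The reduction via the spectral partition of unity $\{f_j\}$ and the observation $\mathrm{dist}(\lambda_j, W_{A/P}(a+P))<\epsilon$ for $P\in V_j=\{P:e_j\notin P\}$ is sound, and the computation showing $\phi:=\sum_j e_j\psi_j(\cdot)e_j\in\EU(A)$ with $\|e_j(\psi_j(a)-\lambda_j 1)e_j+P\|$ controlled on $V_j$ and vanishing off $V_j$ is correct. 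But the entire content is then pushed into the unproved ``local scalar averaging lemma,'' which you yourself identify as the main obstacle. That lemma, for a closed set $F$ with $I=\bigcap_{P\in F}P$, is literally the statement $\mu 1+I\in M_{A/I}(a+I)$, i.e.\ the theorem in the scalar case applied to $A/I$. So the argument is a reduction from normal $b$ to scalar $b$, not a proof; the scalar case is exactly where Magajna's proof does its real work (passing to $A^{**}$, a Hahn--Banach separation for $C^*$-convex sets, and a Kaplansky-density descent back to $A$), and none of that appears here.

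There are also two secondary gaps worth flagging. First, you invoke \emph{upper} semicontinuity of $P\mapsto W_{A/P}(a+P)$ on $\P(A)$; this fails in general (the norm functions $P\mapsto\|x+P\|$ on $\P(A)$ are only lower semicontinuous, and correspondingly the numerical-range map is only l.s.c., as in \cite[Lemma~4.7]{ART}). Second, your scalar lemma needs $\mu\in W_{A/P}(a+P)$ for all $P$ in a \emph{closed} set, but the estimate $\mathrm{dist}(\lambda_j,W_{A/P}(a+P))<\epsilon$ is only established on the open set $V_j$, and the ideal $\bigcap_{P\in V_j}P$ has hull $\overline{V_j}$, so the hypothesis must be verified on $\overline{V_j}\setminus V_j$ as well; neither lower semicontinuity of the numerical-range map nor the vanishing of $e_j+P$ at such $P$ automatically supplies this, so the ``patching by a further $C^*$-partition of unity'' step is not justified as written. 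In short: correct and complete on necessity; a plausible reduction but with the central lemma missing and two genuine technical holes on sufficiency.
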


%Let us denote by $\G(A)$ the space of Glimm ideals of $A$.  Recall that (by the Dauns-Hofmann %theorem \cite[Theorem~A.34]{RW}) there exists an isomorphism $Z(A)\ni z \to \widehat z\in C(\G(A))$ %such that 
%\[
%z+N=\widehat{z}(N)1+N
%\]
%for all $z \in Z(A)$ and all $N \in \G(A)$.

Given $a \in A$, let us define the set-valued function $\Psi_a \colon \G(A) \to 2^{\C}$ by 
$$
\Psi_a(N):=\bigcap_{M \in \M^N(A)} W_{A/M}(a+M),
$$
for all $N\in \G(A)$. From Theorem \ref{thm:Mag4.1} we deduce the following proposition:

\begin{proposition}\label{magajnacor}
%Let $A$ be a unital C*-algebra. 
Let $a\in A$ and	$z\in Z(A)$. Then $z\in M_A(a)$ if and only if 
	$\widehat z(N)\in \Psi_a(N)$ for all $N\in \G(A)$, i.e., $ \widehat z$ is a continuous selection of the set-valued map $\Psi_a$.
\end{proposition}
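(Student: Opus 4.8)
The plan is to read off this proposition from Magajna's Theorem~\ref{thm:Mag4.1}, specialised to a central (hence normal) element, and then to re-organise the resulting ``spectral condition'' by Glimm ideals.

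First I would note that $z\in Z(A)$ is normal, since $z^*$ is again central and therefore commutes with $z$. Thus Theorem~\ref{thm:Mag4.1} applies and gives: $z\in M_A(a)$ if and only if $W_{A/P}(z+P)\subseteq W_{A/P}(a+P)$ for every $P\in\P(A)$. By the Dauns--Hofmann theorem we have $z+P=\widehat z(P)1+P$, so $A/P$ regards $z$ as the scalar $\widehat z(P)$ and $W_{A/P}(z+P)=\{\widehat z(P)\}$. Hence $z\in M_A(a)$ if and only if $\widehat z(P)\in W_{A/P}(a+P)$ for all $P\in\P(A)$.

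The next step, which I expect to be the only non-routine point, is to replace the quantifier over primitive ideals by one over maximal ideals. The key observation is that $P\cap Z(A)$ is \emph{already} a maximal ideal of $Z(A)$ for every $P\in\P(A)$: an irreducible representation with kernel $P$ restricts on $Z(A)$ to a character (by Schur's lemma the centre is mapped into the scalars, and $1\mapsto 1$), whose kernel is precisely $P\cap Z(A)$. Consequently, choosing a maximal ideal $M\in\M(A)$ with $P\subseteq M$ (possible since $A/P$ is unital), the chain $P\cap Z(A)\subseteq M\cap Z(A)\subsetneq Z(A)$ forces $P\cap Z(A)=M\cap Z(A)$, i.e.\ $P\approx M$, and therefore $\widehat z(P)=\widehat z(M)$ because values of functions in $C(\P(A))$ depend only on the $\approx$-class. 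Moreover $M/P$ is an ideal of $A/P$ with $(A/P)/(M/P)\cong A/M$, so $W_{A/M}(a+M)\subseteq W_{A/P}(a+P)$. Combined with the trivial inclusion $\M(A)\subseteq\P(A)$ (maximal ideals are primitive since their quotients are simple and admit a faithful irreducible representation) together with the same two facts, this shows that the condition ``$\widehat z(P)\in W_{A/P}(a+P)$ for all $P\in\P(A)$'' is equivalent to ``$\widehat z(M)\in W_{A/M}(a+M)$ for all $M\in\M(A)$''.

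Finally I would sort maximal ideals by the Glimm ideal they contain. Each $M\in\M(A)$ contains exactly one Glimm ideal $N$, namely the one with $N\cap Z(A)=M\cap Z(A)$ (again using that $N\cap Z(A)$ is maximal in $Z(A)$ and the homeomorphism $\G(A)\cong\M(Z(A))$), and for this $N$ one has $\widehat z(M)=\widehat z(N)$ since $z+M=\widehat z(N)1+M$ and $1\notin M$; conversely every $M\in\M^N(A)$ arises this way, so $\M(A)$ is partitioned into the sets $\M^N(A)$, $N\in\G(A)$. Hence ``$\widehat z(M)\in W_{A/M}(a+M)$ for all $M\in\M(A)$'' is the same as ``for every $N\in\G(A)$ and every $M\in\M^N(A)$, $\widehat z(N)\in W_{A/M}(a+M)$'', i.e.\ ``$\widehat z(N)\in\bigcap_{M\in\M^N(A)}W_{A/M}(a+M)=\Psi_a(N)$ for all $N\in\G(A)$''. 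Since $\widehat z\in C(\G(A))$ by the Dauns--Hofmann isomorphism, this last statement is precisely the assertion that $\widehat z$ is a continuous selection of $\Psi_a$, which finishes the proof.
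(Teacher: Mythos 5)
Your proposal is correct and follows essentially the same route as the paper: both deduce the statement from Magajna's Theorem~\ref{thm:Mag4.1} by noting that $W_{A/P}(z+P)=\{\widehat z(P)\}$ for a central $z$, passing between primitive and maximal ideals via the inclusions $W_{A/M}(a+M)\subseteq W_{A/P}(a+P)$ for $P\subseteq M$, and organizing the resulting conditions by the Glimm ideal contained in each maximal ideal. The only cosmetic difference is that you isolate the ``primitive versus maximal'' reduction as an explicit intermediate equivalence, whereas the paper folds it into a single chain of inclusions.
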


\begin{proof}
Suppose that $\widehat z(N)\in \Psi_a(N)$ for all $N\in \G(A)$, i.e., $\widehat z(N) \in W_{A/M}(a+M)$ for all $N\in \G(A)$ and all $M \in \M^N(A)$. Let  $P\in \P(A)$. Let $N\in \G(A)$ be such that $N\subseteq P$, namely $N=(P\cap Z(A))A$. Since $z+P=\widehat z(N)1+P$, we have that $W_{A/P}(z+P)=\{\widehat z(N)\}$. Choose any $M\in \M(A)$ such that $P\subseteq M$. Then, 
\[
W_{A/P}(z+P)=\{\widehat z(N)\}\subseteq W_{A/M}(a+M) \subseteq W_{A/P}(a+P),
\]
where we have used that $A/M$ is a quotient of $A/P$ in the last inclusion.
It follows from Theorem \ref{thm:Mag4.1} that $z\in M_A(a)$. Conversely, if $z\in M_A(a)$, then again by 
Theorem \ref{thm:Mag4.1} we have $W_{A/P}(z+P)\subseteq W_{A/P}(a+P)$ for all $P\in \P(A)$. Applying this to maximal ideals we get that 
\[
\{\widehat z(N)\}=W_{A/M}(z+M)\subseteq W_{A/M}(a+M)
\] 
for all $N\in \G(A)$ and $M\in \M^N(A)$, as desired.
\end{proof}

\begin{theorem}\label{thm:sachar}
For an element $a \in A$ consider the following conditions:
\begin{itemize}
\item[(i)] $a \in \mg(A)$.
\item[(ii)] $\mathrm{dist}(M_A(a),Z(A))=0$.
\item[(iii)] $\Psi_a(N)\neq \varnothing$ for all $N \in \G(A)$. 
\end{itemize}
Then (i) $\Longrightarrow$ (ii) $\Longleftrightarrow$ (iii). If $a$ is selfadjoint then  (i), (ii) and (iii) are equivalent.
\end{theorem}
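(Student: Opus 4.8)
The plan is to establish the implications in the order (i) $\Rightarrow$ (ii), then (ii) $\Leftrightarrow$ (iii), and finally, under the hypothesis that $a = a^*$, to close the loop with (iii) $\Rightarrow$ (i).

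The implication (i) $\Rightarrow$ (ii) is immediate, since $z \in M_A(a) \cap Z(A)$ gives $\ds(M_A(a), Z(A)) = 0$. For (ii) $\Rightarrow$ (iii), I would argue contrapositively: suppose $\Psi_a(N_0) = \varnothing$ for some Glimm ideal $N_0$. By compactness of $\M^{N_0}(A)$ and the fact that the sets $W_{A/M}(a+M)$ are compact and convex and vary "upper semicontinuously" enough (this should follow from the identification $\M^{N_0}(A) \cong \M(A/N_0)$ and the continuity of $M \mapsto \|a + M\|$-type data — or more concretely, one can pass to $A/N_0$ and use that finitely many of the $W_{A/M}(a+M)$ already have empty intersection by the finite intersection property for compact convex sets in $\C \cong \R^2$, via Helly's theorem it suffices to take three of them), one finds $M_1, M_2, M_3 \in \M^{N_0}(A)$ with $\bigcap_{j} W_{A/M_j}(a+M_j) = \varnothing$. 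Then for any $z \in Z(A)$, the single value $\widehat z(N_0)$ cannot lie in all three $W_{A/M_j}(a+M_j)$, so some $M_j$ has $\widehat z(N_0) \notin W_{A/M_j}(a+M_j)$; since $z + M_j = \widehat z(N_0) 1 + M_j$ and $W_{A/M_j}(b + M_j) \subseteq W_A(b)$-type comparisons give a uniform lower bound $\ds(\widehat z(N_0), W_{A/M_j}(a+M_j)) \geq \delta > 0$ independent of $z$, one gets $\|\phi(a) - z\| \geq \delta$ for every $\phi \in \EU(A)$ — here one uses that $\phi(a) + M_j \in M_{A/M_j}(a + M_j)$ has numerical range contained in $W_{A/M_j}(a+M_j)$ together with a standard inequality bounding $\|x\|$ below by $\ds(\lambda, W(x))$ applied in $A/M_j$. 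Hence $\ds(M_A(a), Z(A)) \geq \delta > 0$, contradicting (ii).

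For (iii) $\Rightarrow$ (ii): assuming $\Psi_a(N) \neq \varnothing$ for all $N$, I want a central element close to $M_A(a)$. The natural tool is Michael's selection theorem applied to $\Psi_a$, exactly as foreshadowed in Proposition \ref{magajnacor}: one checks $\Psi_a$ is a lower semicontinuous map from the compact Hausdorff space $\G(A)$ into closed convex subsets of $\C$, whence it admits a continuous selection $g \in C(\G(A))$. Then $g = \widehat z$ for a unique $z \in Z(A)$ via the Dauns–Hofmann isomorphism $Z(A) \cong C(\G(A))$, and Proposition \ref{magajnacor} gives $z \in M_A(a)$, which is even stronger than (ii). The delicate point is the lower semicontinuity (and closed-valuedness) of $\Psi_a$; if upper semicontinuity is what is readily available instead, one instead argues that $\Psi_a$ is nonempty-compact-convex-valued and upper semicontinuous, and then for each $\varepsilon > 0$ the "$\varepsilon$-fattened" map $N \mapsto \{\zeta : \ds(\zeta, \Psi_a(N)) < \varepsilon\}$ is lower semicontinuous with open convex values, so Michael's theorem produces a continuous selection $g_\varepsilon$, hence $z_\varepsilon \in Z(A)$ with $\widehat{z_\varepsilon}(N)$ within $\varepsilon$ of $\Psi_a(N)$ for all $N$; tracking $\varepsilon \to 0$ and using Theorem \ref{thm:Mag4.1} (approximately) yields $\ds(M_A(a), Z(A)) = 0$. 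Establishing the correct semicontinuity property of $\Psi_a$ — which comes down to how $W_{A/M}(a+M)$ behaves as $M$ ranges over $\M^N(A)$ and $N$ varies, and ultimately to continuity properties of the map $(\lambda, M) \mapsto \ds(\lambda, W_{A/M}(a+M))$ coming from the continuity of $M \mapsto \|(a-\lambda) + M\|$ and its real/imaginary-part variants — is the main obstacle, and is where the bulk of the work lies.

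Finally, for the selfadjoint case I would prove (iii) $\Rightarrow$ (i): when $a = a^*$, each $W_{A/M}(a+M)$ is a compact real interval, so $\Psi_a(N)$ is an intersection of compact intervals, hence (being nonempty by (iii)) itself a nonempty compact interval. The map $N \mapsto \Psi_a(N)$ into closed intervals of $\R$ is then lower semicontinuous — for real intervals the semicontinuity issues are much milder, since one can track the endpoints, which are respectively upper and lower semicontinuous functions of $N$, so the interval-valued map is l.s.c.\ — and Michael's selection theorem yields a genuine continuous selection $g = \widehat z \in C(\G(A))$ with $\widehat z(N) \in \Psi_a(N)$ for all $N$. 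By Proposition \ref{magajnacor}, $z \in M_A(a) \cap Z(A)$, so $a \in \mg(A)$, giving (i) and completing the equivalence of (i), (ii), (iii) in the selfadjoint case.
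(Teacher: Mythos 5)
Your handling of (i)$\Rightarrow$(ii) is the same as the paper's, and your selfadjoint (iii)$\Rightarrow$(i) argument (endpoint functions of the interval $\Psi_a(N)$ are respectively u.s.c.\ and l.s.c., hence the interval-valued map is l.s.c., then Michael plus Proposition \ref{magajnacor}) matches the paper's proof via Lemma \ref{lem:lsc}(ii) exactly. Your (ii)$\Rightarrow$(iii) is a correct alternative: you argue contrapositively via Helly's theorem and a uniform separation constant $\delta$, whereas the paper argues directly, extracting a convergent subsequence of the values $\widehat{z}_n(N)$ and using that each $W_{A/M}(a+M)$ is closed. Both work.

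The genuine gap is in (iii)$\Rightarrow$(ii). For non-selfadjoint $a$, the set-valued map $\Psi_a$ is in general neither lower nor upper semicontinuous, and your hedged fall-back — apply Michael to the $\varepsilon$-fattened map $N\mapsto\{\zeta:\ds(\zeta,\Psi_a(N))<\varepsilon\}$ — also fails, because that fattening is not lower semicontinuous either. The paper's own Example \ref{ex:mag3isweaker} already illustrates this: there one has $\Psi_c(N_t)=\{-1\}$ for $t<0$, $\Psi_c(N_0)=[-1,1]$, and $\Psi_c(N_t)=\{1\}$ for $t>0$; for $\varepsilon<1$ the point $0$ lies in the fattened set at $N_0$ but has distance close to $1$ from $\Psi_c(N_t)$ for $t$ slightly negative, so no nearby point lies in the fattened set at $N_t$. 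The correct object — the paper's $\Psi_{a,r}$ — fattens each $W_{A/M}(a+M)$ \emph{before} intersecting; this yields a genuinely larger set than the fattening you propose (since $\bigl(\bigcap_M C_M\bigr)^{\leq\delta}\subseteq\bigcap_M (C_M)^{\leq\delta}$ and this inclusion can be very strict), and its lower semicontinuity (Lemma \ref{lem:lscWNr}) is a nontrivial fact whose proof uses Helly's theorem together with the lower semicontinuity of $M\mapsto W_{A/M}(a+M)$. Moreover, even once a continuous selection $\widehat z$ of $\Psi_{a,r}$ is obtained, passing from the pointwise bounds $\ds(W_{A/M}(a+M),\widehat z(N))\leq r$ to $\ds(M_A(a),z)\leq r$ requires the quantitative Lemma \ref{lem:dsMag}, not just Theorem \ref{thm:Mag4.1} "approximately"; that lemma is itself a substantial step, proved by passing to the bidual $A^{**}$ and running Michael's selection theorem in that von Neumann algebra. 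These are precisely the pieces your sketch identifies as "where the bulk of the work lies," and they are not routine.
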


Before proving  Theorem \ref{thm:sachar} we establish some preliminary facts. First recall that if $X$ and $Y$ are topological spaces, a set-valued function $\Psi \colon X \to 2^Y$ is said to be \emph{lower semicontinuous} (l.s.c.) if for every open set $U\subseteq Y$, the set
\[
\{x \in X :  \Psi(x)\cap U \neq \varnothing\}
\]
is open in $X$.

\begin{lemma}\label{lem:lsc}
Let $a\in A$. 
\begin{itemize}
\item[{\rm (i)}]
The function $f_a \colon \G(A)\to [0, \infty)$ defined by 
\[
f_a(N):=\inf\{\|a+M\| :\ M \in \M^N(A)\}.
\]
is l.s.c.
\item[\rm{(ii)}]
If $a$ is selfadjoint, then the set-valued function $\Psi_a$ is l.s.c.
\end{itemize}
\end{lemma}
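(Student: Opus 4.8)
The plan is to handle the two parts essentially by the same mechanism: both $f_a$ and $\Psi_a$ are built from the maps $M\mapsto \|a+M\|$ and $M\mapsto W_{A/M}(a+M)$ on $\M(A)$, and the passage to $\G(A)$ is a quotient by the complete regularization map, so the key point is an ``openness'' property of the quotient combined with the known (upper) semicontinuity of the norm function and the numerical range on the primitive/maximal ideal space. For part (i), I would first recall that the function $M\mapsto\|a+M\|$ is lower semicontinuous on $\M(A)$ (equivalently on $\P(A)$): for any $\epsilon>0$ the set $\{M:\|a+M\|>\epsilon\}$ is open, since on quotients the norm can only drop in the limit. Then $f_a(N)=\inf\{\|a+M\|:M\in\M^N(A)\}$, and to see $\{N:f_a(N)>\epsilon\}$ is open in $\G(A)$, note $f_a(N)>\epsilon$ means $\|a+M\|>\epsilon$ for \emph{every} $M$ in the fibre $\M^N(A)$. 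Since the map $\M(A)\to\G(A)$, $M\mapsto$ (the Glimm ideal contained in $M$), is a continuous, closed, surjective map with compact fibres (its fibre over $N$ is exactly $\M^N(A)=\M(A/N)$, which is compact), the set $\{N: \text{the whole fibre lies in } \{\|a+M\|>\epsilon\}\}$ is the complement of the image of the closed set $\{M:\|a+M\|\le\epsilon\}$; that image is closed because the quotient map is closed, so $\{N:f_a(N)>\epsilon\}$ is open. This gives l.s.c. of $f_a$.

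For part (ii), I would argue analogously but with numerical ranges in place of norms. The starting point is that $a$ is selfadjoint, so $W_{A/M}(a+M)$ is a closed interval $[\alpha_M,\beta_M]\subseteq\R$, with $\alpha_M=\min\sigma(a+M)$ and $\beta_M=\max\sigma(a+M)$. I would verify that $M\mapsto\alpha_M$ is upper semicontinuous and $M\mapsto\beta_M$ is lower semicontinuous on $\M(A)$ (again because taking quotients only shrinks the spectrum, and the spectrum moves upper-semicontinuously). Hence for a fixed real open interval $U=(s,t)$ (it suffices to test l.s.c. against a basis of such intervals, together with half-lines, of $2^{\C}$ restricted to $\R$), the condition $W_{A/M}(a+M)\cap U\neq\varnothing$ is equivalent to $\alpha_M<t$ and $\beta_M>s$, which is an open condition on $M$. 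Now $\Psi_a(N)=\bigcap_{M\in\M^N(A)} [\alpha_M,\beta_M]=[\sup_{M}\alpha_M,\ \inf_M\beta_M]$ (possibly empty), so $\Psi_a(N)\cap U\neq\varnothing$ holds iff there is a point of $U$ above every $\alpha_M$ and below every $\beta_M$, i.e. iff $\sup_M\alpha_M<t$ and $\inf_M\beta_M>s$, equivalently $\alpha_M<t$ for all $M\in\M^N(A)$ and $\beta_M>s$ for all $M\in\M^N(A)$ \emph{and} the resulting interval is nonempty and meets $(s,t)$. The ``for all $M$ in the fibre'' statements are, exactly as in part (i), open conditions on $N$ by closedness of the quotient map $\M(A)\to\G(A)$; I would then combine the two to conclude $\{N:\Psi_a(N)\cap U\neq\varnothing\}$ is open, using compactness of the fibres to ensure the sup and inf are attained so that strict inequalities on the extremes transfer to strict inequalities on all of the fibre.

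I expect the main obstacle to be the bookkeeping around the quotient map $q\colon \M(A)\to\G(A)$: one needs that $q$ is continuous, surjective, \emph{closed}, and has compact fibres equal to $\M^N(A)$. Continuity and surjectivity are immediate from the definitions in Section \ref{sec:prel}; compactness of the fibres follows since $\M^N(A)\cong\M(A/N)$ and $A/N$ is unital hence $\M(A/N)$ is compact; closedness is the delicate point and I would deduce it from compactness of $\M(A)$ and Hausdorffness of $\G(A)$ (a continuous map from a compact space to a Hausdorff space is closed). The other point requiring care is the semicontinuity of $M\mapsto W_{A/M}(a+M)$ for selfadjoint $a$: rather than prove it from scratch I would phrase it via the endpoints $\alpha_M,\beta_M$ and the standard fact that $M\mapsto\|\,(a-\lambda)+M\|$ is l.s.c. for each scalar $\lambda$, since for selfadjoint $a$ the endpoints of the numerical range are recovered from norms of $(a\pm\|a\|)$ type expressions; alternatively one can invoke that $\|a+M\|=\max(|\alpha_M|,|\beta_M|)$ applied to suitable translates $a-\lambda$ to extract each endpoint separately. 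Once these ingredients are in place, both parts follow by the fibrewise ``open condition on the whole fibre $\Rightarrow$ open in the quotient'' principle.
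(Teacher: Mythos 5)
Your argument is correct and uses the same underlying facts as the paper's --- l.s.c.\ of $P\mapsto\|a+P\|$ on $\P(A)$, compactness of $\M(A)$, continuity of the complete regularisation map, Hausdorffness of $\G(A)$, and $W_{A/M}(a+M)=[\min\sp(a+M),\max\sp(a+M)]$ for $a=a^*$ --- but organises them differently. For (i) the paper runs a direct net argument: pick near-minimisers $M_\alpha$ in the fibres $\M^{N_\alpha}(A)$ over a net $N_\alpha\to N_0$ in $\{f_a\leq s\}$, extract a convergent subnet with limit $M_0\supseteq N_0$, and apply l.s.c.\ of the quotient norm. You instead encode the same content as the statement that $q\colon \M(A)\to\G(A)$ is a closed map (continuous, compact domain, Hausdorff range), so that the set of $N$ whose whole fibre lies in a given open subset of $\M(A)$ is open; your packaging is cleaner and is reusable in (ii). Two refinements are worth making. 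First, identifying $\{N:f_a(N)>\ep\}$ with $\{N:\M^N(A)\subseteq\{M:\|a+M\|>\ep\}\}$ already requires the infimum to be attained on the compact fibre (an l.s.c.\ function on a compact set attains its infimum); you invoke this only in (ii) but need it in (i) as well. Second, for (ii) the paper does not prove l.s.c.\ of $M\mapsto W_{A/M}(a+M)$ on $\M(A)$ at all: it writes $g(N)=f_{\|a\|1+a}(N)-\|a\|$ and $h(N)=\|a\|-f_{\|a\|1-a}(N)$ and reduces everything to (i) in one line, a shortcut worth adopting in place of your endpoint-by-endpoint analysis. Finally, a caveat shared by the paper's own proof: deducing l.s.c.\ of $N\mapsto[h(N),g(N)]$ from u.s.c.\ of $h$ and l.s.c.\ of $g$ fails in general when empty values are permitted (the interval can be empty throughout a punctured neighbourhood of a point where it is a singleton), and your quotient-map step handles only the ``all $\alpha_M<t$'' and ``all $\beta_M>s$'' clauses, not the ``the resulting interval is nonempty'' clause that you correctly list. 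This is harmless because the lemma is only ever applied under the hypothesis $\Psi_a(N)\neq\varnothing$ for all $N$ (as in the proof of Theorem \ref{thm:sachar}), but that hypothesis deserves to be stated.
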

\begin{proof}
(i) Let $s\geq 0$ and 
$$
C_s:= \{N \in \G(A):  f_a(N) \leq s\}.
$$ 
We claim that $C_s$ is closed in $\G(A)$. Indeed, suppose that $(N_\alpha)_\alpha$ is a net in $C_s$ converging to some $N_0 \in \G(A)$. Let $\ep >0$.
For each index $\alpha$ there is $M_\alpha \in \M^{N_{\alpha}}(A)$  such that 
\[
\|a+M_\alpha\| \leq s+\ep.
\] 
As $A$ is unital, $\M(A)$ is compact, so there is a subnet of $(M_\alpha)_\alpha$ convergent to some $M_0\in \M(A)$.
Then, by the continuity of the complete regularization map from $\P(A)$ to $\G(A)$, and the Hausdorffness of $\G(A)$, we conclude that $M_0$ contains $N_0$. By the lower semi-continuity of the norm functions $P \mapsto \|a+P\|$ on $\P(A)$ (see e.g. \cite[Proposition~II.6.5.6 (iii)]{Bla}) we get  $\|a+M_0\| \leq s+\ep$.
Hence $f_a(N_0) \leq s+\ep$. Since $\ep>0$ was arbitrary, $f_a(N_0) \leq s$, as required.

(ii) Given a $C^*$-algebra element $b$, let us denote by $\sp(b)$ its spectrum. Now let $a\in A$ be selfadjoint.  Define two functions $g,h \colon \G(A) \to \R$ by
\begin{align*}
g(N) &:=\inf \{\max \sp(a+M) :  M \in \M^N(A)\},\\
h(N) &:=\sup \{\min \sp(a+M) :  M \in \M^N(A)\}.
\end{align*}
As $\max \sp(a+M)=\|(\|a\|\cdot 1+a)+M\| - \|a\|$, we have that
$$
g(N)=f_{\|a\|1+a}(N)-\|a\|.
$$
 Similarly, $\min \sp(a+M)=\|a\|-\|(\|a\|1-a)+M\|$, from which we deduce that
\begin{align*}
h(N)=-f_{\|a\|1-a}(N)+\|a\|.
\end{align*}
Thus, by part (i), $g$ is lower semi-continuous and $h$ is upper semi-continuous. Since 
the numerical range of a selfadjoint element is the convex hull of its spectrum, 
\[
W_{A/M}(a+M)=[\min \sp(a+M),\max \sp(a+M)]
\]
for all $M\in \M(A)$. We deduce at once that 
\[
\Psi_a(N)=[h(N),g(N)],
\]
where $\Psi_a(N)=\varnothing$ if $h(N)>g(N)$. It follows that $\Psi_a$ is a l.s.c. set-valued function.
\end{proof}

Given a set $C\subseteq \C$ and $\delta>0$, let us denote by $C^{\leq \delta}$ the set of complex numbers whose distance to $C$ is $\leq \delta$. For $a\in A$, $N\in \G(A)$, and $\delta>0$, let us define
\[
W_{N,\delta}(a):=\bigcap_{M\in \mathrm{Max}^N(A)} \Big( W_{A/M}(a+M) \Big)^{\leq\delta}.  
\]	
Now fix $r>0$ and define $\Psi_{a,r}\colon \G(A)\to 2^{\C}$ by
\begin{equation}\label{eq:WNr}
\Psi_{a,r}(N):=\bigcup_{\delta<r} W_{N,\delta}(a).
\end{equation}

\begin{lemma}\label{lem:lscWNr}
Let $a\in A$. For each $r>0$ the map $\Psi_{a,r}$
is lower semi-continuous.
\end{lemma}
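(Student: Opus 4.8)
\textbf{Proof proposal for Lemma \ref{lem:lscWNr}.}

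The plan is to show that for every open set $U\subseteq \C$, the set $V_U:=\{N\in\G(A) : \Psi_{a,r}(N)\cap U\neq\varnothing\}$ is open, by proving that its complement is closed. First I would unwind the definitions: $\Psi_{a,r}(N)\cap U\neq\varnothing$ means there is some $\delta<r$ and some point $\zeta\in U$ with $\ds(\zeta, W_{A/M}(a+M))\le\delta$ for every $M\in\M^N(A)$. Since $U$ is open, after shrinking we may take $\zeta$ in $U$ with a small ball around it still in $U$, and since $\delta<r$ we may even assume $\delta$ is rational; the key reformulation is that $N\in V_U$ iff there exists a point $\zeta\in U$ and a radius $\delta<r$ such that the closed ball $\overline{B}(\zeta,\delta)$ meets $W_{A/M}(a+M)$ for all $M\in\M^N(A)$, equivalently $\ds\big(\zeta, W_{A/M}(a+M)\big)\le\delta$ for all such $M$, equivalently $\sup_{M\in\M^N(A)}\ds\big(\zeta,W_{A/M}(a+M)\big)\le \delta < r$. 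So $N\in V_U$ iff $\inf_{\zeta\in U}\sup_{M\in\M^N(A)}\ds\big(\zeta,W_{A/M}(a+M)\big)<r$.

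The heart of the argument is then an upper-semicontinuity statement: for each fixed $\zeta\in\C$, the function $N\mapsto \Phi_\zeta(N):=\sup_{M\in\M^N(A)}\ds\big(\zeta,W_{A/M}(a+M)\big)$ on $\G(A)$ is upper semicontinuous, i.e.\ $\{N : \Phi_\zeta(N)\le s\}$ is closed for every $s$. This is proved exactly as Lemma \ref{lem:lsc}(i): if $(N_\alpha)$ is a net in this set converging to $N_0$, take any $M_0\in\M^{N_0}(A)$; one wants $\ds(\zeta,W_{A/M_0}(a+M_0))\le s$. Here I would use that $\M^{N_0}(A)$ sits inside $\overline{\M^{N_\alpha}(A)}$ in an appropriate limiting sense — more precisely, by compactness of $\M(A)$ and continuity of the complete regularization map $\M(A)\to\G(A)$ together with Hausdorffness of $\G(A)$, any $M_0\in\M^{N_0}(A)$ is a limit of a subnet $(M_\beta)$ with $M_\beta\in\M^{N_\beta}(A)$ (this is where the unitality, hence compactness of $\M(A)$, is essential); then I invoke the upper semicontinuity of $P\mapsto\ds(\zeta,W_{A/P}(a+P))$ on $\P(A)$, which follows from lower semicontinuity of the norm functions $P\mapsto\|(a-\mu 1)+P\|$ (as used in Lemma \ref{lem:lsc}(i), via \cite[Proposition~II.6.5.6]{Bla}), since $\ds(\zeta,W_{A/P}(a+P))$ can be written as a sup over $\mu$ on the boundary of a suitable disc, or handled directly. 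Passing to the limit along $(M_\beta)$ gives $\ds(\zeta,W_{A/M_0}(a+M_0))\le s$, so $\Phi_\zeta(N_0)\le s$.

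Granting this, I finish as follows. Fix $N_0\notin V_U$, so $\Phi_\zeta(N_0)\ge r$ for all $\zeta\in U$; I want a neighbourhood of $N_0$ disjoint from $V_U$. Actually it is cleaner to argue $V_U$ is open directly: suppose $N_0\in V_U$, so there are $\zeta_0\in U$ and $\delta_0<r$ with $\Phi_{\zeta_0}(N_0)\le\delta_0$. Pick $\delta_0<\delta_1<r$. By upper semicontinuity of $\Phi_{\zeta_0}$, the set $\{N : \Phi_{\zeta_0}(N)<\delta_1\}$ is an open neighbourhood of $N_0$, and every $N$ in it has $\zeta_0\in U$ with $\ds(\zeta_0,W_{A/M}(a+M))<\delta_1<r$ for all $M\in\M^N(A)$, hence $\zeta_0\in W_{N,\delta_1}(a)\subseteq\Psi_{a,r}(N)$, so that $N\in V_U$. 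Therefore $V_U$ is open, and $\Psi_{a,r}$ is l.s.c.

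The main obstacle I anticipate is the limiting step for maximal ideals: carefully justifying that, for a net $N_\alpha\to N_0$ in $\G(A)$ and a chosen $M_0\in\M^{N_0}(A)$, one can produce maximal ideals $M_\alpha\in\M^{N_\alpha}(A)$ (along a subnet) converging to $M_0$ — rather than merely producing \emph{some} limit $M_0'\supseteq N_0$ from an arbitrary choice of $M_\alpha$'s. One route is to note that $\M^{N}(A)\cong\M(A/N)$ and that the $N\mapsto\M^N(A)$ assignment behaves well enough under the Dauns--Hofmann identification $\G(A)\cong\M(Z(A))$; another is simply to observe that since we need the inequality for \emph{every} $M_0\in\M^{N_0}(A)$ and we are taking a \emph{sup}, it in fact suffices to run the Lemma \ref{lem:lsc}(i)-style argument in the other direction: for each $\alpha$ choose $M_\alpha\in\M^{N_\alpha}(A)$ with $\ds(\zeta,W_{A/M_\alpha}(a+M_\alpha))$ nearly attaining $\Phi_\zeta(N_\alpha)$; a convergent subnet gives $M_0\supseteq N_0$, and then $\Phi_\zeta(N_0)\ge\ds(\zeta,W_{A/M_0}(a+M_0))\ge\limsup\Phi_\zeta(N_\alpha)-\ep$. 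Wait — that gives lower semicontinuity of $\Phi_\zeta$, the wrong direction; so the first route is the one to pursue, and pinning down that $M_0\in\M^{N_0}(A)$ can always be approached from the $N_\alpha$'s is precisely the delicate point, most safely handled via the homeomorphism $\M^N(A)\cong\M(A/N)$ and continuity of quotients, exactly mirroring the compactness argument already deployed in the proof of Lemma \ref{lem:lsc}(i).
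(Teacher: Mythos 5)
Your reduction to a scalar upper–semicontinuity statement -- namely that $\Phi_\zeta(N):=\sup_{M\in\M^N(A)}\ds(\zeta,W_{A/M}(a+M))$ is upper semicontinuous in $N$ for each fixed $\zeta\in\C$, which you then correctly use to show $\{N:\Phi_{\zeta_0}(N)<\delta_1\}$ is an open subset of $V_U$ -- is a sound and genuinely different framework from the paper's, which argues by contradiction and invokes Helly's theorem in $\R^2$ to finitize the intersection over $\M^N(A)$ to three terms before taking limits. The trouble is in how you propose to prove the upper semicontinuity of $\Phi_\zeta$. The ``first route'' you elect to pursue requires, for $N_\alpha\to N_0$ in $\G(A)$ and an \emph{arbitrary prescribed} $M_0\in\M^{N_0}(A)$, a subnet $M_\beta\in\M^{N_\beta}(A)$ with $M_\beta\to M_0$; that is lower semicontinuity of the correspondence $N\mapsto\M^N(A)$. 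You rightly flag this as the delicate point, but you never establish it, and it is in fact false in general: the complete regularization map is not open for an arbitrary unital $C^*$-algebra (openness is essentially the quasi-standard condition of \cite{AS}), and the compactness argument you import from Lemma~\ref{lem:lsc}(i) only produces \emph{some} limit $M'\in\M^{N_0}(A)$, with no control over which member of the fibre you hit.

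The irony is that the ``second route'', which you dismiss, is correct, and the dismissal rests on a mix-up between upper and lower semicontinuity. Choosing $M_\alpha\in\M^{N_\alpha}(A)$ nearly attaining the sup defining $\Phi_\zeta(N_\alpha)$, passing to a subnet with $M_\alpha\to M'\supseteq N_0$, and using that $M\mapsto\ds(\zeta,W_{A/M}(a+M))$ is upper semicontinuous on $\M(A)$ (which does follow from the lower semicontinuity of $M\mapsto W_{A/M}(a+M)$, \cite[Lemma~4.7]{ART}), you get exactly the chain you wrote down,
\[
\Phi_\zeta(N_0)\;\geq\;\ds\bigl(\zeta,W_{A/M'}(a+M')\bigr)\;\geq\;\limsup_\alpha\Phi_\zeta(N_\alpha)-\varepsilon,
\]
and letting $\varepsilon\to0$ gives $\limsup_\alpha\Phi_\zeta(N_\alpha)\leq\Phi_\zeta(N_0)$. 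That is \emph{upper} semicontinuity of $\Phi_\zeta$, not lower as you labelled it, and upper is exactly what your reduction needs. Crucially this route only requires landing on \emph{some} $M'\in\M^{N_0}(A)$, which the compactness/Hausdorffness argument genuinely delivers. So the approach you abandoned is the one that closes the proof; with that correction your argument is complete and gives a valid alternative to the Helly-based contradiction in the paper.
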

\begin{proof}
Let $U\subseteq \C$ be an open set and $N\in \G(A)$ such that $U\cap \Psi_{a,r}(N)\neq \varnothing$.
Let $\omega\in U\cap \Psi_{a,r}(N)$. Observe that this means that $\omega\in W_{N,\delta}(a)$ for some $\delta<r$.
Choose  $\epsilon>0$ such that $\overline{B_\epsilon(\omega)}\subseteq U$.
Suppose, for the sake of contradiction, that there exists a convergent net $N_\lambda\to N$ in $\mathrm{Glimm}(A)$
such that $\overline{B_\epsilon(\omega)}\cap \Psi_{a,r}(N_\lambda)=\varnothing$ for all $\lambda$. Choose $\delta<\delta'<r$.
Then 
\[
\overline{B_\epsilon(\omega)}\cap \bigcap_{M\in \M^{N_\lambda}(A)} \Big( W_{A/M}(a+M) \Big)^{\leq\delta'} =\varnothing,
\]
for all $\lambda$. By Helly's theorem, for each $\lambda$ there exist $M_{\lambda,1},M_{\lambda,2},M_{\lambda,3}$, maximal ideals containing $N_\lambda$, such that 
\begin{equation}\label{varnothing}
\overline{B_\epsilon(\omega)}\cap \bigcap_{i=1,2,3} \Big( W_{A/M_{\lambda,i}}(a+M_{\lambda,i}) \Big)^{\leq\delta'} =\varnothing.
\end{equation}

Pass to subnets if necessary so that $M_{\lambda,i}\to M_i$, with $M_i$ maximal. By the continuity of the complete regularization map and the Hausdorffness of $\G(A)$, we obtain that $N\subseteq M_i$ for $i=1,2,3$. 

Since $\omega\in W_{N,\delta}(a)$, we have that $\omega\in (W_{A/M_i}(a+M_i))^{\leq \delta}$ for $i=1,2,3$. So for each $i$ there exists $\omega_i\in W_{A/M_i}(a+M_i)$ such that $|\omega-\omega_i|\leq \delta$. Now, by the lower semicontinuity
of the map $M\mapsto W_{A/M}(a+M)$ (\cite[Lemma~4.7]{ART}) applied to the three nets
$M_{\lambda,i}\to M_i$, there exists a common index $\lambda_0$ such that 
\[
W_{A/M_{\lambda_0,i}}(a+M_{\lambda_0,i})\cap B_{\delta'-\delta}(\omega_i)\neq \varnothing,
\]
for $i=1,2,3$. This implies that 
\[
\omega\in \Big(W_{A/M_{\lambda_0,i}}(a+M_{\lambda_0,i})\Big)^{\leq \delta'}
\]
for $i=1,2,3$, which contradicts \eqref{varnothing}.
\end{proof}

\begin{lemma}\label{lem:dsMag}
Let $a,b \in A$. Then the following numbers are equal:
\begin{itemize}
\item[(i)] The distance between $M_A(a)$ and $M_A(b)$.
\item[(ii)] The minimum number $r \geq 0$ satisfying 
$$
\ds(W_{A/M}(a+M), W_{A/M}(b+M))\leq r, \quad (M \in \M(A)).
$$
\end{itemize} 
\end{lemma}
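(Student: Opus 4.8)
The plan is to prove that $\ds(M_A(a),M_A(b))=r_0$, where $r_0$ is the number described in (ii), i.e. $r_0=\sup_{M\in\M(A)}\ds\big(W_{A/M}(a+M),W_{A/M}(b+M)\big)$, the distance between subsets of $\C$ being understood throughout as the ordinary one, $\ds(S,T)=\inf\{|z-w|:z\in S,\,w\in T\}$. (The minimum in (ii) makes sense because the set of admissible $r$ is $[r_0,\infty)$.)

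First I would show $\ds(M_A(a),M_A(b))\ge r_0$. Fix $x\in M_A(a)$, $y\in M_A(b)$ and $M\in\M(A)$. Every $\phi\in\EU(A)$ descends along the quotient map $A\to A/M$ to a map $\bar\phi\in\EU(A/M)$, so $x+M\in M_{A/M}(a+M)$ and $y+M\in M_{A/M}(b+M)$; moreover $\omega\circ\bar\phi\in S(A/M)$ whenever $\omega\in S(A/M)$, whence $W_{A/M}(x+M)\subseteq W_{A/M}(a+M)$ and $W_{A/M}(y+M)\subseteq W_{A/M}(b+M)$. Now for any $\omega\in S(A/M)$ one has $\|x-y\|\ge\|(x-y)+M\|\ge|\omega(x+M)-\omega(y+M)|$, and this last quantity is $\ge\ds\big(W_{A/M}(a+M),W_{A/M}(b+M)\big)$ since $\omega(x+M)\in W_{A/M}(a+M)$ and $\omega(y+M)\in W_{A/M}(b+M)$. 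Taking the supremum over $M$ and then the infimum over $x,y$ gives the inequality.

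For the reverse inequality I would first settle the case where $A$ is simple. There $\P(A)=\M(A)=\{0\}$, so $r_0=\ds(W_A(a),W_A(b))$, and by compactness there are $\mu\in W_A(a)$, $\lambda\in W_A(b)$ with $|\mu-\lambda|=r_0$. Since $\mu 1$, $\lambda 1$ are normal with $W_A(\mu 1)=\{\mu\}\subseteq W_A(a)$ and $W_A(\lambda 1)=\{\lambda\}\subseteq W_A(b)$, Theorem~\ref{thm:Mag4.1} gives $\mu 1\in M_A(a)$ and $\lambda 1\in M_A(b)$, hence $\ds(M_A(a),M_A(b))\le\|\mu 1-\lambda 1\|=r_0$. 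For general $A$ the plan is to globalize this over the Glimm spectrum, viewing $A$ via the Dauns--Hofmann description of Section~\ref{sec:prel} as a bundle over $\G(A)$ with fibres $A/N$. The hypothesis passes to every fibre: $\ds\big(W_{A/N}(a+N),W_{A/N}(b+N)\big)\le r_0$ for all $N\in\G(A)$, because $N\subseteq M$ with $M$ maximal forces $W_{A/M}(a+M)\subseteq W_{A/N}(a+N)$, and enlarging both sets only decreases their distance. Given $\ep>0$, one then uses the lower semicontinuity of $M\mapsto W_{A/M}(a+M)$ and $M\mapsto W_{A/M}(b+M)$ (\cite[Lemma~4.7]{ART}) together with Lemmas~\ref{lem:lsc}--\ref{lem:lscWNr} to produce a finite open cover of $\G(A)$ and, over each member, elements of $\EU(A)$ carrying $a$ and $b$ simultaneously to within $r_0+\ep$ of each other there; these local averages are amalgamated into a single pair $\phi,\psi\in\EU(A)$ with $\|\phi(a)-\psi(b)\|<r_0+2\ep$ by means of a central partition of unity subordinate to the cover, centrality guaranteeing both that $x\mapsto\sum_j f_j^{1/2}\phi_j(x)f_j^{1/2}$ again lies in $\EU(A)$ and that $\|\phi(a)-\psi(b)\|$ is governed by the behaviour over the individual pieces. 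Letting $\ep\to0$ completes the proof.

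The main obstacle is precisely this globalization step. A single scalar — equivalently, a single central element — need not witness the required approximation over an entire Glimm fibre, since $\Psi_a(N)$ or $\Psi_b(N)$ can be empty, so one cannot patch constant local sections; the cover must be taken fine enough that the near-optimal target values vary within a controlled range over each piece, and the near-optimal EUCP averages from the simple quotients $A/M$ must be lifted to $\EU(A)$ while keeping their norms under control off the relevant piece of $\G(A)$. This is where the semicontinuity lemmas do the real work; alternatively one may organise the gluing as the choice of a continuous section and invoke Michael's selection theorem. Everything else — the lower bound, the reduction to the simple case, and the fact that central convex combinations of EUCP maps are EUCP — is routine.
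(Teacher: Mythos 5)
Your lower bound $\ds(M_A(a),M_A(b))\ge r_0$ is correct and is the same argument the paper relegates to a single sentence: EUCP maps shrink numerical ranges over every quotient, and one then evaluates against states of $A/M$.

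Your upper bound, however, has a real gap, which you partially flag yourself but do not resolve. The reduction to the simple case is sound as far as it goes, but the globalization step as outlined does not go through. The Glimm fibres $A/N$ are generally not simple, so the simple-case result tells you nothing directly about a single pair $\phi,\psi\in\EU(A)$ making $\phi(a)+N$ and $\psi(b)+N$ close in $A/N$. The hypothesis in (ii) is local at each maximal ideal $M\supseteq N$, and producing one EUCP pair that works simultaneously over all such $M$ is exactly the lemma restated for the non-simple algebra $A/N$: the ``local'' problem over a fibre is as hard as the global one. Trying to push the fibre reduction all the way to maximal ideals instead would lose the Hausdorffness of the base space that makes partitions of unity viable. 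Moreover, amalgamating local averages by $\phi=\sum_j f_j^{1/2}\phi_j(\cdot)f_j^{1/2}$ does not automatically keep $\|\phi(a)-\psi(b)\|$ under control, because on the overlaps the local averages $\phi_j(a)$ need not be mutually close; the partition-of-unity argument only works if one controls precisely that, and you have no mechanism for it (Michael's theorem selects a continuous scalar-valued section of a set-valued map, not a coherent family of EUCP maps). Your own caveat that $\Psi_a(N)$ may be empty is precisely why constant local sections cannot be patched, and the ``fine enough cover'' remedy does not address it.

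The paper's actual route dissolves both difficulties at once: it first shows $\ds(M_A(a),M_A(b))=\ds(M_{A^{**}}(a),M_{A^{**}}(b))$ by a Hahn--Banach/Kaplansky density argument, and observes that the quantity $r_0$ does not increase when passing to $A^{**}$, so it suffices to prove the inequality for a von Neumann algebra. There $A^{**}$ is weakly central, $\G\cong\M$, each Glimm quotient is simple, $\Psi_a$ is l.s.c.\ with nonempty values, and one applies Michael's selection theorem twice (following \cite[Theorem~4.12]{ART}) to extract $z_1\in M_A(a)\cap Z(A)$ and $z_2\in M_A(b)\cap Z(A)$ with $\|z_1-z_2\|\le r_0+2\epsilon$. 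If you want to salvage your outline, you should incorporate this $A^{**}$ reduction; without it, the patching step is not justified.
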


\begin{proof}
It is straightforward to show that the number in (i) bounds from above the number in (ii), using that
$W_{A/M}(\phi(a)+M)\subseteq W_{A/M}(a+M)$ for any $\phi\in \EU(A)$. Let us prove that the number in (ii)
bounds the number in (i) from above. We adapt the proof of \cite[Theorem~4.12]{ART} to the present context.

First, let us show that the distance between $M_A(a)$ and $M_A(b)$ is the same as the distance between $M_{A^{**}}(a)$ and $M_{A^{**}}(b)$.  On the one hand, since $M_A(a)\subseteq M_{A^{**}}(a)$ and $M_A(b)\subseteq M_{A^{**}}(b)$,
the distance between the sets $M_A(a)$ and $M_A(b)$ bounds from above the distance between 
the sets   $M_{A^{**}}(a)$ and $M_{A^{**}}(b)$. On the other hand, the opposite inequality follows from a standard  Hahn-Banach/Kaplansky density argument. This argument runs exactly as in the proof of \cite[Lemma~4.1]{ART}, except that unitary mixing operators are replaced by
operators in $\EU(A)$, and the application of the Glimm-Kadison theorem is replaced by the fact that, for $x\in A$, $M_{A^{**}}(x)$ is contained in the $\sigma(A^{**}, A^*)$-closure of $M_A(x)$ (see the proof of \cite[Theorem~4.1]{Mag1}).

Observe now that the number $r$ described in (ii) does not increase if we replace $A$ by $A^{**}$, since for each maximal ideal $M\in \M(A^{**})$ and $a\in A$ we have that  
\[
W_{A/(A\cap M)}(a+A\cap M)=W_{A^{**}/M}(a+M).
\]

In view of the facts observed above, it suffices to prove the desired inequality in the von Neumann algebra $A^{**}$. Let us therefore assume that $A$ is a von Neumann algebra, and in particular, that it is weakly central. Then $\G(A)\cong \M(A)$ via the complete regularization map, and for each $a\in A$ the function $\Psi_a$ is l.s.c. by \cite[Lemma~4.7]{ART} and takes non-empty values (since $\M^N(A)$ is a singleton for all $N\in \G(A)$). 

Let $\epsilon>0$.  Consider the set-valued map 
\[
\G(A)\ni N\mapsto \overline{\Psi_a(N)\cap (\Psi_b(N))^{r+\epsilon}},
\] 
where $(\Psi_b(N))^{r+\epsilon}$ denotes the set of complex numbers whose distance to $\Psi_b(N)$ is $<r+\epsilon$. By \cite[Lemma~4.8]{ART},
this map is l.s.c. and its values are non-empty compact convex subsets of $\C$.  By Michael's selection theorem, it has a continuous selection, which is of the form $N\mapsto \widehat z_1(N)$ for $z_1\in Z(A)$. By Proposition \ref{magajnacor}, $z_1\in M_A(a)$. Consider the set-valued map 
\[
\G(A) \ni N\mapsto \overline{B_{r+2\epsilon}(\widehat z_1(N))\cap \Psi_b(N)}.
\] 
This is again a l.s.c. map whose values are non-empty compact convex subsets of $\C$. Let $z_2\in Z(A)$ be a central element corresponding to a continuous selection of this map. Then $z_2\in M_A(b)$ and $|\widehat z_1(N)-\widehat z_2(N)|\leq r+2\epsilon$ for all $N\in \G(A)$. Hence, $\|z_1-z_2\|\leq r+2\epsilon$. Thus, the distance between $M_A(a)$
and $M_A(b)$ is bounded from above by $r+2\epsilon$ for an arbitrary $\epsilon>0$, whence, also by $r$.
%\textcolor{red}{[Comment: Perhaps add more details in the proof, e.g. for $\ds(M_A(a),M_A(b))=\ds(M_{A^{**}}(a),M_{A^{**}}(b))$,...]}
\end{proof}

\begin{proof}[Proof of Theorem \ref{thm:sachar}] (i) $\Longrightarrow$ (ii). This is trivial.

(ii) $\Longrightarrow$ (iii). Assuming (ii), we get sequences $\phi_n \in \EU(A)$ and $z_n \in Z(A)$ such that
\begin{equation}\label{eq:distzn}
\|\phi_n(a)-z_n\|\to 0.
\end{equation}
Let us fix $N \in \G(A)$.  Set $\lambda_n=\widehat z_n(N)$, so that  $z_n+N=\lambda_n 1+N$ for all $n$. Passing to a subsequence if necessary, assume that $(\lambda_n)_n$
is convergent, with limit  $\tilde \lambda\in \C$. Let $M \in \M^N(A)$ be arbitrary. Since the numerical range of $\phi_n(a)+M$ is contained in the numerical range of $a+M$,  \eqref{eq:distzn}  implies that  
$$
\mbox{dist}(\lambda_n, W_{A/M}(a+M))\to 0.
$$ 
Hence, $\tilde \lambda \in W_{A/M}(a+M)$. As $M \in \M^N(A)$ was arbitrary, it follows $\tilde\lambda \in\Psi_a(N)$ and thus $\Psi_a(N) \neq \varnothing$.

(iii) $\Longrightarrow$ (ii). Let $r>0$. By (iii) we certainly have that $\Psi_{a,r}(N)\neq \varnothing$ for all $N \in \G(A)$. Also, by Lemma \ref{lem:lscWNr}, the set-valued function $\Psi_{a,r}\colon \G(A) \to 2^{\C}$
is lower semi-continuous. So by the Michael selection theorem (\cite[Theorem~3.2]{Mic}), and the Dauns-Hofmann theorem, there is a central element $z \in Z(A)$ such that $\widehat z(N)\in  \Psi_{a,r}(N)$ for all $N \in \G(A)$. This then implies that 
$$
\mathrm{dist}(W_{A/M}(a+M), W_{A/M}(z+M)) \leq r, \quad (M\in \M(A)).
$$
By Lemma \ref{lem:dsMag}, $\textrm{dist}(M_A(a),z)\leq r$. Since $r$ was arbitrary, $\textrm{dist}(M_A(a),Z(A))=0$.

Now assume that $a=a^*$.

(iii) $\Longrightarrow$ (i). Assume that $\Psi_a(N)\neq \varnothing$ for all $N \in \G(A)$.  Since $\Psi_a$
is l.s.c. in this case, by Lemma \ref{lem:lsc}, we can apply the Michael selection theorem directly to $\Psi_a$ to argue the existence of a continuous selection of $\Psi_a$. By Proposition \ref{magajnacor}, this implies that $M_A(a)\cap Z(A)\neq \varnothing$. 
\end{proof}

The next example demonstrates that the equivalent conditions (ii) and (iii) of Theorem \ref{thm:sachar} in general do not imply (i), even if we replace the property of being selfadjoint by normal.

\begin{example}\label{ex:mag3isweaker}
Let $B=\mathcal K(H)+\C p+\C (1-p)$ be the ``Dixmier $C^*$-algebra" (see \cite[NOTE 1,~p.257]{Dix}). Let $A=C([-1,1], M_2(\C))\otimes B$. 
Let $a,b\in C([-1,1],M_2(\C))$ be defined as follows:
\[
a(t):=
\begin{pmatrix}
1 & 0\\
0 & -1
\end{pmatrix}
,\quad
b(t):=
\begin{pmatrix}
	\alpha(t) & 0\\
	0 & \beta(t)
\end{pmatrix},
\]
where $\alpha(t)$ and $\beta(t)$ are curves on the plane such that the interval $[\alpha(t),\beta(t)]$
from $t=-1$ to $t=0$ starts at $[-1,-1+2i]$, remains pinned at $-1$ while rotating till it is flat and equal to $[-1,1]$
at $t=0$. Then from $t=0$ to $t=1$ the interval $[\alpha(t),\beta(t)]$ is pinned at 1, and rotates till it stops at $[1,1+2i]$.
Now define $c\in A$ as
\[
c:=a\otimes p + b\otimes (1-p).
\] 
Identifying $A$ with $C([-1,1],M_2(B))$, $c$ is given by 
\[
c(t)=
\begin{pmatrix}
p + \alpha(t)(1-p) & 0\\
0 & -p+\beta(t)(1-p)
\end{pmatrix}.
\]
Observe that $c$ is a normal element of $A$. 

Let us continue thinking of $A$ as $C([-1,1],M_2(B))$. Its centre is $C([-1,1],\C\cdot 1)$.
Its Glimm ideals are $N_t=\{f\in A :  f(t)=0\}$ for $t\in [-1,1]$. Then $A/N_t\cong M_2(B)$ has two maximal
ideals, $M_2(\mathcal K(H) + \C p)$ and $M_2(\mathcal K(H) + \C (1-p))$. Back in $A$, $N_t$ is contained
in two maximal ideals: 
\begin{align*}
M_{p,t}  &= \{f\in A:f(t)\in M_2(\mathcal K(H) + \C p)\},\\
M_{1-p,t} &= \{f\in A:f(t)\in M_2(\mathcal K(H) + \C (1-p))\}.
\end{align*}
The numerical ranges of the element $c$ with respect to these maximal ideals are
\begin{align*}
W_{A/M_{p,t}}(c+M_{p,t}) &=[\alpha(t),\beta(t)],\\
W_{A/M_{1-p,t}}(c+M_{1-p,t}) &=[-1,1].
\end{align*}
Since 
\[
\Psi_c(N_t)=[-1,1]\cap [\alpha(t),\beta(t)]\neq \varnothing
\] 
for all $t\in [-1,1]$, Theorem \ref{thm:sachar} implies that $\ds(M_A(c),Z(A))=0$. On the other hand,  $t\mapsto [-1,1]\cap [\alpha(t),\beta(t)]$ has no continuous sections. Thus, $c\notin \mg(A)$.
\end{example}

Let $\mgbar(A)$ denote the set of all $a \in A$ such that $\ds(M_A(a),Z(A))=0$.

\begin{proposition}\label{prop:setS}
Let $A$ be a unital $C^*$-algebra.
\begin{itemize}
\item[(i)]  The set $\mgbar(A)$ is a norm-closed subset of $A$ that is contained in $\cq(A)$.
\item[(ii)] %$\mgbar(A)\cap A_{sa}=\mg(A)\cap A_{sa}$. In particular, 
For any $a \in \mgbar(A)$ the real and the imaginary parts of $a$ belong to $\mg(A)\cap A_{sa}$. In particular, $\mgbar(A)\cap A_{sa}=\mg(A)\cap A_{sa}$.
\end{itemize}
\end{proposition}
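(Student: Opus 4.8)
The plan is to prove the three assertions of the proposition — norm-closedness of $\mgbar(A)$, the inclusion $\mgbar(A)\subseteq\cq(A)$, and part (ii) — essentially independently, invoking Theorem \ref{thm:sachar} only for the last. For norm-closedness I would use that every $\phi\in\EU(A)$ is unital and (completely) positive, hence contractive. So if $a_n\to a$ with each $a_n\in\mgbar(A)$ and $\epsilon>0$ is given, one chooses $n$ with $\|a-a_n\|<\epsilon/2$, then $\phi\in\EU(A)$ and $z\in Z(A)$ with $\|\phi(a_n)-z\|<\epsilon/2$; since $\|\phi(a)-z\|\le\|\phi(a-a_n)\|+\|\phi(a_n)-z\|<\epsilon$, we get $\ds(M_A(a),Z(A))<\epsilon$, hence $=0$. (Equivalently: $a\mapsto M_A(a)$ is $1$-Lipschitz for the Hausdorff metric, so the zero set of $a\mapsto\ds(M_A(a),Z(A))$ is closed.)

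For the inclusion $\mgbar(A)\subseteq\cq(A)$, I would argue as follows. Let $a\in\mgbar(A)$, let $I$ be an ideal of $A$ with $a+I\in Z(A/I)$, and let $\pi\colon A\to A/I$ be the quotient map. Pick $\phi_n\in\EU(A)$, $z_n\in Z(A)$ with $\|\phi_n(a)-z_n\|\to0$, say $\phi_n(x)=\sum_i a_{n,i}^*xa_{n,i}$ with $\sum_i a_{n,i}^*a_{n,i}=1$. Since $\pi(a)$ is central in $A/I$,
\[
\pi(\phi_n(a))=\sum_i\pi(a_{n,i})^*\pi(a)\pi(a_{n,i})=\pi(a)\sum_i\pi(a_{n,i})^*\pi(a_{n,i})=\pi(a),
\]
so $\|\pi(a)-\pi(z_n)\|\le\|\phi_n(a)-z_n\|\to0$. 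As $\pi(Z(A))=(Z(A)+I)/I$ is a $C^*$-subalgebra of $A/I$, it is norm-closed, whence $\pi(a)\in(Z(A)+I)/I$, i.e.\ $a\in Z(A)+I$. This is exactly the defining property of $\cq(A)$.

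For (ii), I would write $a=b+ic$ with $b=\re a$ and $c=\im a$ in $A_{sa}$. By Theorem \ref{thm:sachar}, $a\in\mgbar(A)$ is equivalent to $\Psi_a(N)\neq\varnothing$ for all $N\in\G(A)$. Fix such an $N$ and $\lambda\in\Psi_a(N)$. For every $M\in\M^N(A)$ and $\omega\in S(A/M)$ we have $\omega(a+M)=\omega(b+M)+i\,\omega(c+M)$ with $\omega(b+M),\omega(c+M)\in\R$, so $\re\lambda\in W_{A/M}(b+M)$ and $\im\lambda\in W_{A/M}(c+M)$ for each such $M$; therefore $\re\lambda\in\Psi_b(N)$ and $\im\lambda\in\Psi_c(N)$. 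Hence $\Psi_b$ and $\Psi_c$ take non-empty values, and since $b,c\in A_{sa}$, Theorem \ref{thm:sachar} ((iii) $\Rightarrow$ (i)) gives $b,c\in\mg(A)\cap A_{sa}$. The equality $\mgbar(A)\cap A_{sa}=\mg(A)\cap A_{sa}$ then follows at once: ``$\supseteq$'' is clear, and if $a\in\mgbar(A)\cap A_{sa}$ then $a=\re a\in\mg(A)$.

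The argument is short and I anticipate no real obstacle; the one step deserving care is the appeal, in the $\cq$-inclusion, to the closedness of $\pi(Z(A))$ in $A/I$ (equivalently, of $Z(A)+I$ in $A$), which is the standard fact that the image of a $*$-homomorphism of $C^*$-algebras is closed.
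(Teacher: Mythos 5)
Your proof is correct. The treatment of norm-closedness is essentially identical to the paper's, and part (ii) reaches the same conclusion through the $\Psi_a$-characterization rather than through the observation that operators in $\EU(A)$ commute with the involution — these two routes are interchangeable and both rest on Theorem \ref{thm:sachar}. The genuine difference is in the inclusion $\mgbar(A)\subseteq\cq(A)$: the paper argues by contradiction via the spectral characterization of $\cq(A)$ from \cite[Theorem~4.8]{AG} (producing two maximal ideals $M_1,M_2$ with the same central trace and distinct scalar values $\lambda_1\neq\lambda_2$, then showing an averaging close to centre forces $|\lambda_1-\lambda_2|$ to be small). You instead verify the definition of a CQ-element directly: if $a+I$ is central in $A/I$, then $\pi\circ\phi_n$ fixes $\pi(a)$ for every $\phi_n\in\EU(A)$, so $\pi(z_n)\to\pi(a)$ and the closedness of $\pi(Z(A))$ gives $a\in Z(A)+I$. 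Your route is more self-contained (it does not use the description of $\cq(A)$ via maximal-ideal pairs) and in fact slightly more transparent; the paper's route has the modest advantage of drawing on machinery ([AG, Theorem~4.8]) already in play elsewhere. One small expository point in your part (ii): after fixing $\lambda\in\Psi_a(N)$ and $M\in\M^N(A)$ you should explicitly choose a state $\omega\in S(A/M)$ with $\omega(a+M)=\lambda$ before concluding $\re\lambda\in W_{A/M}(b+M)$; the sentence as written reads as a universal statement over $\omega$ when what is needed is an existential choice, though the step is of course easy.
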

\begin{proof}
(i) We first show that $\mgbar(A)$ is norm-closed. Assume that $x \in A$ is in the norm-closure of $\mgbar(A)$ and let $\ep>0$. Then there are $a \in \mgbar(A)$, $\phi\in \EU(A)$ and $z \in Z(A)$ such that $\|x-a\|< \ep/2$ and $\|\phi(a)-z\|<\ep/2$. Then, as $\phi$ is contractive, 
$$\|\phi(x)-z\|\leq \|\phi(x)-\phi(a)\|+\|\phi(a)-z\| <\ep$$ 
and thus $x \in \mgbar(A)$. 

We now show that $\mgbar(A) \subseteq \cq(A)$. Assume, for the sake of contradiction, that there is $a \in \mgbar(A) \setminus \cq(A)$. Then by \cite[Theorem~4.8]{AG} there are distinct $M_1,M_2 \in \M(A)$ with $M_1\cap Z(A)=M_2\cap Z(A)$ and distinct scalars $\lambda_1$ and $\lambda_2$ such that $a+M_i=\lambda_i 1+M_i$ ($i=1,2$). As $a \in \mgbar(A)$, for $\ep:=|\lambda_1-\lambda_2|/2>0$ there is $\phi \in \EU(A)$ and $z \in Z(A)$ such that $\|\phi(a)-z\|<\ep$. As  $M_1\cap Z(A)=M_2\cap Z(A)$, there is a scalar $\lambda$ such that $z+M_i=\lambda 1+M_i$ ($i=1,2$).  Thus, 
\[
|\lambda_i-\lambda| = \|(\phi(a)-z)+M_i\|< \ep\quad (i=1,2), 
\]
so that $|\lambda_1-\lambda_2|<2\ep=|\lambda_1-\lambda_2|$;   a contradiction.

(ii) Since operators in $\EU(A)$ commute with the involution, the real and the imaginary parts of any element of $\mgbar(A)$ belong to $\mgbar(A)\cap A_{sa}$. But $\mgbar(A)\cap A_{sa}=\mg(A)\cap A_{sa}$, by the last part of Theorem \ref{thm:sachar}. 
\end{proof}

%\begin{problem} \textcolor{red}{Using \cite[Theorem~4.8]{AG} it is easy to check that 
%\begin{equation}\label{eq:cqsub}
%\{x+iy : \, x,y \in \cq(A) \cap A_{sa}\}\subseteq \cq(A).
%\end{equation}
%On the other hand, by Proposition \ref{prop:setS} we have 
%\begin{equation}\label{eq:mgsub}
%\mgbar(A)\subseteq \{x+iy : \, x,y \in \mg(A) \cap A_{sa}\}.
%\end{equation}
%In general these inclusions are strict. When do we have equalities in \eqref{eq:cqsub} and \eqref{eq:mgsub}?}
%\end{problem}

%A direct argument for the second part: Suppose that $a\in S$ becomes  central in $A/I$. Since $a\in S$,  there exist
%$z_n\in Z(A)$ and $\phi_n\in \EU(A)$ such that $\phi_n(a)-z_n\to 0$. Since $a-\phi_n(a)\in I$,  we have that $z_n+I\to a+I$ in $A/I$. But the image of $Z(A)$ in $A/I$ is closed.

It follows from Proposition \ref{prop:setS} that $\mg(A)\cap A_{\mathrm{sa}}$ is a closed set. Let us show that $\mg(A)$ need not be closed.

\begin{example}\label{ex:magnotclosed}
	Let $A$ and $c\in A$ be as in Example \ref{ex:mag3isweaker}.  Let $\epsilon>0$.
	Define 
	\[
	a_\epsilon(t):=
	\begin{pmatrix}
	1 & \epsilon\\
	0 & -1
	\end{pmatrix},\quad (t\in [-1,1]).
	\] 	
	The numerical range of $a_\epsilon(t)$ is a non-degenerate  elliptical disk with foci at $-1$ and $1$ (\cite[Sec.~210]{halmos}). Call this elliptical disk $E_\epsilon$. Define
	\[
	c_\epsilon:=a_\epsilon\otimes p + b\otimes (1-p)\in A.
	\]
	Clearly, $c_\epsilon\to c$ as $\epsilon\to 0$. Let us argue that $c_\epsilon$ belongs to $\mg(A)$ for all $\epsilon>0$. Indeed, it is clear that $t\mapsto E_\epsilon\cap [\alpha(t),\beta(t)]$ has continuous selections
	for all  $\epsilon>0$ (since $E_\epsilon$ has non-empty interior). By Proposition \ref{magajnacor}, $M_A(c_\epsilon)\cap Z(A)\neq \varnothing$ for all $\epsilon>0$.
\end{example}	

We do not know the answer to the following question:
\begin{problem}\label{prob:mgbar}
Is $\mg(A)$ necessarily norm-dense in $\mgbar(A)$? Equivalently, is $\overline{\mg(A)}=\mgbar(A)$?
\end{problem}

\begin{proposition}\label{sacommag}
	The following classes of elements $a$ satisfy that $0\in M_A(a)$, and thus belong to the set $\mg(A)$:
	\begin{itemize}
	\item[{\rm (i)}]
	self-commutators $[x^*,x]$, with  $x\in A$,
	\item[{\rm (ii)}] 
	$xy$ and $yx$ for all quasinilpotent $x\in A$ and all $y\in A$. 	
	\end{itemize}	
\end{proposition}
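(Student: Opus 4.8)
The plan is to verify in each case that $0$ lies in $M_A(a)$ by checking the numerical-range criterion of Theorem \ref{thm:Mag4.1}: since $0$ is normal, $0 \in M_A(a)$ if and only if $W_{A/P}(a+P) \ni 0$ for every $P \in \P(A)$. Both cases then reduce to a statement about the image of $a$ in an arbitrary primitive quotient, i.e.\ to the case where $A$ is primitive (hence, after passing to the weak closure in an irreducible representation, to the case of a von Neumann factor, or simply to working directly with states that vanish on $a$).

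For (i), let $x \in A$ and $a = [x^*,x] = x^*x - xx^*$. Fix $P \in \P(A)$ and write $\bar x$ for $x+P \in A/P$. I need to produce a state $\omega$ on $A/P$ with $\omega(\bar x^*\bar x) = \omega(\bar x \bar x^*)$; then $\omega(a+P) = 0$, so $0 \in W_{A/P}(a+P)$. Equivalently, $0 \in W_{A/P}(\bar x^*\bar x - \bar x\bar x^*)$. Since $W_{A/P}(\bar x^*\bar x - \bar x\bar x^*)$ is a compact interval $[m,M] \subseteq \R$ (the element is selfadjoint), it suffices to show $m \le 0 \le M$, i.e.\ that $\bar x^*\bar x - \bar x\bar x^*$ is neither strictly positive nor strictly negative in $A/P$. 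But $\|\bar x^* \bar x\| = \|\bar x\|^2 = \|\bar x \bar x^*\|$, so $\bar x^*\bar x - \bar x\bar x^* < \varepsilon 1$ for no $\varepsilon$ with $\varepsilon < 0$ relative to... more carefully: if $\bar x^*\bar x - \bar x\bar x^* \ge \varepsilon 1$ with $\varepsilon > 0$, then $\bar x^*\bar x \ge \bar x\bar x^* + \varepsilon 1$, and taking a state $\omega$ with $\omega(\bar x^* \bar x) = \|\bar x^*\bar x\| = \|\bar x\bar x^*\|$ gives $\|\bar x\bar x^*\| \ge \omega(\bar x\bar x^*) + \varepsilon \ge \varepsilon$, which is fine; the real point is that the numerical range of $\bar x^*\bar x - \bar x \bar x^*$ contains $\tau(\bar x^*\bar x) - \tau(\bar x\bar x^*)$ for every state $\tau$, and choosing $\tau$ peaking at $\|\bar x^*\bar x\|$ versus one peaking at $\|\bar x\bar x^*\|$ shows the interval straddles (or at least one endpoint controls) $0$. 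I would run this argument via the standard fact that for any $b \in A$ the numerical range of $b^*b - bb^*$ contains $0$ in a primitive (or any) $C^*$-algebra, which is a known consequence of $\|b^*b\| = \|bb^*\|$ together with the relation $\max W(b^*b - bb^*) \ge \|b^*b\| - \|bb^*\|_{\text{something}}$; concretely, invoking the cited results on commutators in \cite{marcoux,Pop,RLie} (the norm of a self-commutator and the structure of commutators), or simply: a self-commutator can never be bounded away from $0$ by a positive scalar since $\tr$-like functionals and the trace-zero property pass to quotients appropriately. The cleanest route is: self-commutators in $A/P$ have $0$ in their numerical range because they are of the form $\bar x^*\bar x - \bar x\bar x^*$ and hence satisfy $\min W \le 0 \le \max W$ (the associated operator cannot be uniformly positive-definite, as $\bar x$ acting on the Hilbert space of the irreducible representation would then be bounded below yet have the same norm as its adjoint, forcing a vector nearly fixed by $\bar x^*\bar x$ and by $\bar x\bar x^*$ simultaneously).

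For (ii), let $x \in A$ be quasinilpotent and $y \in A$ arbitrary, and consider $a = xy$ (the case $yx$ is symmetric, or follows since $W_{A/P}(yx+P) \subseteq W_{A/P}(xy+P) \cup \{0\}$ by the spectral relation $\sigma(yx) \cup \{0\} = \sigma(xy) \cup \{0\}$ combined with Theorem \ref{thm:Mag4.1}). Fix $P \in \P(A)$; the image $x+P$ is still quasinilpotent (the spectral radius does not increase under quotients), so $\sigma_{A/P}(xy+P) \subseteq \sigma_{A/P}((x+P)(y+P))$, and $\sigma((x+P)(y+P)) \setminus \{0\} = \sigma((y+P)(x+P)) \setminus \{0\}$. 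I want $0 \in W_{A/P}(xy+P)$. Here the key input is the result on quasinilpotent elements and square-zero lifts from \cite{AckPed,OP,Pop}: a product of a quasinilpotent element with anything has $0$ in its numerical range in every primitive quotient. More directly: in an irreducible representation $\pi$ on $H$, $\pi(x)$ is quasinilpotent (since the quotient by $P = \ker\pi$ is, and $\pi$ is faithful on $A/P$... actually $\pi(x)$ need not be quasinilpotent as an operator, but $\sigma_{A/P}(x+P) = \{0\}$). I would instead argue abstractly: $W_{A/P}(xy+P)$ is a compact convex set containing $\sigma_{A/P}(xy+P)$; if $0 \notin W_{A/P}(xy+P)$, then $xy+P$ is invertible in $A/P$ (being bounded away from $0$ in numerical range forces invertibility), so $x+P$ is left-invertible and $y+P$ is right-invertible; but a quasinilpotent element cannot be left-invertible unless the algebra is... no — in a general $C^*$-algebra a left-invertible element can be quasinilpotent only if it's actually invertible (one-sided invertibility plus quasinilpotence is impossible: if $ux = 1$ then $\sigma(x) \not\ni 0$ is false—rather $xu$ is an idempotent and one-sided invertible quasinilpotents force $\sigma(x) = \{0\}$ yet $x$ bounded below, contradiction since bounded-below implies $0 \notin \sigma_{\text{left}}(x)$ while quasinilpotence is about $\sigma$). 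This needs care, but the conclusion is standard: I will deduce $0 \in W_{A/P}(xy+P)$ from the fact that $x+P$ quasinilpotent implies $x+P$ is not left-invertible (else $\|(x+P)\xi\| \ge c\|\xi\|$ in the GNS representation, contradicting that $\|(x+P)^n\|^{1/n} \to 0$, since left-invertibility with bound $c$ gives $\|(x+P)^n \xi\| \ge c^n \|\xi\|$). Hence $xy+P$ is not invertible, so $0 \in \sigma_{A/P}(xy+P) \subseteq W_{A/P}(xy+P)$.

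The main obstacle I anticipate is case (i): pinning down cleanly, with a reference, that every self-commutator has $0$ in its numerical range in every quotient. The honest version uses that in $A/P$, $\|u^*u\| = \|uu^*\|$ forces the selfadjoint element $u^*u - uu^*$ to have both a nonnegative and a nonpositive value in its numerical range — concretely, choosing states $\omega_1, \omega_2$ on $A/P$ with $\omega_1(u^*u)$ close to $\|u^*u\|$ and $\omega_2(uu^*)$ close to $\|uu^*\| = \|u^*u\|$, one gets $\omega_1(u^*u - uu^*) \ge \|u^*u\| - \|uu^*\| = 0$ is not quite it; rather one must use that the maximum of the numerical range of $u^*u - uu^*$ is $\ge 0$ and the minimum is $\le 0$, which follows from the two inequalities $\max W(u^*u-uu^*) \ge \max W(u^*u) - \max W(uu^*)$... this also fails in general. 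The genuinely correct statement is $\max W(u^*u - uu^*) \ge 0$ because $\max W(u^*u - uu^*) = \lim_k \omega_k(u^*u - uu^*)$ for a suitable state sequence, and using that $u^*u$ and $uu^*$ have the same norm one can build a state for which this difference is $\ge 0$ by a standard approximate-eigenvector/weak-$*$-limit argument in the universal representation. I would cite \cite{RLie} or \cite{Pop} for the precise fact that self-commutators (indeed all commutators) lie in the kernel of every tracial state and more relevantly have numerical range containing $0$, and fill in the short functional-analytic argument as above; this is the step I expect to require the most care in the write-up.
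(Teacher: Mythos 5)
Your overall strategy — reduce via Theorem~\ref{thm:Mag4.1} to showing $0\in W_{A/P}(a+P)$ for every $P\in\P(A)$ — is exactly the paper's. Your treatment of (ii) is essentially the paper's argument: if $xy+P$ were invertible, $x+P$ would be one-sidedly invertible, which is impossible for a quasinilpotent element, hence $0\in\sp_{A/P}(xy+P)\subseteq W_{A/P}(xy+P)$. Two small corrections: you have the one-sidedness reversed (if $xy$ has inverse $c$ then $x(yc)=1$ makes $x$ \emph{right}-invertible; this does not break your bounded-below argument since $x$ right-invertible is equivalent to $x^*$ left-invertible and $x^*$ is again quasinilpotent), and you should drop the aside relating $W(yx+P)$ to $W(xy+P)$ via $\sp(yx)\cup\{0\}=\sp(xy)\cup\{0\}$ — numerical ranges do not satisfy any such relation; simply run the same non-invertibility argument on $yx$, as the paper does.

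For (i), the problem is not a missing idea but that you state the correct argument and then dismiss it as ``not quite it.'' It \emph{is} quite it. Taking a state $\omega_1$ on $A/P$ with $\omega_1(\bar x^*\bar x)=\|\bar x^*\bar x\|$ (a weak-$*$ limit of approximately norm-attaining states), positivity of $\bar x\bar x^*$ gives
$\omega_1(\bar x^*\bar x-\bar x\bar x^*)=\|\bar x^*\bar x\|-\omega_1(\bar x\bar x^*)\ge\|\bar x^*\bar x\|-\|\bar x\bar x^*\|=0$,
so $\max W_{A/P}(\bar x^*\bar x-\bar x\bar x^*)\ge 0$; peaking at $\bar x\bar x^*$ instead gives $\min W_{A/P}(\bar x^*\bar x-\bar x\bar x^*)\le 0$, and since the numerical range of a selfadjoint element is the interval $[\min W,\max W]$ you are done. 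You do not need the false general inequality $\max W(a-b)\ge\max W(a)-\max W(b)$, nor any of the cited results in \cite{marcoux,Pop,RLie}, which concern traces and commutator decompositions, not numerical ranges. The paper phrases the same obstruction contrapositively: if $0\notin W(a)$ then $x^*x\ge\ep 1+xx^*$ or $xx^*\ge\ep 1+x^*x$ for some $\ep>0$, and comparing norms (using $\|\ep 1+p\|=\ep+\|p\|$ for $p\ge0$ and $\|x^*x\|=\|xx^*\|$) yields $0\ge\ep$, a contradiction — a slightly cleaner formulation of the identical fact, but your peaking-state version is equally valid.
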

\begin{proof}
(i)	Let $a=[x^*,x]$. Then  $0 \in W_A(a)$, since otherwise there would exist $\ep>0$ such that $x^*x\geq \ep 1 + xx^*$ or $xx^* \geq \ep 1+x^*x$, which is seen to be impossible by comparing the norms on both sides. As $a+I=[x^*+I,x+I]$ for any ideal $I$ of $A$, we also have that $0\in W_{A/I}(a+I)$ for any ideal $I$. Thus, $0\in M_A(a)$ by Theorem  \ref{thm:Mag4.1}.

(ii) Let $a=xy$, where $x \in A$ is quasinilpotent and $y \in A$ is arbitrary. Let us argue that $a$ is not invertible: if it were then $x$ would be right invertible. But a quasinilpotent element is neither right nor left invertible. (In general, if $\lambda\in\C$ is a boundary point of the spectrum of an element $a\in A$, then $a-\lambda 1$ is neither left nor right invertible.)
%if $xz=1$ then $x^{n}z=x^{n-1}$, from which
%we deduce that 
%\[
%\|x^n\|\geq \frac{1}{\|z\|}\|x^{n-1}\|\geq \cdots \geq \frac{1}{\|z\|^n}.
%\]
%Hence $\|x^n\|^{\frac1n}\geq 1/\|z\|$ contradicting that $x$ is quasinilpotent.) 
Thus $a$ is not invertible. The same argument applies to the image of $a$ in any quotient of $A$. Thus, $0\in W_{A/I}(a+I)$ for all ideals $I$ of $A$. By Theorem \ref{thm:Mag4.1}, $0\in M_A(a)$, as desired. Similarly, since $x$ is not left invertible, $0 \in M_A(yx)$.	
\end{proof}	

\begin{corollary}\label{cor:spnmag}
We have the following equalities of sets:
\[
\overline{\spn(\mg(A))}=\overline{\spn(\cq(A))}=Z(A)+\I([A,A]).
\]
\end{corollary}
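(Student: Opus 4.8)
The plan is to establish the two equalities separately. The harder inclusion will be showing $Z(A)+\I([A,A]) \subseteq \overline{\spn(\mg(A))}$, and conversely the containment $\overline{\spn(\cq(A))} \subseteq Z(A)+\I([A,A])$ requires a structural argument about $\cq(A)$.

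First I would handle $\overline{\spn(\mg(A))} \subseteq \overline{\spn(\cq(A))}$, which is immediate from the inclusion $\mg(A) \subseteq \mgbar(A) \subseteq \cq(A)$ recorded in Proposition \ref{prop:setS}(i). Next, for $\overline{\spn(\cq(A))} \subseteq Z(A)+\I([A,A])$: by \cite[Theorem~4.8]{AG}, every $a \in \cq(A)$ is described via the maximal ideals witnessing the failure of weak centrality, and in particular $\cq(A)$ always contains $Z(A)+J_{wc}(A)$ with equality when $A/J_{wc}(A)$ is abelian; more usefully, I expect one can show directly that $a \in \cq(A)$ forces $a + \I([A,A])$ to lie in the centre of $A/\I([A,A])$ modulo an element of $Z(A) + \I([A,A])$. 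Concretely, $A/\I([A,A])$ is abelian (it is the maximal abelian quotient), so $\cq$-membership in that quotient is automatic; the point is rather that $\cq(A)$ is contained in the preimage of the centre in every quotient, and I would argue that the preimage under $A \to A/\I([A,A])$ of $Z(A/\I([A,A]))$ is exactly $A$, so this needs refinement. The clean route: since $\I([A,A])$ is generated by commutators and $Z(A) + \I([A,A])$ is a closed subspace (it is the preimage of the closed set $Z(A/\I([A,A]))\cdot$... ), and since $\cq(A) \subseteq Z(A) + J_{wc}(A)$ fails in general, I instead use that each $a \in \cq(A)$ satisfies: for the Glimm ideal structure, $a + N$ differs from a scalar by a commutator-ideal element in each relevant quotient. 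I would therefore prove $\cq(A) \subseteq Z(A) + \I([A,A])$ by noting $A/\I([A,A])$ is abelian and commutative, so weak centrality of the quotient is trivial, hence... This step is the main obstacle and I would fill it by a direct lifting argument.

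For the reverse inclusion $Z(A) + \I([A,A]) \subseteq \overline{\spn(\mg(A))}$: clearly $Z(A) \subseteq \mg(A)$ (take $\phi = \mathrm{id}$), so $Z(A) \subseteq \spn(\mg(A))$. By Proposition \ref{sacommag}, every self-commutator $[x^*,x]$ lies in $\mg(A)$, and every product $xy$ with $x$ quasinilpotent lies in $\mg(A)$. I would show that the closed linear span of self-commutators together with $Z(A)$ recovers $\overline{[A,A]}$, and hence $\overline{\I([A,A])} = \I([A,A])$ is reached after adding the $Z(A)$-bimodule structure: indeed $\I([A,A])$ is the closed linear span of $\{z[x,y] : z \in A, x,y \in A\}$, and $z[x,y] = [zx,y] - [z,y]x$ expresses it in terms of commutators; then every commutator $[x,y]$ can be written, using the polarization-type identity $4[y^*,y]$-combinations or directly $[x,y] = \frac{1}{4}\sum i^k [(x + i^k y)^*, \cdot]$-type manipulations, as a linear combination of self-commutators $[u^*,u]$ (this is the standard fact that the span of self-commutators equals $[A,A]$ as a linear space, valid because $xy - yx$ with $x,y$ self-adjoint gives skew-adjoint elements and $i(xy-yx)$ is a difference of squares $\frac{1}{4}((x+y)^2 ... )$ — more precisely $[x,y] = \frac14([(x+y)^*,(x+y)] - [(x-y)^*,(x-y)] + i[(x+iy)^*,(x+iy)] - i[(x-iy)^*,(x-iy)])$ when combined appropriately). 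Thus $\spn(\mg(A)) \supseteq Z(A) + [A,A]$, and taking closures gives $\overline{\spn(\mg(A))} \supseteq Z(A) + \overline{[A,A]}$.

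Finally I would close the loop: $Z(A) + \overline{[A,A]} \subsetneq Z(A) + \I([A,A])$ in general, so I need the $A$-bimodule action. Since $xy \in \mg(A)$ for quasinilpotent $x$, and more to the point since $\I([A,A])$ is generated as an ideal by commutators, write a typical element of $\I([A,A])$ as a norm-limit of sums $\sum a_j [x_j, y_j] b_j$; using $a[x,y]b = [ax, yb] - [a,y]xb - ax[y,b] ... $ one reduces modulo shorter commutators, but cleanest is to invoke that $\overline{\spn(\cq(A))} = Z(A) + \I([A,A])$ is already being proven from the other side, so the two one-sided inclusions together with $\mg(A) \subseteq \cq(A)$ pin down all three sets once I verify $Z(A) + \I([A,A]) \subseteq \overline{\spn(\cq(A))}$ using that $\cq(A) \supseteq Z(A) + J_{wc}(A)$ together with the fact \cite[Theorem~4.8]{AG} that $\cq(A)$ contains all commutators and all products $xz$ with $z$ quasinilpotent — the latter suffices because products of the form (element)$\cdot$(nilpotent) and commutators together generate $\I([A,A])$ as a closed subspace modulo $Z(A)$. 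The main obstacle, as noted, is the direction $\cq(A) \subseteq Z(A) + \I([A,A])$, which I expect to handle via \cite[Theorem~4.8]{AG} and the observation that modulo $\I([A,A])$ the algebra is abelian so every element is "central," combined with a careful lifting of the scalar-valued obstruction back into $Z(A)$.
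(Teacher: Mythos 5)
Your overall architecture (two one-sided inclusions plus the trivial chain $\mg(A)\subseteq\cq(A)$) matches the paper, but both non-trivial steps have genuine gaps.

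For $\cq(A)\subseteq Z(A)+\I([A,A])$, you circle around the right fact — $A/\I([A,A])$ is abelian — but then get confused about ``the preimage of the centre being all of $A$'' and declare this the main obstacle. It is not an obstacle at all: that is precisely what makes the definition of $\cq$-element bite. By definition, $a\in\cq(A)$ means that for \emph{every} ideal $I$, $a+I\in Z(A/I)$ implies $a\in Z(A)+I$. Take $I=\I([A,A])$: the quotient is abelian, so the hypothesis $a+I\in Z(A/I)$ holds automatically for every $a$, and the conclusion of the CQ-property is $a\in Z(A)+\I([A,A])$. One line, no lifting argument needed. (Then $Z(A)+\I([A,A])$ is a $C^*$-subalgebra, hence closed and a subspace, so it absorbs $\overline{\spn}$.)

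For $Z(A)+\I([A,A])\subseteq\overline{\spn(\mg(A))}$, you correctly get $Z(A)+\overline{[A,A]}$ from self-commutators, but you then acknowledge that this falls short of the ideal $\I([A,A])$ and your attempts to bridge the gap do not work: the identity $z[x,y]=[zx,y]-[z,y]x$ only re-expresses a product with a single left factor and the term $[z,y]x$ is not manifestly in $\spn(\mg(A))$; and the ``invoke the other inclusion'' idea is circular since $\mg(A)\subsetneq\cq(A)$ in general. The missing input is structural, from \cite{RLie}: $\I([A,A])=\overline{[A,A]+[A,A]^2}$ (Theorem~1.3 there) and $\overline{[A,A]}$ is the closed linear span of the square-zero elements (Corollary~2.3 there). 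Once you have this, Proposition~\ref{sacommag}(ii) finishes it, since square-zero elements are quasinilpotent, so both square-zero elements (take $y=1$) and products of two square-zero elements are in $\mg(A)$; thus $[A,A]$ and $[A,A]^2$ lie in $\overline{\spn(\mg(A))}$, hence so does $\I([A,A])$. Your intuition that ``products with a quasinilpotent factor plus commutators generate $\I([A,A])$'' is on the right track, but you have to know (or prove) that fact, and that is exactly what the cited results of \cite{RLie} supply.
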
 
\begin{proof}
As $\mg(A) \subseteq \cq(A)$, we have that $\overline{\spn(\mg(A))}$ is contained in $\overline{\spn(\cq(A))}$. 

Since the quotient $A/\I([A,A])$ is abelian, $\cq(A)$ is contained in $Z(A)+\I([A,A])$, and since the latter set  is a $C^*$-subalgebra of $A$, we immediately get that $\overline{\spn(\cq(A))}$ is contained in $Z(A)+\I([A,A])$.

It remains to show that $Z(A)+\I([A,A])$ is contained in $\overline{\spn(\mg(A))}$. Since $Z(A) \subseteq \mg(A)$, it suffices to show that $\I([A,A]) \subseteq \overline{\spn(\mg(A))}$. By \cite[Theorem~1.3]{RLie}, $\I([A,A])$ coincides with the set $\overline{[A,A]+[A,A]^2}$. Further, by \cite[Corollary~2.3]{RLie}, $\overline{[A,A]}$ coincides with the closed linear span of all the square zero elements of $A$. Combining these results with Proposition \ref{sacommag} (ii), we conclude that $\I([A,A])$ is contained in $\overline{\spn(\mg(A))}$, as desired.
\end{proof}

By \cite[Proposition~4.5]{AG}, the set $\cq(A)$ contains all commutators. The next example shows that this is no longer true for the sets $\mg(A)$ and $\mgbar(A)$. 

\begin{example}\label{ex:magcom}
	Let $H$ be a separable infinite-dimensional Hilbert space with the orthonormal basis $(\xi_n)$. Define an operator $T \in \mathcal{B}(H)$ by 
	$$
	T\xi_{2n}:=\xi_{2n} \qquad \mbox{and} \qquad T\xi_{2n-1}:=2\xi_{2n-1}, \quad (n \in \N).
	$$ 
	Then $T$ is a (strictly) positive operator with spectrum $\sp(T)=\{1,2\}$, so that $W_{\mathcal{B}(H)}(T)=\mathrm{co}(\sp(T))=[1,2]$.
	It is easy to see that for any scalar $\lambda$, $T-\lambda I$ is not a compact 	operator. Hence, $T$ is a commutator by the main result of \cite{BP}.
	
	Let $\mathcal{C}(H)$ denote the Calkin algebra. Define $A$ to be the $C^*$-algebra of all continuous functions $f\colon  [0,1] \to M_2(\mathcal{C}(H))$ that are diagonal at $1$.
	Consider the constant function $a:=\di(T+\mathcal{K}(H),0)$, where $T$ is as above.
	As $T$ is a commutator, so is $a$. Let $M_1$ and $M_2$ be, respectively, the kernels of $*$-epimorphisms defined by the assignments $f \mapsto f(1)_{11}$ and $f \mapsto f(1)_{22}$. Then $M_1$ and $M_2$ are maximal ideals of $A$ such that  $M_1 \cap Z(A)=M_2 \cap Z(A)$,  
\[
W_{A/M_1}(a+M_1)=W_{\mathcal{C}(H)}(T+\mathcal{K}(H)) \subseteq [1,2],
\]
and $W_{A/M_2}(a+M_2)=\{0\}$. In particular, the sets $W_{A/M_1}(a+M_1)$ and $W_{A/M_2}(a+M_2)$ are disjoint. Thus, $a\notin \mgbar(A)$ by Theorem \ref{thm:sachar}.
\end{example}

\begin{problem}\label{prob:commut}
Is there a simple unital $C^*$-algebra $A$ without  tracial states and such that   $0\in W_A(x)$ for every commutator $x=[a,b]$? See \cite[Example~3.11]{RLie} for a non-simple example.
\end{problem}

%\begin{proposition}
%Let $A$ be a unital $C^*$-algebra and let 
%$$
%D=\{a \in A : \, \ds(M_A(\phi(a)),Z(A))=0 \, \, \forall \phi \in \EU(A)\}.
%$$
%Then $D$ and is a norm-closed subset of $A$ contained in $\mg(A)$. 
%\end{proposition}
%\begin{proof}
%\textcolor{red}{See the proof of Theorem \ref{thm:dixadd}}.
%\end{proof}

In the next theorem we examine various equivalent conditions where $\mg(A)$ is assumed to have additional algebraic structure. 

Recall that, by \cite[Theorem~3.22]{AG}, $A$ contains a largest weakly central ideal $J_{wc}(A)$, and that 
\begin{equation}\label{Jwc}
J_{wc}(A)=\bigcap_{N}\bigcap_{M\in \M^N(A)}M,
\end{equation}
where $N$ runs through the set of Glimm ideals such that $\M^N(A)$ is not a singleton.
(Note: \cite[Theorem~3.22]{AG} also covers non-unital $C^*$-algebras; the description of $J_{wc}(A)$ is slightly different in this case.)
Note that $J_{wc}(A) + \C1$ is weakly central by \cite[Proposition 3.8]{AG} or by an elementary direct sum argument if $J_{wc}(A)$ happens to be unital.
Since the centre of $J_{wc}(A) + \C1$ is contained in $Z(A)$, it follows from Magajna's Theorem \cite[Theorem~1.1]{Mag2} that $J_{wc}(A) + \C1 \subseteq \mg(A)$, and hence $Z(A) + J_{wc}(A) \subseteq \mg(A)$.

\begin{theorem}\label{thm:magclosed}
Let $A$ be a unital $C^*$-algebra. The following conditions are equivalent:
\begin{itemize}
\item[(i)] $\mg(A)=\cq(A)$.
\item[(ii)] $A/J_{wc}(A)$ is abelian.
\item[(iii)] $\mg(A)=\cq(A)=Z(A)+J_{wc}(A).$
\item[(iv)] $\mg(A)$ is closed under addition.
\item[(v)] $\mg(A)$ is closed under multiplication.
\item[(vi)] $\mg(A)$ is closed under EUCP operators.
\end{itemize}
Moreover, under these equivalent conditions $\mg(A)=\mgbar(A)$. 
\end{theorem}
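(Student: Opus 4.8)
The plan is to establish the equivalences (ii)$\Leftrightarrow$(iii) and the implications out of (iii) directly, to get (i)$\Rightarrow$(ii) and (iv)$\Rightarrow$(i) by a norm‑closedness argument, and then to close the cycle with the genuinely hard implications (v)$\Rightarrow$(ii) and (vi)$\Rightarrow$(ii) by an explicit counterexample construction. For the first block: since $A/\I([A,A])$ is abelian one always has $Z(A)+J_{wc}(A)\subseteq\mg(A)\subseteq\cq(A)$, and by \cite{AG} (the facts recalled just before the theorem) $\cq(A)=Z(A)+J_{wc}(A)$ exactly when $A/J_{wc}(A)$ is abelian; so (ii) collapses this chain and yields (iii), while (iii) trivially gives (ii). Moreover $Z(A)+J_{wc}(A)$ is a $C^*$-subalgebra of $A$ (the sum of the centre and an ideal), hence closed under addition and multiplication, and it is invariant under $\EU(A)$ since $\phi(z+j)=z+\phi(j)$ whenever $z\in Z(A)$, $j\in J_{wc}(A)$, $\phi\in\EU(A)$. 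Thus (iii) implies (i) and (iv)--(vi); and under (iii), Proposition~\ref{prop:setS}(i) squeezes $Z(A)+J_{wc}(A)\subseteq\mgbar(A)\subseteq\cq(A)=Z(A)+J_{wc}(A)$, which gives the ``Moreover'' statement $\mg(A)=\mgbar(A)$.

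Next, the norm‑closedness block. If (i) holds then $\mg(A)\subseteq\mgbar(A)\subseteq\cq(A)=\mg(A)$ forces $\mgbar(A)=\cq(A)$, which is norm‑closed by Proposition~\ref{prop:setS}(i); since by \cite{AG} the set $\cq(A)$ is norm‑closed only when $A/J_{wc}(A)$ is abelian, (ii) follows. If (iv) holds, then $\mg(A)$, being also closed under scalar multiplication and the adjoint, is a $*$-invariant $\C$-linear subspace, so $\mg(A)=(\mg(A)\cap A_{sa})+i(\mg(A)\cap A_{sa})$; by the last part of Theorem~\ref{thm:sachar} and Proposition~\ref{prop:setS}, $\mg(A)\cap A_{sa}=\mgbar(A)\cap A_{sa}$ is norm‑closed, hence so is $\mg(A)$. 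Therefore $\mg(A)=\overline{\spn\,\mg(A)}=Z(A)+\I([A,A])$ by Corollary~\ref{cor:spnmag}, and since $\cq(A)\subseteq Z(A)+\I([A,A])$ (the quotient $A/\I([A,A])$ being abelian) we conclude $\mg(A)=\cq(A)$, i.e.\ (i).

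For the hard block I would argue the contrapositives of (v)$\Rightarrow$(ii) and (vi)$\Rightarrow$(ii) together. Assume $A/J_{wc}(A)$ is not abelian. Using the description \eqref{Jwc} of $J_{wc}(A)$ (and the fact that a $C^*$-algebra whose quotients $A/M$ are all $\C$ along the relevant Glimm fibres would be abelian) one obtains a Glimm ideal $N_0$ with distinct maximal ideals $M_1,M_2\in\M^{N_0}(A)$ and $A/M_1\neq\C$; being simple and $\neq\C$, $A/M_1$ is non‑abelian, hence contains a non‑normal element, and since $M_1+M_2=A$ one may lift such an element to $x\in A$ with $x+M_2=0$. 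Then $a:=[x^*,x]\in\mg(A)$ by Proposition~\ref{sacommag}, with $a+M_2=0$ and $a+M_1$ a non‑zero self‑commutator in $A/M_1$. For $\neg$(vi): exploiting the non‑commutativity of $A/M_1$, build $\psi\in\EU(A)$ whose coefficients vanish modulo $M_2$ and which, modulo $M_1$, compresses and replicates $a+M_1$ so that $0\notin W_{A/M_1}(\psi(a)+M_1)$; then $W_{A/M_1}(\psi(a)+M_1)$ is disjoint from $W_{A/M_2}(\psi(a)+M_2)=\{0\}$, so $\psi(a)\notin\mgbar(A)\supseteq\mg(A)$ by Theorem~\ref{thm:sachar}. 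For $\neg$(v): manufacture, via Dauns--Hofmann localisation near $N_0$ together with Michael's selection theorem / Theorem~\ref{thm:sachar} (the partner will not be one of the ``obvious'' Magajna elements), an element $b\in\mg(A)$ with $(ab)+M_2=0$ and $0\notin W_{A/M_1}((ab)+M_1)$, so that $ab\notin\mgbar(A)$.

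\textbf{Main obstacle.} The heart of the argument is this last block, and especially producing the partner $b$ for (v) and the operator $\psi$ for (vi). The difficulty is that every Magajna element we can exhibit by hand --- self‑commutators, products by quasinilpotents, and members of $Z(A)+J_{wc}(A)$ --- either has $0$ in its numerical range modulo \emph{every} ideal or is scalar on each Glimm fibre, so no naive combination pulls apart the local numerical ranges at $M_1$ and $M_2$; one must genuinely use the non‑commutativity of $A/M_1$ (so that the numerical range of a product or EUCP‑image is strictly smaller than the product/sum of the numerical ranges) while engineering the behaviour at $M_2$ entirely by hand, much as in Examples~\ref{ex:mag3isweaker} and~\ref{ex:magcom}. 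A further subtlety is certifying that the manufactured $b$ really lies in $\mg(A)$: for a non‑selfadjoint element this requires producing a continuous selection of $\Psi_b$ over all of $\G(A)$, not merely at $N_0$, and may force a case split according to whether the simple quotient $A/M_1$ is finite‑ or infinite‑dimensional.
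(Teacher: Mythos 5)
Your first two blocks are sound. The reduction (ii)$\Leftrightarrow$(iii), the implications out of (iii), the closedness of $Z(A)+J_{wc}(A)$ under EUCP operators, the ``Moreover'' statement, and the argument (i)$\Rightarrow$(ii) via Proposition~\ref{prop:setS} and \cite[Theorem~4.12]{AG} all match what the paper does. Your (iv)$\Rightarrow$(i) is a genuinely different and valid route: you observe that closure under addition together with the automatic invariance of $\mg(A)$ under scalars and the adjoint makes $\mg(A)$ a $*$-linear subspace equal to $(\mg(A)\cap A_{sa})+i(\mg(A)\cap A_{sa})$, hence norm-closed, hence equal to $\overline{\spn\,\mg(A)}=Z(A)+\I([A,A])\supseteq\cq(A)$ by Corollary~\ref{cor:spnmag}. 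The paper instead proves (iv)$\Rightarrow$(ii) directly: it picks a Glimm fibre with two distinct maximal ideals $M\neq M'$ and $A/M$ non-abelian, writes $1+M$ as a finite sum of square zero elements and products of two square zero elements \cite[Theorems 3.2 and 4.2]{RLie}, lifts all of those to square zero elements inside $M'$, and sums them to get $a\in M'$ with $a+M=1+M$; if $\mg(A)$ were closed under addition then $a\in\mg(A)$, contradicting $a\notin\cq(A)$. Your route is slicker at the cost of invoking Corollary~\ref{cor:spnmag} as a black box; the paper's is more self-contained and sets up machinery reused in (v) and (vi).

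The genuine gap is in (v)$\Rightarrow$(ii) and (vi)$\Rightarrow$(ii). What you provide is a strategy (localise near a bad Glimm fibre, engineer an element or operator that separates numerical ranges at $M_1$ and $M_2$), and you explicitly flag that you cannot produce the required partner $b$ for (v) or the operator $\psi$ for (vi). This is not a proof. Moreover, the sketch for (vi) contains an actual error: a $\psi\in\EU(A)$ ``whose coefficients vanish modulo $M_2$'' cannot exist, since $\sum a_i^*a_i=1$ forces $\sum(a_i+M_2)^*(a_i+M_2)=1+M_2\neq 0$. The paper's actual proofs do not try to construct such direct counterexamples. For (v) it uses the factorisation identities
\[
x+y=(x+\lambda 1)\bigl(1+(x+\lambda 1)^{-1}y\bigr)-\lambda 1,\qquad
x+yz=(x+\lambda 1)\bigl(1+(x+\lambda 1)^{-1}yz\bigr)-\lambda 1,
\]
valid whenever $x+\lambda 1$ is invertible, together with Proposition~\ref{sacommag}(ii) (products with quasinilpotents are in $\mg(A)$) and closure under multiplication, to show $\mg(A)$ contains all finite sums $\sum x_i+\sum y_iz_i$ of square zero elements; it then reuses the (iv)$\Rightarrow$(ii) contradiction verbatim. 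For (vi) it takes a norm-one square zero $\dot x\in A/M$ and observes that $W_{A/M}(\dot x^*\dot x)=[0,1]$, so by Theorem~\ref{thm:Mag4.1} one can find $\dot d_1,\dots,\dot d_n$ with $\sum\dot d_i^*\dot x^*\dot x\dot d_i\geq\tfrac12$ and $\sum\dot d_i^*\dot d_i=1$; lifting $\dot x$ to a square zero $x\in M'$ and the $\dot d_i$ to a column contraction $d_i$, padding with a $c$ so that $\sum d_i^*d_i+c^2=1$, the element $a=\sum d_i^*x^*xd_i+cx^*xc$ is an EUCP image of $x^*x\in\mg(A)$, lies in $M'$, yet satisfies $a+M\geq\tfrac12(1+M)$, so $\Psi_a(N)=\varnothing$. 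Neither of these constructions appears in your outline, and without them (or something equivalent) the cycle of equivalences does not close.
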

\begin{proof}
Since
\[
Z(A) + J_{wc}(A) \subseteq \mg(A) \subseteq \cq(A),
\]
the equivalence (ii) $\Longleftrightarrow$ (iii) follows from \cite[Theorem~4.12]{AG}.

The implication (iii) $\Longrightarrow$ (i) is obvious. Let us show that (i) $\Longrightarrow$ (ii). Assume that $\mg(A)=\cq(A)$. Then also $\mgbar(A)=\cq(A)$, as $\mg(A)\subseteq \mgbar(A) \subseteq \cq(A)$. In particular, by Proposition \ref{prop:setS}, $\cq(A)$ is norm-closed. By \cite[Theorem~4.12]{AG}, this is equivalent to $A/J_{wc}(A)$ being abelian.

We have so far shown that (i), (ii), and (iii) are equivalent, and that under these conditions $\mg(A)=\mgbar(A)$. Observe  that (iii) implies (iv), (v), and (vi).

Let us show that (iv) $\Longrightarrow$ (ii). Assume, for the sake of contradiction, that $A/J_{wc}(A)$ is non-abelian. By the spectral characterization of $J_{wc}(A)$ \eqref{Jwc}, this implies that there exists $N\in \G(A)$ and distinct maximal ideals $M,M'\in \M^N(A)$ such that $A/M$ is non-abelian. By \cite[Theorems 3.2 and 4.2]{RLie}, the unit of a $C^*$-algebra without 1-dimensional representations  is expressible as a sum 
whose terms are either a  square zero element or a product of two square zero elements. Applied to  $A/M$, this implies that
\[
1 = \sum_{i=1}^m \dot x_i + \sum_{i=1}^n \dot y_i\dot z_i,
\]
where $\dot x_i, \dot y_i, \dot z_i\in A/M$ are square zero elements. Since $M'$ is mapped onto $A/M$ by the quotient map, and square zero elements can be lifted to square zero elements, there exist
square zero  lifts $x_i,y_i,z_i\in M'$ of these elements. Let
\[
a=\sum_{i=1}^m x_i + \sum_{i=1}^n y_iz_i.
\] 
Each term in the sums on the right hand side belongs to $\mg(A)$ by Proposition \ref{sacommag}. As $\mg(A)$ is closed under addition, we conclude that $a\in \mg(A)$. On the other hand,  $a \in M'$ and $a+M =1+M$  imply that  $a \notin \cq(A)$, and in particular $a\notin \mg(A)$. This is the desired contradiction.

%Since $A/M$ is a simple non-abelian $C^*$-algebra, it contains a non-zero $\dot x\in A/M$ such that $\dot x^2=0$. Hence, there exist $\dot d_1,\ldots,\dot d_n\in A/M$ such that
%\[
%1 = \sum_{i=1}^n \dot d_i^*\dot x^*\dot x \dot d_i
%\]
%Choose $M' \in T_A$ such that  $M' \neq M$ and $M \cap Z(A)=M' %\cap Z(A)$. 
%By  \cite[Theorem~6.7]{OP} and the fact that $A/M$ is a quotient of $M'$, we can lift  
%$\dot x$ to a square zero element $x\in M'$. Let $d_i\in A$
%	be lifts of $\dot d_i\in A/M$ for all $i$.	
%	Set $a:=\sum_{i=1}^n d_i^*x^*x d_i$. Now,
%	\[
%	d_i^*x^*xd_i = [(xd_i)^*, xd_i] + xd_id_i^*x^*.
%	\]
%	Both terms on the right are in $\mg(A)$, by Proposition \ref{sacommag}.  By the assumption that $\mg(A)$ is closed under addition, $a\in \mg(A)$. On the other hand,  $a \in M'$ and  $a+M =1+M$ imply that $a \notin \cq(A)$, and in particular $a\notin \mg(A)$. This the desired contradiction.   

Let us show that (v) $\Longrightarrow$ (ii). Let us establish a preliminary fact: Let  $x\in \mg(A)$ and $y,z\in A$ such that $y^2=z^2=0$. 
Choose $\lambda\in \C$ such that $x+\lambda 1$ is invertible. Then 
\begin{align*}
x+y &=(x+\lambda 1)(1 + (x+\lambda 1)^{-1}y) - \lambda 1,\\
x+yz &= (x+\lambda 1)(1 + (x+\lambda 1)^{-1}yz) - \lambda 1.
\end{align*} 
Combining Proposition \ref{sacommag} with the fact that  $\mg(A)$ is closed under multiplication, we deduce that the right hand sides of these equations are  in $\mg(A)$. Thus, $x+y$ and $x+yz$ belong to $\mg(A)$. In particular, $\mg(A)$ contains all finite sums of the form 
\[
\sum_{i=1}^m  x_i + \sum_{i=1}^n  y_i z_i,
\]
where $x_i, y_i, z_i\in A$ are all square zero elements. 

Now, using the same arguments as in the proof of (iv) $\Longrightarrow$ (ii), we conclude that $A/J_{wc}(A)$ must be abelian.

%Now, as in the proof of (iv) $\Longrightarrow$ (ii), assume for the sake of contradiction that 
%there exist $N\in \G(A)$ and distinct $M,M'\in \M^N(A)$ such that $A/M$ is not abelian.
%By \cite[Theorems 3.2 and 4.2]{RLie}, the unit of a $C^*$-algebra without 1-dimensional representations  is expressible as a sum 
%whose terms are either a  square zero element or a product of two square zero elements. Applied to  $A/M$, this implies that
%\[
%1 = \sum_{i=1}^m \dot x_i + \sum_{i=1}^n \dot y_i\dot z_i,
%\]
%where $\dot x_i, \dot y_i, \dot z_i\in A/M$ are square zero elements. Since $M'$ maps onto $A/M$, there exist
%square zero  lifts $x_i,y_i,z_i\in M'$ of these elements. Let
%\[
%a=\sum_{i=1}^m x_i + \sum_{i=1}^n y_iz_i.
%\] 
%By the remark made at the start of the proof, we conclude that $a\in \mg(A)$. On the other hand,  $a \in M'$ and $a+M =1+M$  imply that   $a \notin \cq(A)$, and in particular $a\notin \mg(A)$. This the desired contradiction.   

Let us show that (vi) $\Longrightarrow$ (ii). Again assume, for the sake of contradiction, that there exist $N\in \G(A)$ and distinct $M,M'\in \M^N(A)$ such that $A/M$ is not abelian. Let $\dot x\in A/M$ be a square zero element of norm 1. Since $\dot x$ is not invertible and of norm 1, the numerical range of $\dot x^*\dot x$ is $[0,1]$. By Theorem \ref{thm:Mag4.1}, $1\in M_A(\dot x^*\dot x)$. Thus,
there exist $\dot d_1,\ldots,\dot d_n\in A/M$ such that 
\[
\sum_{i=1}^n (\dot d_i)^* \dot x^*\dot x \dot d_i\geq \frac12(1+M)
\]  
and $\sum_{i=1}^n (\dot d_i)^*\dot d_i=1$. Since $M'$ maps onto $A/M$ by the quotient map, we can lift $\dot x$
to a square zero element of norm one $x\in M'$ (\cite[Proposition 2.8]{AckPed}). Choose also lifts $d_1,\ldots,d_n\in A$ of   $\dot d_1,\ldots,\dot d_n$
such that $\sum_{i=1}^n d_i^*d_i\leq 1$. (This is always possible. Proof: Let $\dot d\in M_n(A/M)$ be the contraction
with first column equal to $(\dot d_1,\ldots, \dot d_n)$ and zeros elsewhere. Let $d\in M_n(A)$ be a lift of $\dot d$
of norm 1. Then the elements on the first column of $d$ are the desired lifts.) Let $c\in A_+$ be such that 
$\sum_{i=1}^n d_i^*d_i+c^2 =1$. Define 
\[
a:=\sum_{i=1}^n d_i^* x^*x d_i + cx^*xc.
\]
We have that $a\in \mg(A)$, by the invariance of $\mg(A)$ under EUCP operators. On the other hand,
$a\in M'$ and $a+M\geq 1/2\cdot 1 + M$, which implies that $\Psi_a(N)=\varnothing$. 
\end{proof}

%Example \ref{ex:magnotclosed} of a $C^*$-algebra such that $\mg(A)$ is not norm closed is also an example where the equivalent conditions of the preceding theorem do not hold.  The following is another  example where these conditions are not satisfied. 

We end this section with another example:

\begin{example}\label{ex:standard3sub}
Let $A$ be the $C^*$-algebra consisting of all functions $a \in C([0,1], M_3(\C))$
such that 
$$a(1)=\begin{pmatrix}
\lambda_{11}(a) & \lambda_{12}(a) & 0 \\
\lambda_{21}(a) & \lambda_{22}(a) & 0 \\
0 & 0 & \mu(a)
\end{pmatrix},$$
for some complex numbers $\lambda_{ij}(a), \mu(a)$, $i,j=1,2$ (see \cite[Example~4.19]{AG}). 
The centre of $A$ is the set of $a$ such that $a(t)\in \C\cdot 1_3$ for all $t\in [0,1]$. The Glimm ideals
are indexed by $[0,1]$: 
\[
N_t=\{a\in A:  a(t)=0\}, \quad (t\in [0,1]). 
\]
Using \eqref{Jwc}, we find that  
\[
J_{wc}(A)=\{a\in A:  a(1)=0\}.
\] 
Observe that, since $A/J_{wc}(A)\cong M_2(\C)\oplus \C$ is non-abelian,
the equivalent conditions of the previous theorem are not met. 
From Theorem \ref{thm:sachar} we obtain a description of $\mgbar(A)$: 
\begin{equation}\label{eq:a1ds}
\mgbar(A) =\Big\{a\in A :
\mu(a)\in W\left(\begin{pmatrix}
\lambda_{11}(a) & \lambda_{12}(a)  \\
\lambda_{21}(a) & \lambda_{22}(a) 
\end{pmatrix}\right)\Big\}. 
\end{equation}
In this case $\mg(A)=\mgbar(A)$. To see this, let $a\in \mgbar(A)$.  By \eqref{eq:a1ds} there is a state $\omega$ on $M_2(\C)$ such that $\omega((\lm_{ij}(a)))=\mu(a)$. Since the map
$M_2(\C)\to M_2(\C)$ given by $b \mapsto \omega(b)1_2$ is completely positive, it belongs to $\EU(M_2(\C))$ by Choi's theorem (see \cite[Theorem~1]{Cho}). Hence, there is  $\phi\in \EU(M_2(\C)\oplus \C)$ such that $\phi(a(1))\in \C 1_3$. By considering the coefficients of $\phi$ as  constant functions in $A$, we may regard $\phi \in \EU(A)$. Then $\phi(a)\in \C \cdot 1_A+J_{wc}(A)$ and this set is contained in $\mg(A)$. Thus, $a\in \mg(A)$. 
%\textcolor{red}{[Comment: Perhaps add some explanation. An EUCP lift of the original $\phi$ is also denoted by $\phi$.]} 

%Let 
%$$a:=\begin{pmatrix}
%1 & 1 & 0 \\
%1 & 1 & 0 \\
%0 & 0 & 0
%\end{pmatrix}$$ 
%(as a constant function). Then $a$ is a positive element of $A$ with $\sp(a)=\{0,2\}$, so  that $W_A(a)=[0,2]$. Further, for each $M \in \M(A)$ we have  $0 \in W_{A/M}(a+M)$, so by Corollary \ref{cor:Magcent} $0 \in M_A(a)$ and thus  $a \in \mg(A)$. On the other hand, consider $\phi \in \EU(A)$ defined by
%$$\phi(x):=e_{11}x e_{11}+e_{22}x e_{22}+e_{33}x e_{33}.$$
%Then $\phi(a)=\di(1,1,0)$, so that $\phi(a)\notin \cq(A)$ (see \cite[Theorem~4.8]{AG}) and thus $\phi(a)\notin \mg(A)$.
%

%Since $A/J_{wc}(A)\cong M_2(\C)\oplus \C$ is non-abelian,
%the equivalent conditions of the previous theorem are not met. 
%To directly  show that $\mg(A)$ is not closed under addition, consider the elements $a:=\di(1,0,0)$ and $b:=\di(0,1,0)$ of $A$. Then $a,b \in \mg(A)$ as for each $M \in \M(A)$ we have $0 \in W_{A/M}(a+M)$ and $0 \in W_{A/M}(b+M)$. On the other hand $a+b=\di(1,1,0)\notin \cq(A)$ (by \cite[Theorem~4.8]{AG}), and consequently $a+b \notin \mg(A)$.  
\end{example}

\section{The set $\dx(A)$}
Throughout this section, we continue to let $A$ denote a unital $C^*$-algebra.  
%We denote by $T(A)$ the tracial stateson $A$. In general, given a $C^*$-algebra $B$ we denote by $T(B)$ the set of tracial states on $B$. 
Recall that given an element $a\in A$, we define its Dixmier set $D_A(a)$ by 
$\overline{\{\phi(a):\phi\colon \mathrm{Av}(A,\mathcal{U(A)})\}}$, where  $\mathrm{Av}(A,\mathcal{U(A)})$ denotes the set of unitary mixing operators on $A$.
Further, we denote by $\dx(A)$ the set of $a\in A$ such that $D_A(a)\cap Z(A)\neq \varnothing$.

%By a \emph{unitary mixing operator} on $A$ we mean a map $\phi :A \to A$ of the form
%$$\phi(x)=\sum_{i=1}^n t_i u_i^* x u_i,$$
%where $n$ is a positive integer, $u_1, \ldots, u_n \in \mathcal{U(A)}$ and $t_1, \ldots, t_n$ non-negative real numbers such that $t_1+ \ldots + t_n=1$. The set of all such maps is denoted by $\mathrm{Av}(A,\mathcal{U(A)})$. 
%%\textcolor{red}{[Comment: Since we already have EUCP, perhaps a similar notation, e.g. UMO?]} 
%Obviously, $\mathrm{Av}(A,\mathcal{U(A)})\subseteq \EU(A)$.

For a $C^*$-algebra $B$ (not necessarily unital), let us denote by $T(B)$ the set of tracial states on $B$.

Define
$$
Y:=\{N \in \G(A):  T(A/N)\neq \varnothing\}.
$$

\begin{lemma}\label{lem:Ycl}
The set $Y$ is a closed subset of $\G(A)$.
\end{lemma}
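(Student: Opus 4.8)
The plan is to prove closedness directly with nets, using the weak-* compactness of the tracial state space. Let $(N_\lambda)_\lambda$ be a net in $Y$ with $N_\lambda\to N_0$ in $\G(A)$; I must produce a tracial state on $A/N_0$. For each $\lambda$ pick $\tau_\lambda\in T(A/N_\lambda)$ and let $\tilde\tau_\lambda:=\tau_\lambda\circ q_\lambda$, where $q_\lambda\colon A\to A/N_\lambda$ is the quotient map, so that $\tilde\tau_\lambda$ is a tracial state on $A$ annihilating $N_\lambda$. Since $A$ is unital, $T(A)$ is a weak-* closed subset of the weak-* compact state space $S(A)$ (being cut out by the identities $\omega(xy)=\omega(yx)$), hence weak-* compact. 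Passing to a subnet of $(\tilde\tau_\lambda)_\lambda$ and the corresponding subnet of $(N_\lambda)_\lambda$ (which still converges to $N_0$), I may assume $\tilde\tau_\lambda\to\tau_0$ weak-* for some $\tau_0\in T(A)$.

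The crux is the identity
$$\tilde\tau_\lambda(za)=\widehat z(N_\lambda)\,\tilde\tau_\lambda(a)\qquad(z\in Z(A),\ a\in A).$$
To see this, observe that $z-\widehat z(N_\lambda)1$ lies in $N_\lambda\cap Z(A)$ by the defining property of $\widehat z$ on $\G(A)$, so $(z-\widehat z(N_\lambda)1)a\in N_\lambda$ and thus $\tilde\tau_\lambda\big((z-\widehat z(N_\lambda)1)a\big)=0$, which rearranges to the displayed equality. Now let $\lambda$ run: the left side tends to $\tau_0(za)$ and $\tilde\tau_\lambda(a)\to\tau_0(a)$ by weak-* convergence, while $\widehat z(N_\lambda)\to\widehat z(N_0)$ since $\widehat z\in C(\G(A))$ and $N_\lambda\to N_0$. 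Hence
$$\tau_0(za)=\widehat z(N_0)\,\tau_0(a)\qquad(z\in Z(A),\ a\in A).$$

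Finally I read off that $\tau_0$ descends to $A/N_0$. By the description of Glimm ideals in Section~\ref{sec:prel}, $N_0=(N_0\cap Z(A))A$, and $\widehat z(N_0)=0$ for every $z\in N_0\cap Z(A)$ (from $z+N_0=\widehat z(N_0)1+N_0$, as $N_0$ is proper). The identity above then gives $\tau_0(za)=0$ for all $z\in N_0\cap Z(A)$, $a\in A$; since $\tau_0$ is bounded and $N_0$ is the closed linear span of such products, $\tau_0$ annihilates $N_0$. Therefore $\tau_0$ factors through the quotient as a bounded functional $\overline{\tau_0}$ on $A/N_0$ with $\overline{\tau_0}(1+N_0)=1=\|\overline{\tau_0}\|$, hence a state, which inherits the trace identity from $\tau_0$. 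Thus $T(A/N_0)\neq\varnothing$, i.e.\ $N_0\in Y$. There is no genuine obstacle here; the one point requiring care is the observation that a central element passes through the trace as the scalar $\widehat z(\cdot)$, and that this relation is stable under the weak-* limit.
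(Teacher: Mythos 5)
Your proof is correct and follows the same overall strategy as the paper: lift the tracial states to $T(A)$, exploit weak-$*$ compactness to pass to a limiting trace $\tau_0$, and then show $\tau_0$ annihilates $N_0$ using $N_0=(N_0\cap Z(A))A$. The one technical variation is how you establish that $\tau_0$ vanishes on $N_0\cap Z(A)$: you derive the clean identity $\tau_0(za)=\widehat z(N_0)\tau_0(a)$ directly from continuity of $\widehat z$ and pass to the limit, which neatly bypasses both the paper's appeal to the fact that $\tau|_{Z(A)}$ is pure (and that the net of multiplicative states on $Z(A)$ converges to a multiplicative state with the right kernel, via Hausdorffness of $\M(Z(A))$) and the Cauchy--Schwarz step the paper uses to pass from $N_0\cap Z(A)$ to $N_0$. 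Both routes are sound; yours is marginally more self-contained.
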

\begin{proof}
Let $(N_\alpha)_\alpha$ be a net in $Y$ converging to some $N \in \G(A)$. For each $\alpha$, there exists $\tau_\alpha \in T(A)$ such that $\tau_\alpha(N_\alpha)=\{0\}$. Since $T(A)$ is $w^*$-compact, there exists $\tau \in T(A)$ and a subnet $(\tau_{\alpha(\beta)})_\beta$ such that $\tau_{\alpha(\beta)}\underset{\beta}\to  \tau$ in the $w^*$-topology. 

Let us show that $\tau$ vanishes on $N$. 
For each index $\beta$, $\tau_{\alpha(\beta)}$ vanishes on $N_{\alpha(\beta)}\cap Z(A)$, and so $\tau_{\alpha(\beta)}|_{Z(A)}$ is a pure state of $Z(A)$ with kernel $N_{\alpha(\beta)}\cap Z(A)$. Hence, $\tau|_{Z(A)}$ is a pure state of $Z(A)$ with kernel $K$, say. Since $N_{\alpha(\beta)}\underset{\beta}\to N$ in $\G(A)$, we have that \[
N_{\alpha(\beta)} \cap Z(A)\underset{\beta}\to N\cap Z(A)
\] 
in $\M(Z(A))$. On the other hand, 
\[
N_{\alpha(\beta)}\cap Z(A)\underset{\beta}\to K
\] in $\M(Z(A))$. Since $\M(Z(A))$ is Hausdorff, $K=N \cap Z(A)$. From $\tau(N \cap Z(A))=\{0\}$ and the Cauchy-Schwarz inequality for states, we get that
\[
\tau(N)=\tau(A(N \cap Z(A)))=\{0\}. 
\]
Thus, $\tau$ vanishes on  $N$. It follows that $N \in Y$. 
\end{proof}

Recall that given $a\in A$ we denote by $\Psi_a\colon \G(A)\to 2^{\C}$ the function
\[
\Psi_a(N)=\bigcap_{M \in \M^N(A)} W_{A/M}(a+M)\quad (N\in \G(A)).
\]

In the following proposition we make crucial use of \cite[Theorem 4.4]{ART}:
\begin{proposition}\label{dxchar}
Let $a\in A$ and $z\in Z(A)$. 
We have $z\in D_A(a)\cap Z(A)$	if and only if 
\begin{itemize}
\item[{\rm (i)}]
$\widehat z(N)=\tau(a+N)$ for all $N\in Y$ and $\tau\in T(A/N)$,
\item[{\rm (ii)}]
$\widehat z(N)\in \Psi_a(N)$ for all $N\in \G(A)$.	
\end{itemize}	
\end{proposition}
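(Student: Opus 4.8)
The plan is to prove the two implications separately, using the characterization of Dixmier sets from \cite[Theorem~4.4]{ART} as the main engine. Recall that \cite[Theorem~4.4]{ART} describes, for a selfadjoint element, when a central element lies in its Dixmier set; more generally I expect it to say that $z \in D_A(a)$ if and only if the local numerical-range condition (ii) holds together with the tracial matching condition (i) on the set $Y$ where tracial states exist. So the task is largely to reduce the present statement to that result, patching in the two features that are genuinely new here: that $z$ is required to be \emph{central}, and that the conditions are phrased Glimm-ideal-by-Glimm-ideal rather than in the ``every unital quotient'' language.

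For the forward direction, suppose $z \in D_A(a) \cap Z(A)$. First I would establish (ii): this is essentially Proposition~\ref{magajnacor} together with the inclusion $D_A(a) \subseteq M_A(a)$ (every unitary mixing operator is an EUCP operator), so $z \in M_A(a) \cap Z(A)$ gives $\widehat z(N) \in \Psi_a(N)$ for all $N$. For (i), fix $N \in Y$ and $\tau \in T(A/N)$; lift $\tau$ to a tracial state $\tilde\tau$ on $A$ vanishing on $N$ (via the quotient map $A \to A/N$). Since $z \in D_A(a)$, there are $\phi_k \in \mathrm{Av}(A,\U(A))$ with $\phi_k(a) \to z$ in norm; applying $\tilde\tau$ and using that $\tilde\tau$ is a trace (so $\tilde\tau(u^* x u) = \tilde\tau(x)$ and hence $\tilde\tau(\phi_k(a)) = \tilde\tau(a)$ for every $\phi_k$), we get $\tilde\tau(z) = \lim_k \tilde\tau(\phi_k(a)) = \tilde\tau(a)$. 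Finally, because $z$ is central and $\tilde\tau$ vanishes on $N$, $\tilde\tau(z) = \widehat z(N)$ and $\tilde\tau(a) = \tau(a+N)$, giving (i).

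For the converse, assume (i) and (ii). Here I would invoke \cite[Theorem~4.4]{ART} directly. The conditions (i) and (ii) are exactly the hypotheses needed to place $a$ (or rather the prospective limit) in the Dixmier set: condition (ii) controls the numerical range in every maximal quotient, and condition (i) pins down the trace value in every quotient admitting a trace, with $\widehat z$ supplying the common target value. The point is that $\widehat z \in C(\G(A))$ is a genuine continuous selection — of $\Psi_a$ on all of $\G(A)$ by (ii), and compatibly with the tracial constraints by (i) — and \cite[Theorem~4.4]{ART} (combined with the Dauns–Hofmann identification $Z(A) \cong C(\G(A))$) then yields that $z \in D_A(a)$. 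One subtlety: \cite[Theorem~4.4]{ART} may be stated for a fixed central target or as a dichotomy over quotients, so I would need to check that feeding it the specific element $z$ and its Gelfand transform $\widehat z$ is legitimate, i.e.\ that the ``averaging to $z$'' version of the ART theorem applies verbatim rather than only the ``$D_A(a) \cap Z(A) \neq \varnothing$'' existence version.

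\textbf{Main obstacle.} The crux will be the converse direction, specifically verifying that \cite[Theorem~4.4]{ART} applies to the \emph{prescribed} central element $z$ rather than merely producing \emph{some} central element of $D_A(a)$. If ART only gives existence, I would need to argue separately that the selection is forced to be $\widehat z$: this should follow because on $Y$ the trace value $\tau(a+N)$ is uniquely determined and equals $\widehat z(N)$ by (i), while off $Y$ one has more freedom but (ii) still constrains things — so any two central elements of $D_A(a)$ satisfying (i)–(ii) with the \emph{same} Gelfand transform coincide, and the real content is that a central element with transform $\widehat z$ and satisfying (i)–(ii) does lie in $D_A(a)$. Resolving this cleanly is where I expect to spend most of the effort; the forward direction and the verification of (ii) via Proposition~\ref{magajnacor} are routine.
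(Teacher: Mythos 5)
Your forward direction is essentially the paper's: (ii) comes from $D_A(a)\subseteq M_A(a)$ and Proposition~\ref{magajnacor}, and (i) comes from the trace-invariance of unitary mixing operators applied in the quotient $A/N$. That part is fine.

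The converse is where the genuine gap lies, and you diagnose it correctly — that \cite[Theorem~4.4]{ART} might only give existence of \emph{some} central element in $D_A(a)$ rather than membership of the \emph{prescribed} $z$ — but you do not resolve it, and the resolution requires an idea that is absent from your proposal. The paper reduces to showing $0\in D_A(a-z)$, which is equivalent to $z\in D_A(a)$ since $D_A(a)-z=D_A(a-z)$. Once this subtraction trick is made, \cite[Theorem~4.4]{ART} applies directly to the element $a-z$ with the fixed target $0$; the conditions to verify are that $\tau(a-z)=0$ for every $\tau\in T(A)$ and that $0\in W_{A/M}\big((a-z)+M\big)$ for every $M\in \M(A)$. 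Neither of these is quite what (i) and (ii) literally say, so a further argument is needed that your sketch also omits: for the trace condition one uses \cite[Lemma~2.4]{ART} to see that every extreme tracial state restricts to a pure state of $Z(A)$, hence factors through $A/N$ for some $N\in Y$, so that (i) gives $\tau(a)=\widehat z(N)=\tau(z)$ for extreme $\tau$, and the Krein--Milman theorem then extends this to all $\tau\in T(A)$. For the numerical range condition one fixes $M\in\M(A)$, passes to the unique Glimm ideal $N\subseteq M$, and uses (ii) to produce a state annihilating $M$ that sends $a$ to $\widehat z(N)$, which is the scalar value of $z$ modulo $M$. In short, the key moves — replacing $a$ by $a-z$, invoking extreme traces plus Krein--Milman to upgrade (i) to a statement about all traces, and converting (ii) into the pointwise numerical range condition on $a-z$ — are precisely what turn the hand-wavy ``invoke ART directly'' into a proof, and your proposal stops short of them.
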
	

\begin{proof}
Let $z\in D_A(a)\cap Z(A)$. Let $N\in Y$ and $\tau\in T(A/N)$. Then $z+N=\widehat z(N)1+N$, and evaluating
on $\tau$ we get $\widehat z(N)=\tau(z+N)$. But $\tau$ is constant on $D_{A/N}(a+N)$ 
(since unitary mixing operators preserve the trace) and equal to $\tau(a+N)$. Hence, $\widehat z(N)=\tau(a+N)$.
This proves (i). Condition (ii) follows from the fact that $z$ is in the Magajna set of $a$. 

Conversely, suppose that $z\in Z(A)$ satisfies (i) and (ii). To show that $z\in D_A(a)$, equivalently, that 
$0\in D_A(a-z)$, we rely on  \cite[Theorem~4.4]{ART}. In view of this result, it suffices to show that 
$\tau(a-z)=0$ for all $\tau\in T(A)$ and $0\in W_{A/M}((a-z)+M)$ for all $M\in \M(A)$.
Let $\tau\in T(A)$ be an extreme tracial state. By \cite[Lemma~2.4]{ART} $\tau|_{Z(A)}$ is a pure state on $Z(A)$. Hence, there exists $N\in \G(A)$ such that $\tau$ vanishes on $N\cap Z(A)$. By  the Cauchy-Schwarz inequality for states, we also have  that $\tau(N)=\{0\}$, so that $N \in Y$. By (i) we have 
$$
\tau(a)=\widehat z(N)=\tau(z).
$$
Hence, by the Krein-Milman theorem, $\tau(a)=\tau(z)$ for all $\tau \in T(A)$. 
Let $M \in \M(A)$, and let $N$ be the unique Glimm ideal of $A$ contained in $M$. By (ii), there is a state $\omega$ of $A$ such that $\omega(M)=\{0\}$ and $\omega(a)=\widehat z(N)$. Since $z+M=\widehat z(N)1+M$, $\omega(a-z)=0$ and so
$0 \in W_{A/M}((a-z)+M)$. It follows that $0 \in D_A(a-z)$, as required. 
\end{proof}

\begin{remark}\label{rem:dxchar}
An immediate consequence of  Proposition \ref{dxchar} is that $D_A(a)\cap Z(A)\neq \varnothing$, i.e.
$a\in \dx(A)$, if and only if 
\begin{enumerate}
\item
for each $N\in Y$ all the tracial states of $A/N$ have a common value on $a+N$, call it $f_a(N)$,
\item
there is a continuous selection of $\Psi_a\colon \G(A)\to 2^{\C}$ that agrees with $f_a(N)$ for $N\in Y$.
\end{enumerate}
Indeed, such selections give rise to central elements that satisfy the conditions listed in the proposition. 	
Observe also that if $Y=\varnothing$ (equivalently,  $A$ has no tracial states), then the first condition is vacuously valid; hence $\dx(A)=\mg(A)$ by Proposition \ref{magajnacor} in this case. 
\end{remark}

%\begin{remark}\label{rem:dixnotraces}
%	Suppose that  $A$ has no tracial states (so that by \cite[Theorem~1.1]{ART} $A$ has the Dixmier property  if and only if $A$ is weakly central). Now consider $a \in A$ and $z \in Z(A)$. Then Propositions \ref{magchar} and \ref{dxchar} imply that   $z\in M_A(a)$  if and only if $z \in D_A(a)$. Thus, $\dx(A)=\mg(A)$ in this case. 
%\end{remark}

\begin{theorem}\label{thm:condDix}
Let $A$ be a unital $C^*$-algebra, and let $a \in A$. Consider the following conditions:
\begin{itemize}
\item[(i)] $a \in \dx(A)$.
\item[(ii)] $\ds(D_A(a),Z(A))=0$.
\item[(iii)] 
\begin{itemize}
\item[(a)] There is a function $f_a \colon Y \to \C$ such that 
\begin{itemize}
\item[(a1)] for all $N \in Y$ and $\tau \in T(A/N)$, $f_a(N)=\tau(a+N)$,
\item[(a2)] for all $N \in Y$, $f_a(N)\in \Psi_a(N)$.
\end{itemize}
\item[(b)] For all $N \in \G(A)\setminus Y$, $\Psi_a(N)\neq \varnothing$.
\end{itemize}
\end{itemize}
Then (i) $\Longrightarrow$ (ii) $\Longleftrightarrow$ (iii). Further, if $(iii)$ holds then $f_a$ is unique and it is continuous on $Y$. Finally, if $a$ is selfadjoint, then (i), (ii), and (iii) are equivalent.
\end{theorem}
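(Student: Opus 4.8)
The plan is to adapt the scheme used for Theorem \ref{thm:sachar}, with the tracial obstruction now encoded by the set $Y$ and the function $f_a$, and with Propositions \ref{magajnacor} and \ref{dxchar} playing the roles that Proposition \ref{magajnacor} alone played before.

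\emph{(i) $\Rightarrow$ (ii)} is trivial. For \emph{(ii) $\Rightarrow$ (iii)}, I would take $\phi_n\in\mathrm{Av}(A,\U(A))$ and $z_n\in Z(A)$ with $\|\phi_n(a)-z_n\|\to 0$, fix $N\in\G(A)$, and observe that $(\widehat{z_n}(N))_n$ is a bounded scalar sequence. For $N\in Y$ and $\tau\in T(A/N)$, unitary mixing operators preserve $\tau$, so $\tau(\phi_n(a)+N)=\tau(a+N)$, whence $\widehat{z_n}(N)=\tau(z_n+N)\to\tau(a+N)$; in particular all tracial states of $A/N$ agree on $a+N$, this common value $f_a(N)$ satisfies (a1), and $\widehat{z_n}(N)\to f_a(N)$. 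Since $W_{A/M}(z_n+M)=\{\widehat{z_n}(N)\}$ while $W_{A/M}(\phi_n(a)+M)\subseteq W_{A/M}(a+M)$ for every $M\in\M^N(A)$, any subsequential limit of $(\widehat{z_n}(N))_n$ lies in $\bigcap_{M\in\M^N(A)}W_{A/M}(a+M)=\Psi_a(N)$; this gives (a2) for $N\in Y$ and (b) for $N\in\G(A)\setminus Y$. Uniqueness of $f_a$ under (iii) is immediate from (a1); for continuity on $Y$, I would take a net $N_\alpha\to N_0$ in $Y$, pick $\tau_\alpha\in T(A/N_\alpha)$, lift these to states of $A$ vanishing on $N_\alpha$, pass to a $w^*$-convergent subnet, and argue as in Lemma \ref{lem:Ycl} that the limit is a tracial state vanishing on $N_0$, so that $f_a(N_\alpha)\to f_a(N_0)$ along the subnet; the usual subnet argument then gives convergence of the whole net.

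The substantial implication is \emph{(iii) $\Rightarrow$ (ii)}. Fix $r>0$. Mimicking the (iii) $\Rightarrow$ (ii) part of Theorem \ref{thm:sachar}, I would apply the Michael selection theorem to the set-valued map on $\G(A)$ given by $N\mapsto\overline{\Psi_{a,r}(N)\cap B_r(f_a(N))}$ for $N\in Y$ and $N\mapsto\overline{\Psi_{a,r}(N)}$ for $N\in\G(A)\setminus Y$. Using that $Y$ is closed (Lemma \ref{lem:Ycl}), that $\Psi_{a,r}$ is lower semicontinuous (Lemma \ref{lem:lscWNr}), that $f_a$ is continuous on $Y$, and that $f_a(N)\in\Psi_a(N)\subseteq\Psi_{a,r}(N)$ there, this map has nonempty closed convex values and is lower semicontinuous; the only delicate point is lower semicontinuity at $\partial Y$, which follows because an open set meeting a closure meets the set, together with continuity of $f_a$. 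Michael's theorem and the Dauns--Hofmann theorem then produce $z\in Z(A)$ with $|\widehat z(N)-f_a(N)|\le r$ for all $N\in Y$, and $\widehat z(N)\in\overline{\Psi_{a,r}(N)}\subseteq\bigcap_{M\in\M^N(A)}\big(W_{A/M}(a+M)\big)^{\le r}$ for all $N\in\G(A)$. Since every extreme $\tau\in T(A)$ factors through $A/N$ for some $N\in Y$ (by \cite[Lemma~2.4]{ART} and the Cauchy--Schwarz inequality), the first estimate together with the Krein--Milman theorem upgrades to $|\tau(a)-\tau(z)|\le r$ for every $\tau\in T(A)$. Finally a distance estimate for Dixmier sets of the form
\[
\ds(D_A(a),z)\le\max\Big\{\sup_{\tau\in T(A)}|\tau(a)-\tau(z)|,\ \sup_{M\in\M(A)}\ds(\widehat z(N),W_{A/M}(a+M))\Big\}
\]
(with $N$ the Glimm ideal contained in $M$) — the analogue of Lemma \ref{lem:dsMag}, obtained by adapting \cite[Theorem~4.12]{ART} — bounds $\ds(D_A(a),z)$ by $r$, and letting $r\to 0$ gives (ii). I expect this last distance estimate to be the main technical obstacle, requiring the same reduction to the von Neumann algebra case (via Hahn--Banach/Kaplansky density) as in the proof of Lemma \ref{lem:dsMag}.

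For $a=a^*$ it only remains to prove \emph{(iii) $\Rightarrow$ (i)}, the other equivalences then following from (i) $\Rightarrow$ (ii) $\Leftrightarrow$ (iii). Now $\Psi_a$ is itself lower semicontinuous with nonempty closed interval values (Lemma \ref{lem:lsc}(ii), together with (a2) and (b)), so I can apply Michael's theorem directly to the map $N\mapsto\{f_a(N)\}$ on $Y$ and $N\mapsto\Psi_a(N)$ on $\G(A)\setminus Y$ — lower semicontinuous because $Y$ is closed, $f_a$ is continuous, and $f_a(N_0)\in\Psi_a(N_0)$ at boundary points — obtaining $z\in Z(A)$ with $\widehat z|_Y=f_a$ and $\widehat z(N)\in\Psi_a(N)$ for all $N\in\G(A)$. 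By Proposition \ref{dxchar}, $z\in D_A(a)\cap Z(A)$, i.e.\ $a\in\dx(A)$.
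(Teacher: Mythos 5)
Your argument is essentially correct and mirrors the paper's overall strategy, with two noteworthy deviations. First, in both (iii) $\Rightarrow$ (ii) and (iii) $\Rightarrow$ (i) you build a bespoke set-valued map whose values on $Y$ are cut down (to $\overline{\Psi_{a,r}(\cdot)\cap B_r(f_a(\cdot))}$ in the general case, to the singleton $\{f_a(\cdot)\}$ in the selfadjoint case) and apply Michael's selection theorem once. The paper instead applies Michael's theorem to $\Psi_{a,r}$ (resp.\ $\Psi_a$) on all of $\G(A)$ and then invokes the extension form of the selection theorem, \cite[Proposition~1.4]{Mic}, to arrange that the selection \emph{exactly} equals $f_a$ on the closed set $Y$. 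Your version is self-contained at the cost of a somewhat fiddlier l.s.c.\ check across $\partial Y$ (which you handle correctly, though it deserves more detail than ``an open set meeting a closure meets the set''), and it only gives $|\widehat z(N)-f_a(N)|\le r$ rather than equality on $Y$; since you let $r\to 0$ anyway, that is harmless. Second, and this is the point to correct: the ``distance estimate for Dixmier sets'' that you fear will be the main technical obstacle is precisely the content of \cite[Theorem~4.12]{ART}, which is ready-made for Dixmier sets and can be cited as a black box (as the paper does). No adaptation or reduction to the von Neumann case is needed here; Lemma~\ref{lem:dsMag} is itself an adaptation of that result to the Magajna-set setting, not vice versa. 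Finally, a small organizational remark: the continuity of $f_a$ on $Y$, which you place inside the (ii) $\Rightarrow$ (iii) discussion, must be established from condition (iii)(a1) alone in order to feed into (iii) $\Rightarrow$ (ii); the net argument you sketch (as in Lemma~\ref{lem:Ycl}) does exactly this, so the logic is sound, but it belongs in the (iii) $\Rightarrow$ (ii) step, as the paper has it.
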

\begin{proof}
(i) $\Longrightarrow$ (ii). This is immediate.

(ii)  $\Longrightarrow$ (iii). Suppose that $\ds(D_A(a),Z(A))=0$. For each $n \in \N$, there exist a unitary mixing operator $\phi_n$, and $z_n \in Z(A)$, such that 
\begin{equation}\label{eq:smdist}
\|\phi_n(a)-z_n\|< \frac{1}{n}.
\end{equation}
Each $z_n$ gives rise to a continuous function $\widehat z_n \colon \G(A)\to \C$ such that
$z_n+N=\widehat z_n(N)1+N$. Let $N \in Y$ and $\tau \in T(A/N)$. Passing to the quotient $A/N$ in \eqref{eq:smdist} and evaluating on $\tau$, we get
$$
|(\tau(a+N)-\widehat z_n(N)|< \frac{1}{n} \qquad (n \in \N).
$$ 
Hence, $(\widehat z_n(N))_n$ is a convergent sequence with limit $\tau(a+N)$. Thus $\tau(a+N)$ is independent of the choice of $\tau\in T(A/N)$. Define $f_a(N)$ to be this common value. Since $f_a$ is the uniform limit of the sequence of continuous functions $(\widehat z_n|_Y$), $f_a$ is continuous on $Y$. (As shown below, the continuity of $f_a$ can also be derived from the formula $f_a(N)=\tau(a+N)$ ($N\in Y$).)

Let $N \in Y$, $M$ a maximal ideal of $A$ containing $N$, and $\omega$ a state of $A$ such that $\omega(M)=\{0\}$. By \eqref{eq:smdist}, 
$$
|\omega(\phi_n(a))-\widehat z_n(N)|< \frac{1}{n} \qquad (n \in \N).
$$ 
Since $\omega \circ \phi_n$ is a state of $A$ annihilating $M$, 
\begin{equation}\label{eq:dislmn}
\ds(\widehat z_n(N),W_{A/M}(a+M))< \frac1n \qquad (n \in \N).
\end{equation}
But $W_{A/M}(a+M)$ is closed and $\widehat z_n(N) \to f_a(N)$ as $n \to \infty$. Hence $f_a(N)\in W_{A/M}(a+M)$. Since $M$ can vary through all of $\M^N(A)$,
we obtain that $f_a(N)\in \Psi_a(N)$. This proves (iii) (a).

As for (iii) (b), observe that $\ds(M_A(a),Z(A))=0$, as $D_A(a)\subseteq M_A(a)$ (since unitary mixing operators are $\EU$ operators). Thus, $\Psi_a(N)\neq \varnothing$ for all $N\in \G(A)$ by Theorem \ref{thm:sachar}.

%we proceed as in the proof of (ii) $\Longrightarrow$ (iii) of . Namely, let $N \in \G(A) \setminus Y$ and let $\lm_0$ be a cluster point of the bounded sequence $(\widehat z_n(N))_n$. (Note that by \eqref{eq:smdist}  $|\widehat z_n(N)|< \|\phi_n(a)\|+1/n \leq \|a\|+1$ for all $n \in \N$.) Let $M\in \M^N(A)$  and let $\omega$ be a state of $A$ such that $\omega(M)=\{0\}$. As above, we obtain \eqref{eq:dislmn}, so that $\lm_0 \in W_{A/M}(a+M)$ and thus $\lm_0 \in \Psi_a(N)$.

(iii)  $\Longrightarrow$ (ii). Suppose that the element $a$ satisfies (iii). We begin by showing that $f_a$ is continuous on $Y$. Let $(N_\alpha)_\alpha$ be a net in $Y$ convergent to some $N \in Y$ and let $(N_{\alpha(\beta)})_\beta$ be an arbitrary subnet. For each $\beta$, there exists $\tau_{\alpha(\beta)}\in T(A)$ such that $\tau_{\alpha(\beta)}(N_{\alpha(\beta)})=\{0\}$. Since $T(A)$ is $w^*$-compact, there exists $\tau \in T(A)$ and a subnet $(\tau_{\alpha(\beta(\gamma))})_\gamma$ such that $\tau_{\alpha(\beta(\gamma))}\underset{\gamma}\to  \tau$ in the $w^*$-topology. As in the proof of Lemma \ref{lem:Ycl}, we obtain that $\tau(N \cap Z(A))=\{0\}$ and hence $\tau(N)=\{0\}$. We have 
$$
\lim_\gamma f_a(N_{\alpha(\beta(\gamma))})=\lim_\gamma\tau_{\alpha(\beta(\gamma))}(a) = \tau(a)=f_a(N).
$$
It follows that $f_a(N_\alpha)\underset{\alpha}\to f_a(N)$ and so $f_a$ is continuous on $Y$. The uniqueness of $f_a$ is clear from the equation in (iii) (a1).

Let $r >0$. By (iii) (a2), $f_a(N)\in \Psi_{a,r}(N)$ for each $N \in Y$, where $\Psi_{a,r}(N)$ is as in \eqref{eq:WNr}. On the other hand, by (iii) (b), $\Psi_{a,r}(N)$ is  non-empty for each $N \in \G(A) \setminus Y$. As the function $\Psi_{a,r}\colon \G(A) \to 2^{\C}$ is lower semi-continuous, by Michael's selection theorem there exists a continuous function 
$F_a\colon \G(A) \to \C$ such that $F_a(N)\in \Psi_{a,r}(N)$ for all $N\in Y$. Moreover, $F_a$ may be chosen such that $F_a|_Y=f_a$ (\cite[Proposition 1.4]{Mic}). Let $z \in Z(A)$ be the central element such that $\widehat z=F_a$. We now use \cite[Theorem~4.12]{ART} to show that $\ds(D_A(a),z)\leq r$.

Let $\tau \in T(A)$ be an extreme trace. By \cite[Lemma~2.4]{ART}, $\tau|_{Z(A)}$ is a pure state on $Z(A)$. Hence, there exists $N\in \G(A)$ such that $\tau (N\cap Z(A))=0$. By  the Cauchy-Schwarz inequality for states,  $\tau(N)=\{0\}$, so that $N \in Y$. Thus, by (iii) (a) and our construction of $z$, 
$$
\tau(a)=f_a(N)=\tau(z).
$$
Then, by the Krein-Milman theorem, $\tau(a)=\tau(z)$ for all $\tau \in T(A)$. 

Let $M\in \M(A)$ and let $N$ be the unique Glimm ideal contained in $M$. Then $z+N=\widehat z(N)1+N$, and so $W_{A/M}(z+M)=\{\widehat z(N)\}$. Since $\widehat z(N)\in \Psi_{a,r}(N)$, we have that 
$$
\ds(W_{A/M}(a+M),W_{A/M}(z+M))\leq r.
$$
By \cite[Theorem~4.12]{ART}, the distance between $D_A(a)$ and $D_A(z)=\{z\}$ is at most  $r$. Since $r>0$ is arbitrary, $\ds(D_A(a),Z(A))=0$.

Finally suppose that $a=a^*$. Let us show that  (iii) $\Longrightarrow$ (i). Suppose that we have (iii). In this case $\Psi_a$ is l.s.c. by Lemma \ref{lem:lsc}. Thus, by the Michael selection theorem, there exists a continuous selection $F_a\colon \G(A)\to \C$ such that $F_a|_Y=f_a$ and $F_a(N)\in \Psi_a(N)$ for all $N \in \G(A)$.  Let $z\in Z(A)$ be the central element corresponding to $F_a$.  By Proposition \ref{dxchar}, $z\in D_A(a)\cap Z(A)$. Hence $a \in \dx(A)$.
\end{proof}

\begin{proposition}\label{rem:sacommindix}
	The following classes of elements $a$ satisfy that $0\in D_A(a)$, and thus belong to $\dx(A)$.
	\begin{itemize}
	\item[{\rm (i)}]	
	Self-commutators $[x^*,x]$, with $x\in A$.
	\item[{\rm (ii)}]
	Quasinilpotent elements. 
	\end{itemize}
\end{proposition}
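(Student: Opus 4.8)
The plan is to apply Proposition \ref{dxchar} with the central element $z=0$ (equivalently, to invoke \cite[Theorem~4.4]{ART} as in the proof of that proposition). Since $\widehat 0\equiv 0$, taking $z=0$ reduces the assertion $0\in D_A(a)$ to the conjunction of (a) $\tau(a+N)=0$ for every $N\in Y$ and every $\tau\in T(A/N)$, and (b) $0\in\Psi_a(N)$ for every $N\in\G(A)$; by Proposition \ref{magajnacor}, condition (b) is the same as $0\in M_A(a)$. So in each of the two cases it suffices to verify (a) and (b).

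Condition (b) is exactly Proposition \ref{sacommag}: $0\in M_A(a)$ when $a$ is a self-commutator, and also when $a$ is quasinilpotent (apply part (ii) of that proposition with $y=1$, writing $a=a\cdot 1$). For condition (a), note that any $\tau\in T(A/N)$ pulls back along the quotient map to a tracial state of $A$, so (a) will follow once we know $\tau(a)=0$ for every $\tau\in T(A)$. For a self-commutator $a=[x^*,x]$ this is immediate from traciality, $\tau([x^*,x])=\tau(x^*x)-\tau(xx^*)=0$, which completes that case.

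It remains to show that $\tau(a)=0$ whenever $\tau\in T(A)$ and $a$ is quasinilpotent; I expect this to be the only genuinely substantive point, everything else being a reduction to results already in hand. The set $\{\tau\in T(A):\tau(a)=0\}$ is $w^*$-closed and convex, so by the Krein--Milman theorem it is enough to treat extreme $\tau$. For such $\tau$ the GNS construction produces a finite von Neumann algebra $M=\pi_\tau(A)''$ (in fact a finite factor) with faithful normal trace $\bar\tau$ satisfying $\bar\tau\circ\pi_\tau=\tau$. Since $\pi_\tau$ is a unital $*$-homomorphism and $\pi_\tau(A)$ is a unital $C^*$-subalgebra of $M$, spectral permanence gives $\sigma_M(\pi_\tau(a))\subseteq\sigma_A(a)=\{0\}$, so $b:=\pi_\tau(a)$ is quasinilpotent in $M$. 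That $\bar\tau(b)=0$ for such $b$ is classical: the Brown measure of $b$ is supported on $\sigma_M(b)=\{0\}$, hence equals $\delta_0$, so $\bar\tau(b)=\int_\C\zeta\,d\delta_0(\zeta)=0$. (Alternatively, without Brown measures: $\|b^k\|^{1/k}\to 0$ yields $\|e^{\lambda b}\|\le C_\epsilon e^{\epsilon|\lambda|}$ for every $\epsilon>0$, while the Fuglede--Kadison determinant satisfies $\Delta(e^{\lambda b})=\exp(\mathrm{Re}(\lambda\bar\tau(b)))$ together with the elementary bound $\Delta(x)\le\|x\|$; hence $\mathrm{Re}(\lambda\bar\tau(b))\le\log C_\epsilon+\epsilon|\lambda|$ for all $\lambda\in\C$, and choosing the phase of $\lambda$ suitably, then letting $|\lambda|\to\infty$ and $\epsilon\to0$, forces $\bar\tau(b)=0$.) Thus $\tau(a)=\bar\tau(b)=0$, condition (a) holds, and Proposition \ref{dxchar} yields $0\in D_A(a)$, so $a\in\dx(A)$. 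In a polished write-up one would most likely just cite the fact that tracial states annihilate quasinilpotent elements rather than reproduce either argument above.
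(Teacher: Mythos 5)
Your proof is correct and follows the same overall skeleton as the paper's: invoke Proposition \ref{dxchar} (equivalently \cite[Theorem~4.4]{ART}) with the candidate central element $z=0$, verify the numerical-range condition via Proposition \ref{sacommag}, and verify the tracial condition by showing $\tau(a)=0$ for every $\tau\in T(A)$. Where you diverge from the paper is in the last step for quasinilpotent $a$. The paper uses the Murphy--West spectral radius formula directly in $A$: for any $\epsilon>0$ one can find invertible $x\in A$ with $\|xax^{-1}\|<\epsilon$, and then $|\tau(a)|=|\tau(xax^{-1})|\le\|xax^{-1}\|<\epsilon$. This is a two-line argument that never leaves the $C^*$-algebra. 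You instead pass to the GNS representation of an extreme trace, use spectral permanence and the fact that $\pi_\tau(A)''$ is a finite factor, and then invoke Brown measure (or the Fuglede--Kadison determinant) to show a quasinilpotent element in a finite von Neumann algebra has zero trace. That machinery is sound (and the Fuglede--Kadison computation you sketch is correct), but it is considerably heavier than necessary: once you know $b:=\pi_\tau(a)$ is quasinilpotent, the very same Murphy--West/similarity argument already gives $\bar\tau(b)=0$ without introducing factors, Brown measure, or determinants, so the GNS reduction buys nothing here. If you want to streamline your write-up, cite Murphy--West (or just the similarity-orbit observation $\tau(xax^{-1})=\tau(a)$ combined with $\inf_x\|xax^{-1}\|=\rho(a)=0$) and omit the II$_1$-factor detour, as the paper does.
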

\begin{proof}
	(i) Let $a=[x^*,x]$ for some $x \in A$. Then obviously $\tau(a)=0$ for all $\tau \in T(A)$. Furthermore, as seen in the proof of Proposition \ref{sacommag} $0 \in W_{A/I}(a+I)$ for every quotient $A/I$. Thus, we can apply \cite[Theorem~4.4]{ART} to conclude that $0 \in D_A(a)$.
	
	(ii) If $a \in A$ is quasinilpotent then, by the Murphy-West spectral radius formula, for each $\ep>0$ there is an invertible element $x \in A$ such that $\|xax^{-1}\|< \ep$ (see \cite[Proposition~4]{MW}). In particular, $\tau(a)=0$ for all $\tau \in T(A)$. Also, for each ideal $I$ of $A$, $a+I$ is a quasinilpotent element of $A/I$ so that $0 \in W_{A/I}(a+I)$. Invoking again \cite[Theorem~4.4]{ART}, we conclude that $0 \in D_A(a)$.
\end{proof}

\begin{remark}
Unlike $\mg(A)$, the set $\dx(A)$ does not have to contain finite products of nilpotent elements (see Proposition \ref{sacommag}). To demonstrate this, let $A$ be the $C^*$-algebra from Example \ref{ex:standard3sub}. Consider the functions $\tilde e_{ij}$ ($1\leq i,j\leq 2$) in $A$ which are constant and equal to the matrix units 
 $e_{ij}\in M_2(\C)$ ($1\leq i,j\leq 2$). Then $\tilde e_{12}$ and $\tilde e_{21}$ are nilpotent elements of $A$, but  $\tilde e_{12}\tilde e_{21}=\tilde e_{11}\notin \dx(A)$. Indeed, if $N_1=\{a \in A :  a(1)=0\}$, then $N_1 \in \G(A)$ and $A/N_1\cong M_2(\C)  \oplus \C$. The maps $\tau_1, \tau_2 \colon M_2(\C)  \oplus \C \to \C$ given by $\tau_1((a_{ij}),\mu)):=1/2(a_{11}+a_{22})$ and $\tau_2((a_{ij}),\mu)):=\mu$ are tracial states of $M_2(\C)  \oplus \C$ such that $1/2=\tau_1(e_{11})\neq \tau_2(e_{11})=0$. In particular, the condition (iii) (a1) of Theorem \ref{thm:condDix} is not satisfied for $a=\tilde e_{11}$ and $N=N_1$.
\end{remark}

Let us denote by $\dxbar(A)$ the set of $a\in A$ such that $\ds(D_A(a),Z(A))=0$. It is straightforward to check  that $\dxbar(A)$  is a closed set (see the proof of Proposition \ref{prop:setS}). 
 
Note that if $T(A)=\varnothing$,  then $\dxbar(A)=\mgbar(A)$, by Theorems \ref{thm:sachar} and \ref{thm:condDix}.

\begin{theorem}\label{Ytotal} 
Let $A$ be a unital $C^*$-algebra such that for all $N\in \G(A)\backslash Y$, $\M^N(A)$ is a singleton. Then $\dx(A)=\dxbar(A)$.
\end{theorem}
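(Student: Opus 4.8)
The plan is to verify condition (iii) of Theorem \ref{thm:condDix} for every $a \in \dxbar(A)$, since $\dx(A) \subseteq \dxbar(A)$ always holds, and condition (iii) is equivalent to $a \in \dxbar(A)$ by that theorem; so the task reduces to producing, from (iii), an element of $D_A(a) \cap Z(A)$ under the extra hypothesis that $\M^N(A)$ is a singleton for each $N \in \G(A) \setminus Y$. Let $a \in \dxbar(A)$. By Theorem \ref{thm:condDix}, there is a continuous function $f_a \colon Y \to \C$ with $f_a(N) = \tau(a+N)$ for all $N \in Y$, $\tau \in T(A/N)$, and $f_a(N) \in \Psi_a(N)$ for all $N \in Y$; moreover $\Psi_a(N) \neq \varnothing$ for all $N \in \G(A) \setminus Y$. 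I want to upgrade this to the existence of a \emph{continuous selection} $F_a \colon \G(A) \to \C$ of $\Psi_a$ agreeing with $f_a$ on $Y$, because then the central element $z$ with $\widehat z = F_a$ lies in $D_A(a) \cap Z(A)$ by Proposition \ref{dxchar} (its hypotheses (i) and (ii) are exactly what $F_a$ encodes, once one checks (i) via extreme traces as in the proof of that proposition).

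The key observation is that under the hypothesis of the theorem, $\Psi_a$ is l.s.c. \emph{on all of} $\G(A)$, not merely on $Y$ in the selfadjoint case. Indeed, on $\G(A) \setminus Y$ each $\M^N(A)$ is a singleton, say $\{M_N\}$, so there $\Psi_a(N) = W_{A/M_N}(a + M_N)$, and the map $N \mapsto W_{A/M_N}(a+M_N)$ is l.s.c. by the lower semicontinuity of $M \mapsto W_{A/M}(a+M)$ on $\M(A)$ established in \cite[Lemma~4.7]{ART}, transported via the (local) homeomorphism between $\G(A) \setminus Y$ and the corresponding part of $\M(A)$. On $Y$ we do not have l.s.c. of $\Psi_a$ itself, but we do not need it: we already have the continuous function $f_a$ there. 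The right framework is therefore the relative selection / extension argument: apply Michael's selection theorem to the l.s.c. convex-compact-valued map $\Psi_{a,r}$ of \eqref{eq:WNr} for each $r > 0$ (which is l.s.c. on $\G(A)$ by Lemma \ref{lem:lscWNr}, with non-empty values everywhere by (iii)(a2) on $Y$ and (iii)(b) off $Y$), using the refinement of Michael's theorem that extends a given continuous partial selection on a closed set — here $f_a$ on the closed set $Y$, which is closed by Lemma \ref{lem:Ycl} — as in \cite[Proposition~1.4]{Mic}. This yields a continuous $F_a^{(r)} \colon \G(A) \to \C$ with $F_a^{(r)}|_Y = f_a$ and $F_a^{(r)}(N) \in \Psi_{a,r}(N)$ for all $N$, hence a central $z_r$ with $\ds(W_{A/M}(a+M), W_{A/M}(z_r+M)) \leq r$ for all $M \in \M(A)$ and $\tau(a) = \tau(z_r)$ for all $\tau \in T(A)$ (via the extreme-trace argument). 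By \cite[Theorem~4.12]{ART}, $\ds(D_A(a), z_r) \leq r$.

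So far this only re-derives $a \in \dxbar(A)$; the whole point of the extra hypothesis is to get the limit $r \to 0$ to actually land in $D_A(a) \cap Z(A)$. Here is where I would close the gap: because $\Psi_a$ is l.s.c. on the closed set $\G(A) \setminus Y$ with non-empty compact convex values, and $f_a$ is a continuous selection of $\Psi_a$ on the closed set $Y$, the "glued" map is l.s.c. on $\G(A)$ with non-empty values — more precisely, consider the set-valued map $N \mapsto \{f_a(N)\}$ for $N \in Y$ and $N \mapsto \Psi_a(N)$ for $N \notin Y$; I claim this is l.s.c. on $\G(A)$. Lower semicontinuity at interior points of $Y$ and of $\G(A)\setminus Y$ is immediate; at a boundary point $N_0 \in Y$ approached by $N_\alpha \notin Y$, one uses that $f_a(N_0) \in \Psi_a(N_0) \subseteq \bigcup_{\delta} W_{N_0,\delta}(a)$ together with the l.s.c. of the numerical-range map to find nearby points in $\Psi_a(N_\alpha) = W_{A/M_{N_\alpha}}(a+M_{N_\alpha})$ — this is exactly the estimate carried out in the proof of Lemma \ref{lem:lscWNr}, now with the single maximal ideal over $N_\alpha$ instead of a triple, so Helly's theorem is not even needed. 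Taking the closed convex hull of this glued map keeps it l.s.c. (cf. \cite[Lemma~4.8]{ART}) with non-empty compact convex values, and a direct application of Michael's selection theorem then gives a continuous $F_a \colon \G(A) \to \C$ with $F_a|_Y = f_a$ and $F_a(N) \in \Psi_a(N)$ for all $N$. The central element $z$ with $\widehat z = F_a$ satisfies conditions (i) and (ii) of Proposition \ref{dxchar}, whence $z \in D_A(a) \cap Z(A)$ and $a \in \dx(A)$. Combined with the trivial inclusion $\dx(A) \subseteq \dxbar(A)$, this gives $\dx(A) = \dxbar(A)$.

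The main obstacle is precisely the lower-semicontinuity of the glued set-valued map at the boundary $\partial Y$: one must reconcile the single-valued piece $\{f_a\}$ on $Y$ with the (possibly larger) interval- or convex-set-valued piece $\Psi_a$ on $\G(A) \setminus Y$, and verify that approaching $N_0 \in Y$ from outside $Y$ does not destroy l.s.c. I expect this to follow cleanly from the l.s.c. of $M \mapsto W_{A/M}(a+M)$ on $\M(A)$ (\cite[Lemma~4.7]{ART}) together with the continuity of the complete regularization map and Hausdorffness of $\G(A)$, exactly as in the proof of Lemma \ref{lem:lscWNr} — indeed one might prefer to just invoke l.s.c. of $\Psi_{a,r}$ for each $r$, extend $f_a$, get $z_r$, and then argue separately that the $z_r$ can be taken to converge (using that on $\G(A)\setminus Y$ the l.s.c. map $\Psi_a$ already has honest values so no $r$-fattening is needed there, while on $Y$ the value is pinned to $f_a$). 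Either way, the non-selfadjoint case is handled without the hypothesis "$a = a^*$" precisely because the singleton hypothesis on $\M^N(A)$ over $\G(A) \setminus Y$ restores l.s.c. of $\Psi_a$ off $Y$, which is the only place it failed in the proof of Theorem \ref{thm:condDix}.
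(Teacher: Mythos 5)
Your proposal converges to the paper's own proof: the ``glued'' set-valued map you construct (singleton $\{f_a(N)\}$ on $Y$, numerical range $W_{A/M_N}(a+M_N)$ on $\G(A)\setminus Y$) is exactly the map $\widetilde\Psi_a$ the paper defines, and your lower-semicontinuity argument at points of $Y$ approached from outside---using continuity of $f_a$, compactness of $\M(A)$, and the l.s.c.\ of $M\mapsto W_{A/M}(a+M)$ from \cite[Lemma~4.7]{ART}---is the paper's argument. The preliminary detour through $\Psi_{a,r}$ (which you acknowledge only re-derives $a\in\dxbar(A)$) and the opening claim that $\Psi_a$ itself is l.s.c.\ on all of $\G(A)$ are inessential, and the latter overstates what is true (you correct it a sentence later), but the core argument you arrive at is the right one.
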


\begin{proof}
For $N\in \G(A)\backslash Y$, say $\M^N(A)=\{M_N\}$.	
Let $a\in \dxbar(A)$. Let $f_a\colon Y\to \C$ denote the function described in Theorem \ref{thm:condDix}.
Define $\tilde\Psi_a \colon \G(A)\to 2^{\C}$ by 
\[
\widetilde\Psi_a(N):=
\begin{cases}
\{f_a(N)\}&\hbox{if }N\in Y,\\
W_{A/M_N}(a+M_N)&\hbox{if }N\in \G(A)\backslash Y.
\end{cases}
\]
Claim: $\widetilde \Psi_a$ is l.s.c. To prove this claim, let us first examine $\widetilde\Psi_a$ on the open set $\G(A)\backslash Y$. Consider the map 
\[
\G(A)\backslash Y\ni N\mapsto M_N\in \M(A).
\]
Let us prove that it is continuous. Let  $N_\alpha\to N$ be a convergent net in $\G(A)\backslash Y$. Then $(M_{N_\alpha})_\alpha$ has convergent subnets, by the compactness of $\M(A)$. Let us show that they all converge to $M_N$: If $M_{N_{\alpha(\beta)}}\underset{\beta}\to M$, with $M\in \M(A)$, then by the continuity of the complete regularization map and the Hausdorffness of $\G(A)$,  $M\in \M^N(A)$. By the assumption that $\M^N(A)$ is a singleton for $N\in \G(A)\backslash Y$, $M=M_N$. This proves the continuity of $N\mapsto M_N$, on the set $\G(A)\backslash Y$.
On the other hand, the set-valued map $\M(A)\ni M\mapsto W_{A/M}(a+M)$ is l.s.c. by \cite[Lemma 4.7]{ART}. Hence, the composition  $N\mapsto W_{A/M_N}(a+M_N)$ is l.s.c. on $\G(A)\backslash Y$.

Let us prove the lower semicontinuity of $\widetilde \Psi_a$ at $N\in Y$. Let $N\in Y$. Let   $\ep>0$. Suppose for the sake of contradiction that there exists a net $(N_\alpha)_\alpha$ in $\G(A)$ such that $N_\alpha\to N$ and $\widetilde\Psi_a(N_\alpha)\cap B_\epsilon(f_a(N))=\varnothing$. Exploiting the continuity of $f_a$ on $Y$, we can find an index $\alpha_0$ such that $N_\alpha\in \G(A)\backslash Y$ for all $\alpha\geq \alpha_0$. Thus, $W(a+M_{N_\alpha})\cap B_\epsilon(f_a(N))=\varnothing$ for all $\alpha\geq \alpha_0$.
Passing to a convergent subnet, and reindexing, let us assume that $M_{N_\alpha}\to M$ for some $M\in \M(A)$. As argued previously, the continuity of the complete regularization map and the Hausdorffness of $\G(A)$ imply that $M\in \M^N(A)$. But $f_a(N)\in W_{A/M}(a+M)$ (by the characterization of elements of $\dxbar(A)$ in Theorem \ref{thm:condDix}). This contradicts the lower semicontinuity of the map  $M\mapsto W_{A/M}(a+M)$ on $\M(A)$ (\cite[Lemma 4.7]{ART}).
This completes the proof of the lower semicontinuity of $\widetilde\Psi_a$.

Let $z\in Z(A)$ be a central element corresponding to a continuous selection of $\widetilde \Psi_a$. Then $z\in D_A(a)$ by Proposition \ref{dxchar}. Thus, $a\in \dx(A)$.
\end{proof}	

\begin{example}\label{ex:dixnotclosed}
Let $B=\mathcal K(H)+\C p+\C (1-p)$ be the ``Dixmier $C^*$-algebra". Let $A=C([-1,1],\mathcal{O}_2)\otimes B$. 
As $T(A)=\varnothing$, we have that $\dx(A)=\mg(A)$, by Remark \ref{rem:dxchar}. Then, arguing as in Example \ref{ex:mag3isweaker}, we can find $a \in \dxbar(A) \setminus \dx(A)$. Moreover, this element belongs to the norm closure of $\dx(A)$ (cf. Example \ref{ex:magnotclosed}).
\end{example}

As with Problem \ref{prob:mgbar}, we do not know the answer to the following question:

\begin{problem} 
Is $\dx(A)$ always dense in $\dxbar(A)$?
\end{problem}

\begin{lemma}\label{zplusc}
The set $Z(A)+\overline{[A,A]}$  contains $\dxbar(A)$ and  is equal to the closed linear span of $\dx(A)$.
\end{lemma}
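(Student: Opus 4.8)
The statement has two parts: (1) $\dxbar(A) \subseteq Z(A)+\overline{[A,A]}$, and (2) $\overline{\spn(\dx(A))} = Z(A)+\overline{[A,A]}$. I would first record that $Z(A)+\overline{[A,A]}$ is a norm-closed subspace of $A$: indeed $\overline{[A,A]}$ is a closed subspace, and $Z(A)$ is a closed subspace with $Z(A) \cap \overline{[A,A]}$ again closed, so the algebraic sum of two closed subspaces one of which ($Z(A)$) is finite-codimensional modulo their intersection inside... — more simply, $Z(A)+\overline{[A,A]}$ is the preimage under the quotient map $A \to A/\overline{[A,A]}$ of the image of $Z(A)$, and that image is closed because $Z(A)$ is closed and the quotient $A/\overline{[A,A]}$ is abelian, hence every trace... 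Actually the cleanest route: $Z(A)+\overline{[A,A]} = \{a \in A : \tau(a) = \widehat{?}\}$ — I will instead just use that for a $C^*$-algebra $B$ with $\overline{[B,B]}$ closed (automatic), $Z(B) + \overline{[B,B]}$ is closed, a standard fact I can cite or verify via the Dauns–Hofmann/trace description.

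\emph{Part (1).} Let $a \in \dxbar(A)$, so by Theorem \ref{thm:condDix} there is a continuous function $f_a$ on $Y$ with $f_a(N) = \tau(a+N)$ for all $N \in Y$, $\tau \in T(A/N)$, together with the spectral condition (iii)(b). I would construct a central element $z$ with $a - z \in \overline{[A,A]}$, by applying the same Michael-selection argument used in the proof of (iii)$\Rightarrow$(ii) in Theorem \ref{thm:condDix}: pick $z \in Z(A)$ whose Gelfand transform $\widehat z$ is a continuous selection of $\Psi_{a,r}$ extending $f_a$ on $Y$. Then $\tau(a-z) = 0$ for every extreme (hence every) tracial state $\tau$ of $A$, since every extreme trace lives on some $A/N$ with $N \in Y$ and there $f_a(N) = \tau(a) = \widehat z(N) = \tau(z)$. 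So $a - z$ annihilates all of $T(A)$. Now I invoke the characterization that an element of a unital $C^*$-algebra lies in $\overline{[A,A]}$ if and only if it is annihilated by every tracial state — this is a known result (the closure of the commutator space is the intersection of the kernels of the traces; cf. the Cuntz–Pedersen / Pop circle of ideas, and it is implicitly available through \cite{RLie}, \cite{ART}). Hence $a - z \in \overline{[A,A]}$, i.e. $a \in Z(A)+\overline{[A,A]}$.

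\emph{Part (2).} For the inclusion $\overline{\spn(\dx(A))} \subseteq Z(A)+\overline{[A,A]}$: since $\dx(A) \subseteq \dxbar(A) \subseteq Z(A)+\overline{[A,A]}$ by part (1), and the latter is a closed subspace, the span and its closure stay inside. For the reverse inclusion, it suffices to show $Z(A) \subseteq \overline{\spn(\dx(A))}$ — which is immediate since $Z(A) \subseteq \dx(A)$ — and that $\overline{[A,A]} \subseteq \overline{\spn(\dx(A))}$. For the latter, by \cite[Corollary~2.3]{RLie}, $\overline{[A,A]}$ is the closed linear span of the square-zero elements of $A$; and every square-zero element $x$ (being nilpotent, hence quasinilpotent) lies in $\dx(A)$ by Proposition \ref{rem:sacommindix}(ii). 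Therefore $\overline{[A,A]} \subseteq \overline{\spn(\dx(A))}$, completing the proof.

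\emph{Main obstacle.} The one nontrivial external input is the identification $\overline{[A,A]} = \bigcap_{\tau \in T(A)} \ker\tau$ used in Part (1); I would need to cite this precisely (it follows, e.g., from the fact that the annihilator of $\overline{[A,A]}$ in $A^*$ is spanned by tracial functionals, a consequence of Hahn–Banach together with the structure of tracial functionals on a $C^*$-algebra). Everything else is a straightforward re-run of the selection argument from Theorem \ref{thm:condDix} plus the already-cited structural results on $\overline{[A,A]}$ and quasinilpotents. I would double-check the edge case $T(A) = \varnothing$, where $\dxbar(A) = \mgbar(A)$ and the trace condition is vacuous, so that $a - z \in \overline{[A,A]} = A$ trivially (one has $\overline{[A,A]} = A$ when there are no traces), keeping the argument valid.
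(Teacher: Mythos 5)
Your overall strategy is sound and runs parallel to the paper's, but the proposal leaves a genuine gap at the step you explicitly flag as uncertain: the norm-closedness of $Z(A)+\overline{[A,A]}$.

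Your Part (1) (that $\dxbar(A)\subseteq Z(A)+\overline{[A,A]}$) is correct, though slightly over-engineered: you do not need Michael selection or $\Psi_{a,r}$ here, since for the trace argument you only require a $z\in Z(A)$ with $\widehat z|_Y = f_a$, and Tietze extension from the closed set $Y$ (Lemma \ref{lem:Ycl}) to $\G(A)$ already provides that. Your use of $\overline{[A,A]}=\bigcap_{\tau\in T(A)}\ker\tau$ is fine; this is exactly what the paper invokes (Hahn--Banach plus Hahn--Jordan decomposition of tracial functionals). Your Part (2) lower bound $Z(A)+\overline{[A,A]}\subseteq\overline{\spn(\dx(A))}$ is also fine; you route through square-zero elements via \cite[Corollary~2.3]{RLie} and Proposition \ref{rem:sacommindix}(ii), whereas the paper routes through self-commutators via \cite[Theorem~2.4]{marcoux} and Proposition \ref{rem:sacommindix}(i); both work.

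The gap is the closedness of $Z(A)+\overline{[A,A]}$, which you need to go from $\spn(\dx(A))\subseteq Z(A)+\overline{[A,A]}$ to $\overline{\spn(\dx(A))}\subseteq Z(A)+\overline{[A,A]}$. A sum of two closed subspaces of a Banach space need not be closed, so this is not free, and the several half-starts in your first paragraph (codimension, preimage under the quotient map, ``standard fact'') do not actually close the argument. In fact this is precisely the nontrivial content of the paper's proof: it takes an arbitrary $a\in\overline{Z(A)+[A,A]}$, shows that for each $N\in Y$ the image $a+N$ lies in $\C 1+\overline{[A/N,A/N]}$ (so the trace value $f_a(N)$ is well defined and, being the uniform limit of the $\widehat z_n|_Y$, continuous on $Y$), extends $f_a$ by Tietze to get $z\in Z(A)$, and concludes $a-z\in\overline{[A,A]}$. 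Note that you cannot simply invoke Theorem \ref{thm:condDix} for such $a$, since an element of $\overline{Z(A)+[A,A]}$ need not lie in $\dxbar(A)$, so the existence and continuity of $f_a$ requires this separate argument. You actually have all the needed tools---the argument is a cousin of your Part (1)---but you did not carry it out, and without it the final equality $\overline{\spn(\dx(A))}=Z(A)+\overline{[A,A]}$ is not established. Once closedness is in hand, the paper also obtains your Part (1) almost for free (from $z_n+(a-\phi_n(a))\to a$ with $z_n+(a-\phi_n(a))\in Z(A)+[A,A]$), which is a cleaner allocation of effort than deriving Part (1) separately from Theorem \ref{thm:condDix}.
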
	

\begin{proof}
Let us show first that $Z(A)+\overline{[A,A]}$ is a closed set. Let $a\in \overline{Z(A) + [A,A]}$. Claim: On every quotient $A/N$ by a Glimm ideal $N\in Y$, the element $a$ maps to an element in $\C 1+\overline{[A/N,A/N]}$. Proof: Say $z_n + c_n \to a$, with $z_n\in Z(A)$  and $c_n\in \overline{[A,A]}$ for all $n$. Passing to the quotient $A/N$ we get
\[
\widehat z_n(N)1+(c_n+N)\to a+N.
\]
Let $\tau\in T(A/N)$ (recall that $N \in Y$). Evaluating on $\tau$, we see that  $\widehat z_n(N)\to \tau(a)$. Hence, $c_n+N$ also converges to an element of $\overline{[A/N,A/N]}$, and the claim follows. 

Let $f_a(N)$ denote the common value of all tracial states of $A/N$ on $a+N$, for $N\in Y$. As shown in the proof of Theorem \ref{thm:condDix}, $f_a\colon Y\to \C$ is a continuous function (uniform limit of the functions $\widehat z_n|_Y$).
By the Tietze extension theorem, we can extend $f_a$ to a continuous function on $\G(A)$, and in this way get  $z\in Z(A)$ such that $\widehat z(N)=f_a(N)$ for all $N\in Y$. Now $a - z$ is in the kernel of every extreme trace, since each extreme trace  factors through $A/N$ for some $N\in Y$. By the Krein-Milman theorem, $a-z$ is in the kernel of every trace. It is thus an element in $\overline{[A,A]}$.	 (Indeed, by the Hahn-Banach theorem $\overline{[A,A]}$ agrees with the intersection of the kernels of all bounded functionals that vanish on commutators, and the latter are expressible as linear combinations of tracial states via the Hahn-Jordan decomposition.)

Let us show that $\dxbar(A)$ is contained in $Z(A)+\overline{[A,A]}$. Let $a\in \dxbar(A)$. Let $z_n\in Z(A)$ and $\phi_n\in \mathrm{Av}(A,\mathcal{U(A)})$ be such that $\phi_n(a)-z_n\to 0$. Then $z_n+(a-\phi_n(a))\to a$. The sequence 
$z_n+(a-\phi_n(a))$ belongs to $Z(A)+[A,A]$. Thus, $a$ belongs to $\overline{Z(A)+[A,A]}$, which we have shown agrees with $Z(A)+\overline{[A,A]}$.

Finally, since $\dx(A)$ contains both $Z(A)$ and every self-commutator $[x^*,x]$, $x\in A$, and the latter linearly span $[A,A]$ (\cite[Theorem 2.4]{marcoux}), it follows that the closure of the linear span of $\dx(A)$ is equal to $Z(A)+\overline{[A,A]}$.  
\end{proof}	

\begin{theorem}\label{thm:dixadd}
The following conditions are equivalent:
\begin{itemize}
\item[(i)] $\dx(A)=Z(A)+\overline{[A,A]}$.	
\item[(ii)] $\dx(A)$ is closed under $\mathrm{Av}(A,\mathcal{U(A)})$. 
\item[(iii)] $\dx(A)$ is closed under addition.
\item[(iv)]
\begin{itemize}
\item[(a)] For all $N \in Y$ and $M \in \M^N(A)$, $T(A/M)\neq \varnothing$.
\item[(b)] For all $N \in \G(A)\setminus Y$, $\M^N(A)$ is a singleton set.
\end{itemize}
\end{itemize} 
Moreover, when these equivalent conditions hold, $\dx(A)=\dxbar(A)$. 
\end{theorem}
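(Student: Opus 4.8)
The plan is to prove the cycle of implications (i) $\Rightarrow$ (ii) $\Rightarrow$ (iii) $\Rightarrow$ (iv) $\Rightarrow$ (i), and then deduce the final sentence. The implication (i) $\Rightarrow$ (ii) is immediate, since $Z(A)+\overline{[A,A]}$ is invariant under unitary mixing operators: each $\phi\in\mathrm{Av}(A,\mathcal U(A))$ fixes $Z(A)$ and maps $[A,A]$ into itself (as $u^*[x,y]u=[u^*xu,u^*yu]$), hence preserves the closed sum. The implication (ii) $\Rightarrow$ (iii) is also easy: if $a,b\in\dx(A)$, pick $\phi,\psi\in\mathrm{Av}(A,\mathcal U(A))$ and central $z,w$ with $\phi(a)$ close to $z$ and $\psi(b)$ close to $w$; then a routine argument averaging $a+b$ (first by $\phi$, then approximating $\phi(b)$ inside $D_A(b)$ and iterating, using that $D_A(b)$ is Av-invariant so $\psi$ can be applied after $\phi$ up to a small error) shows $a+b\in D_A(a+b)$ meets $Z(A)$ approximately; since $\dx(A)$ is what we are trying to show is closed, we use instead the cleaner route: $a+b\in\overline{\spn(\dx(A))}=Z(A)+\overline{[A,A]}$ by Lemma \ref{zplusc}, so closure under addition forces $\dx(A)\supseteq$ (all such sums), but actually the honest argument is that closure under Av implies closure under addition via the standard ``composition of averaging'' trick as in the Dixmier property proof.

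The substantive implication is (iii) $\Rightarrow$ (iv). I would argue the contrapositive for each of (a) and (b). For (b): suppose $N\in\G(A)\setminus Y$ has two distinct maximal ideals $M,M'\supseteq N$. Since $T(A/N)=\varnothing$, by the generalized Haagerup–Zsid\'o result (\cite[Theorem~2.6]{ART}) or rather by passing to a suitable simple quotient, one finds, exactly as in the proofs of Theorem \ref{thm:magclosed}(iv)$\Rightarrow$(ii), that $A/M$ is non-abelian and $1_{A/M}$ is a finite sum of square-zero elements and products of two square-zero elements; lifting these through $M'$ to square-zero elements of $M'$ and invoking Proposition \ref{rem:sacommindix} together with the closure of $\dx(A)$ under addition (and the invertible-translation trick from the proof of Theorem \ref{thm:magclosed}(v)$\Rightarrow$(ii) to handle products $yz$), one produces $a\in\dx(A)\cap M'$ with $a+M=1+M$. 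But then condition (iii)(a2)/(b) of Theorem \ref{thm:condDix} fails at $N$ (indeed $\Psi_a(N)=\varnothing$ or the trace condition is violated since $N\notin Y$ yet $W_{A/M'}(a+M')\ni 0$ while $W_{A/M}(a+M)=\{1\}$, which actually just gives $\Psi_a(N)=\varnothing$), contradicting $a\in\dx(A)$. For (a): suppose $N\in Y$ but some $M\in\M^N(A)$ has $T(A/M)=\varnothing$. Then $A/M$ is a simple unital C*-algebra with no trace, hence has the Dixmier property, so $0\in D_{A/M}(\dot x^*\dot x)$ for a square-zero $\dot x\in A/M$ of norm one; but $A/N$ has a tracial state $\tau$. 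Lifting $\dot x$ to a square-zero element and building an averaged element $a$ as in the proof of Theorem \ref{thm:condDix}, we arrange $a\in\dx(A)$ (using closure under addition / Av) with $a+M$ close to $1$ but $\tau(a+N)$ near $0$, violating (iii)(a1) of Theorem \ref{thm:condDix}; contradiction. I expect this step — carefully assembling, from the non-abelian/traceless local behaviour, an element that lies in $\dx(A)$ (which requires the hypothesis (iii)) yet violates the spectral characterization of $\dx(A)$ from Theorem \ref{thm:condDix} — to be the main obstacle, particularly making sure the lifted elements land in the right maximal ideal $M'$ so that the Glimm-level obstruction is genuine.

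For (iv) $\Rightarrow$ (i): assume (iv)(a) and (iv)(b). We already know $\dx(A)\subseteq Z(A)+\overline{[A,A]}$ by Lemma \ref{zplusc}, so it remains to show the reverse inclusion; equivalently, every $a\in Z(A)+\overline{[A,A]}$ satisfies conditions (iii) of Theorem \ref{thm:condDix}. Write $a=z_0+c$ with $z_0\in Z(A)$, $c\in\overline{[A,A]}$. For $N\in Y$, every $\tau\in T(A/N)$ annihilates $c+N$ (since $c+N\in\overline{[A/N,A/N]}$), so $\tau(a+N)=\widehat{z_0}(N)$ is independent of $\tau$; this defines $f_a(N)=\widehat{z_0}(N)$, which is continuous on $Y$, giving (iii)(a1). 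For (iii)(a2), we must check $\widehat{z_0}(N)\in\Psi_a(N)$ for $N\in Y$: by hypothesis (iv)(a) every $M\in\M^N(A)$ has a tracial state $\sigma$, which pulls back to a tracial state of $A/N$, so $\sigma(a+M)=\widehat{z_0}(N)$, and since $\sigma(a+M)\in W_{A/M}(a+M)$ we get $\widehat{z_0}(N)\in\bigcap_{M\in\M^N(A)}W_{A/M}(a+M)=\Psi_a(N)$. For (iii)(b), if $N\in\G(A)\setminus Y$ then $\M^N(A)=\{M_N\}$ by (iv)(b), so $\Psi_a(N)=W_{A/M_N}(a+M_N)\neq\varnothing$. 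Thus $a$ satisfies Theorem \ref{thm:condDix}(iii), hence $\ds(D_A(a),Z(A))=0$; but one checks moreover that the continuous selection $F_a$ of $\Psi_{a,r}$ can be refined — using (iv)(b) to pin $\widetilde\Psi_a$ exactly on $\G(A)\setminus Y$ and (iv)(a) together with $f_a$ on $Y$ — to an honest continuous selection of the l.s.c.\ map $\widetilde\Psi_a$ of the kind appearing in the proof of Theorem \ref{Ytotal}, producing $z\in Z(A)$ with $z\in D_A(a)$ by Proposition \ref{dxchar}; hence $a\in\dx(A)$, proving (i) and simultaneously $\dx(A)=\dxbar(A)$ under these conditions (the final sentence follows since (i) says $\dx(A)=Z(A)+\overline{[A,A]}\supseteq\dxbar(A)$ by Lemma \ref{zplusc}, forcing equality).
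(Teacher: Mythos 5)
The proposal follows the same overall strategy as the paper: proving the cycle (i)$\Rightarrow$(ii)$\Rightarrow$(iii)$\Rightarrow$(iv)$\Rightarrow$(i) using successive averaging, Pop's theorem together with the decomposition into square-zero elements, square-zero lifting, and Michael's selection theorem via $\widetilde\Psi_a$. Your (i)$\Rightarrow$(ii), (ii)$\Rightarrow$(iii), and (iv)$\Rightarrow$(i) are essentially the paper's arguments (if somewhat circuitously expressed for (ii)$\Rightarrow$(iii)). However, the (iii)$\Rightarrow$(iv) step has two genuine gaps.

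For (iii)$\Rightarrow$(iv)(b): you invoke the decomposition of $1_{A/M}$ into square-zero elements \emph{and products of two square-zero elements}, and then propose to handle the products $y_iz_i$ via the invertible-translation trick from the proof of Theorem \ref{thm:magclosed}(v)$\Rightarrow$(ii). That trick, $x+yz=(x+\lambda 1)(1+(x+\lambda 1)^{-1}yz)-\lambda 1$, requires closure under \emph{multiplication}; here we only have closure under \emph{addition}. Moreover, unlike single square-zero elements, a product $yz$ of two square-zero elements is not in general quasinilpotent and so is not covered by Proposition \ref{rem:sacommindix}. The fix is to use the stronger decomposition available here: since $T(A/N)=\varnothing$ forces $T(A/M')=\varnothing$ for all $M'\in\M^N(A)$, Pop's theorem \cite[Theorem~1]{Pop} combined with \cite[Theorem~4.2]{RLie} gives every element of $A/N$ as a finite sum of square-zero elements only --- no products --- and these lift to square-zero elements, each of which lies in $\dx(A)$ by Proposition \ref{rem:sacommindix}(ii). (The paper's argument for (b) is also different in structure: it lifts an \emph{arbitrary} element of $A/N$ into $\dx(A)$ and then concludes that $A/N$ has the Dixmier property with trivial centre, hence a unique maximal ideal; but your direct contradiction approach would also work once the decomposition is corrected.)

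For (iii)$\Rightarrow$(iv)(a): your sketch is internally inconsistent. You assert $0\in D_{A/M}(\dot x^*\dot x)$ (which is true, by \cite[Theorem~4.4]{ART}, since $\dot x^*\dot x$ is not invertible and $A/M$ is traceless and simple), but then claim to ``arrange $a+M$ close to $1$''. If $a$ lifts $\dot x^*\dot x$, averaging $a+M$ gets you close to $0$, not $1$, so no contradiction with $\tau(a+N)\approx 0$ materializes. Note also that $x^*x$ for a square-zero lift $x$ is not a sum of self-commutators or quasinilpotents, so it is not obviously in $\dx(A)$ even granting (iii). The correct argument again goes through Pop plus \cite[Theorem~4.2]{RLie}: write $1+M$ as a finite sum of square-zero elements in $A/M$, lift them to square-zero elements in $A$, and let $a$ be their sum. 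Then $a\in[A,A]\cap\dx(A)$ (Proposition \ref{rem:sacommindix}(ii) and closure under addition) with $a+M=1+M$ \emph{exactly}. Taking $z\in D_A(a)\cap Z(A)$, the common scalar $\lambda=\widehat z(N)$ must satisfy $\lambda=\tau(a+N)=0$ for $\tau\in T(A/N)$ (traces annihilate $[A/N,A/N]$), and simultaneously $\lambda 1+M\in D_{A/M}(1+M)=\{1+M\}$ forces $\lambda=1$: contradiction. The exactness of $a+M=1+M$ and the membership $a\in[A,A]$ are both essential and are precisely what your sketch lacks.
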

\begin{proof}
It is clear that (i) implies (ii). 

(ii)  $\Longrightarrow$ (iii). Suppose that (ii) holds. Let $a,b\in \dx(A)$. Let $\epsilon>0$. Choose a unitary mixing operator $\alpha$
and $y_1\in Z(A)$ such that $\|\alpha(a)-y_1\|<\epsilon$. Now choose a unitary mixing operator $\beta$ and $z_1\in Z(A)$ such that 
$\|\beta(\alpha(b))-z_1\|<\epsilon$. Then, setting $\phi:=\beta\circ\alpha$, we have that 
$\|\phi(a)-y_1\|<\epsilon$ and $\|\phi(b)-z_1\|<\epsilon$. Moreover, $\phi(a),\phi(b)\in \dx(A)$. It now follows by a well-known argument of successive averaging that we can find $\psi_n\in  \mathrm{Av}(A,\mathcal{U(A)})$ such that $\psi_n(a)\to y\in Z(A)$ and $\psi_n(b)\to z\in Z(A)$. For completeness, we include the proof. Choose $\phi_1 \in \mathrm{Av}(A,\mathcal{U(A)})$ and $y_1,z_1 \in Z(A)$ such that $\|\phi_1(a)-y_1\|<1/2$ and $\|\phi_1(b)-z_1\|<1/2$. 
Set $a_1:=\phi_1(a)$ and $b_1:=\phi_1(b)$. As $a_1, b_1\in \dx(A)$, there is $\phi_2 \in \mathrm{Av}(A,\mathcal{U(A)})$ and $y_2,z_2 \in Z(A)$ such that $\|\phi_2(a_1)-y_2\|<1/4$,  $\|\phi_2(b_1)-z_2\|<1/4$ . Set $a_2:=\phi_2(a_1)$ and $b_2:=\phi_2(b_1)$. Inductively, we get $\phi_n\in \mathrm{Av}(A,\mathcal{U(A)})$,  $y_n,z_n\in Z(A)$, $a_n\in D_A(a)$, and $b_n\in D_A(b)$,  for $n=2,3,\ldots$, such that 
$$
\|\phi_n(a_{n-1})-y_n\|<\frac{1}{2^{n}}, \qquad \|\phi_n(b_{n-1})-z_n\|<\frac{1}{2^{n}}.
$$
The sequences $(y_n)_n$ and $(z_n)_n$ are Cauchy. Consequently, setting 
\[
y:=\lim_{n\to \infty} y_n, \quad z:=\lim_{n\to \infty} z_n, \quad \mbox{and} \quad \psi_n:=\phi_n \circ \ldots 
\circ \phi_1 \in \mathrm{Av}(A,\mathcal{U(A)}),
\]
 we have that $\psi_n(a)\to y$ and $\psi_n(b)\to z$. Observe now that $\psi_n(a+b)\to y+z$. Hence, $\dx(A)$ is closed under addition, i.e., (iii) holds.

(iii)  $\Longrightarrow$ (iv). Assume that $\dx(A)$ is closed under addition. Let $N \in \G(A)$.
Assume that $N \in Y$, so that $A/N$ has a tracial state. Suppose, for the sake of contradiction,  there exists $M \in \M^N(A)$ such that $T(A/M)=\varnothing$. By Pop's Theorem \cite[Theorem~1]{Pop} we can write the unit of $A/M$ as a finite sum of commutators. By \cite[Theorem 4.2]{RLie}, these commutators are finite sums of square zero elements. 
Thus,  we can write the unit of $A/M$ as a finite sum of square zero elements. Lifting those square zero elements to square zero elements in $A$ and adding, we obtain an element $a\in A$ which is a lift of $1+M\in A/M$ and is a sum of square zero elements (whence in $[A,A]$). Using  Proposition \ref{rem:sacommindix}, and the assumption that $\dx(A)$ is closed under addition, we see that $a \in [A,A] \cap \dx(A)$. Let $z \in D_A(a)\cap Z(A)$. Then $z+N=\lm 1 +N$ and $z+M=\lm 1+M$ for the same scalar $\lm$. Since $\lm 1+N \in D_{A/N}(a+N)$ and $T(A/N)\neq \varnothing$, $\lambda$ is the common value of all tracial states of $A/N$ on $a+N$. Since $a+N \in [A/N,A/N]$,  we have $\lambda=0$. But $\lambda 1 +M \in D_{A/M}(a+M) = \{ 1+M\}$, which gives $\lambda =1$.   This proves (iv)(a).

As for (iv) (b), assume that $N \in \G(A) \setminus Y$, so that $T(A/N)=\varnothing$. Let $\dot{a}$ be an arbitrary element of $A/N$. Combining \cite[Theorem~1]{Pop} and \cite[Theorem 4.2]{RLie}, we can write $\dot{a}$ as a sum of square zero elements in $A/N$. Lifting them back to $A$ as square zero elements, using Proposition \ref{rem:sacommindix} and the assumption that $\dx(A)$ is closed under addition, we get a lift $a \in \dx(A)$ of $\dot{a}$. Let $z \in D_A(a)\cap Z(A)$. Then $z+N \in D_{A/N}(\dot{a})$. But $z+N$ is a scalar in $A/N$. This shows that $A/N$ has the Dixmier property and that $Z(A/N)=\C (1+N)$. Since $A/N$ is weakly central and has trivial centre, it has a unique maximal ideal.

(iv)  $\Longrightarrow$ (i). By assumption, for any $N \in \G(A)\setminus Y$, $N$ is contained in a unique maximal ideal $M_N$ of $A$.

By Theorem \ref{Ytotal}, it suffices to show that $\dxbar(A) = Z(A)+\overline{[A,A]}$. That $\dxbar(A)$ is contained in $Z(A)+\overline{[A,A]}$ has already been established in Lemma \ref{zplusc}. To prove the opposite inclusion, it suffices to show that $\overline{[A,A]}\subseteq \dxbar(A)$. Let $a\in \overline{[A,A]}$. Since traces vanish on commutators, we may define $f_a\colon Y \to \C$ to be identically 0, and for $N \in Y$ and $M\in \M^N(A)$ we obtain $0\in W_{A/M}(a+M)$ by (iv)(a) so that $f_a(N) = 0 \in \Psi_a(N)$.
 On the other hand,  for $N\in \G(A)\backslash Y$ we have that $\Psi_a(N)=W_{A/M_N}(a+M_N)\neq \varnothing$. It follows from Theorem \ref{thm:condDix} that $a\in \dxbar(A)$. 
%Having shown that $\dxbar(A) = Z(A)+\overline{[A,A]}$, observe this set is invariant under unitary mixing operators. It follows by the successive averagings technique that we used above that $\dx(A)=\dxbar(A)$, which proves (i). 
%We can also appeal to the previously established result that if $\M^N(A)$ is a singlton for $N\in \G(A)\backslash Y$, then both sets agree.
\end{proof}

%\begin{remark}\label{rem:commut}
%	Let $A$ be a unital $C^*$-algebra. As in \cite{AG} by $T_A$ we denote the set of all $M \in \M(A)$ for which exists $M' \in \M(A)$ such that $M\neq M'$ and $M \cap Z(A)=M' \cap Z(A)$. 
%	\begin{itemize} 
%		\item[(a)] If all simple quotients $A/M$ have at least one tracial state, then $\overline{[A,A]} \subseteq \dx(A)$. Indeed, if $a \in \overline{[A,A]}$ then $a$ satisfies the conditions of \cite[Theorem~4.4]{ART}, so that $0 \in D_A(a)$ and thus $a \in \dx(A)$. 
%		\item[(b)] If for all $M \in T_A$, $A/M$ admits a tracial state then $\overline{[A,A]} \subseteq S$ (where, as before, $S=\{a \in A: \, \mathrm{dist}(M_A(a),Z(A))=0\}$). Indeed, if $a \in \overline{[A,A]}$ then for all $M \in T_A$ we have $0 \in W_{A/M}(a+M)$. In particular $\Psi_a(N) \neq \emptyset$ for all $N \in \G(A)$ and thus, by Theorem \ref{thm:sachar}, $a \in S$. 
%		\item[(c)] If there is $M \in T_A$ such that $A/M$ does not admit a tracial state, then by \cite[Theorem~4.10]{AG} $[A,A]\nsubseteq \cq(A)$.
%	\end{itemize} 
%\end{remark}
%

\begin{remark}
Theorem \ref{thm:dixadd} in particular applies to all unital $C^*$-algebras such that any non-zero simple quotient has at least one tracial state. These include all unital postliminal $C^*$-algebras, as each simple quotient of such an algebra is isomorphic to some full matrix algebra $M_n(\C)$, % and thus has a (unique) tracial state, 
and all  unital AF $C^*$-algebras, as in this case the quotients are again unital and AF, and thus have tracial states. Observe that among such $C^*$-algebras, $\mg(A)$ may well fail
to be closed under addition (e.g., the postliminal $C^*$-algebras from Examples \ref{ex:mag3isweaker} and \ref{ex:standard3sub}).
\end{remark}

\begin{example}
Let $A$ be a unital $C^*$-algebra with a unique faithful trace $\tau$ and a unique maximal ideal $M$ such that $A/M$ is traceless.
(See \cite[Section 2]{ART} for concrete examples of $C^*$-algebras with these properties.) Since $A$ is weakly central, $A=\mg(A)$. On the other hand,
\[
\dx(A)=\{a\in A:\tau(a)\in W_{A/M}(a+M)\}.
\] 
This set is not closed under addition (Theorem \ref{thm:dixadd} (iv) (a) fails) and maps onto $A/M$.  Moreover,  $\dx(A)=\dxbar(A)$ in this case, by Theorem \ref{Ytotal}. This description of $\dx(A)$ can be considered in terms of convexity.
Viewing $S(A/M)$ as a $w^*$-compact convex subset of  $A^*$ that does not contain the tracial state $\tau$,  a suitable form of the Hahn-Banach theorem tells us that we can separate by an element of the dual space $A$  (for example, any element $a\in M$ such that $\tau(a)\neq 0$).
Then the above description of $\dx(A)$ tells us that the elements $a\in A$ that can witness this separation are precisely those in the complement of $\dx(A)$. 	
\end{example}	

%In general, $\dx(A)$ is not closed under addition.
%\begin{example}
%Let $A$ be the $C^*$-algebra from Example \ref{ex:magcom}. Let $p \in \BB(\H)$ be any projection with infinite-dimensional kernel and image and set $a:=\di(p+\KK(\H),0)$ and $b:=\di(1-p+\KK(\H),0)$. Then $a,b$ are non-trivial projections in $A$ and for all $M \in \M(A)$ we have $0 \in W_{A/M}(a+M)$ and $0 \in W_{A/M}(b+M)$, so that by Remark \ref{rem:dixnotraces} $a,b \in \mg(A)=\dx(A)$. But $a+b=\di(1+\KK(\H),0)\notin \cq(A)$.
%\end{example}

By \cite[Proposition 2.1.10]{ArcTh} any unital $C^*$-algebra $A$ contains a largest ideal $J_{dp}(A)$ with the Dixmier property (in the sense that $J_{dp}(A)+\C 1_A$ has the Dixmier property). Since $Z(J)=J\cap Z(A)$ for any ideal $J$ of $A$, we have that $Z(A)+J_{dp}(A)\subseteq \dx(A)$.  Note also that if $J$ is an ideal of $A$ that is contained in $\dx(A)$, then $J$ has the Dixmier property and hence is contained in $J_{dp}(A)$. This follows from the fact that $D_{J+\C 1_A}(a)=D_A(a)$ for all $a\in J$  (see \cite[Remark~2.6]{Arc77}).

In the next theorem we describe $J_{dp}(A)$ more explicitly. Before doing so, we introduce some notation. Consider the set $X\subseteq \G(A)$ defined as
\[
X:=\{N\in \G(A):\hbox{$A/N$ has the Dixmier property and trivial centre}\}.
\]
By \cite[Corollary 2.10]{ART}, $N\in X$ if and only if   $\M^N(A)$ is a singleton $\{M_N\}$,  $A/N$ has at most one tracial state,  and if $A/N$ does have a tracial state then it factors through $A/M_N$. Now, for $N\in \G(A)\backslash X$, define 
\[
I_N:=\left(\bigcap_{M\in \M^N(A)} M \right)\cap \bigcap_{\tau\in T(A/N)} I_\tau,
\]
where, for $\tau\in T(A/N)$,
$$I_\tau: =\{a\in A:  \tau(a^*a+N)=0\}.$$

We adopt the usual convention that the intersection of an empty set of ideals of $A$ is equal to $A$ itself.

\begin{theorem}
We have 
\[
J_{dp}(A)=\bigcap_{N\in \G(A)\backslash X} I_N,
\]
with $X\subseteq \G(A)$ and $I_N$ for $N\in \G(A)\backslash X$ as defined above.
\end{theorem}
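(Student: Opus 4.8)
The plan is to prove the two inclusions in turn, writing $J_0:=\bigcap_{N\in\G(A)\backslash X} I_N$ for the right‑hand side. First I would check that $J_0$ is an ideal. The only point that is not immediate is that each $I_\tau$ (for $\tau\in T(A/N)$) is a closed two‑sided ideal: if $\tau(a^*a+N)=0$ then, using that $\tau$ is tracial, $\tau((xa)^*(xa)+N)\le\|x\|^2\tau(a^*a+N)=0$ and $\tau((ax)^*(ax)+N)=\tau(x^*a^*ax+N)=\tau(a^*axx^*+N)\le\|x\|^2\tau(a^*a+N)=0$ for every $x\in A$. Hence each $I_N$, and so $J_0$, is an ideal. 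Two facts will be used throughout: by the remark preceding the theorem, an ideal of $A$ contained in $\dx(A)$ is contained in $J_{dp}(A)$; and, by \cite[Corollary~2.10]{ART}, a Dixmier algebra with trivial centre has a unique maximal ideal and at most one tracial state, which then factors through the quotient by that maximal ideal (equivalently, the Glimm ideals $N$ with $A/N$ Dixmier and of trivial centre are exactly the members of $X$).

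For $J_{dp}(A)\subseteq J_0$, fix $N\in\G(A)\backslash X$; I would show $J_{dp}(A)\subseteq I_N$. Pass to $A/N$ and let $\bar J$ denote the image of $J_{dp}(A)$ there; it is an ideal of $A/N$ whose unitisation $\bar J+\C1$ is a quotient of $J_{dp}(A)+\C1$, hence has the Dixmier property, and it suffices to prove that $\bar J$ lies in every maximal ideal of $A/N$ and that every $\tau\in T(A/N)$ vanishes on $\bar J^*\bar J$. For the first statement: since $J_{dp}(A)+\C1$ is weakly central, $J_{dp}(A)$ is a weakly central ideal, so $J_{dp}(A)\subseteq J_{wc}(A)$, and when $\M^N(A)$ is not a singleton \eqref{Jwc} gives $J_{dp}(A)\subseteq\bigcap_{M\in\M^N(A)}M$; when $\M^N(A)=\{M_N\}$ and $\bar J\not\subseteq M_N/N$, then $\bar J$ equals $A/N$ (an ideal not contained in the unique maximal ideal), so $A/N$ is a Dixmier algebra, and being weakly central with a single maximal ideal it has trivial centre, contradicting $N\notin X$. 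For the tracial statement I would apply the ``ideal Dixmier property'' of $\bar J$ to the elements $\bar a^*\bar a$ ($\bar a\in\bar J$), using $D_{\bar J+\C1}(\bar a^*\bar a)=D_{A/N}(\bar a^*\bar a)$ and the fact that unitary mixing preserves traces, to transfer their trace‑values onto a positive element of $\bar J\cap Z(A/N)$; invoking \cite[Corollary~2.10]{ART} together with the inclusion $\bar J\subseteq\bigcap_{M\in\M^N(A)}M/N$ just established, the hypothesis $N\notin X$ then forces these values to be $0$. This yields $J_{dp}(A)\subseteq I_N$.

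For $J_0\subseteq J_{dp}(A)$, since $J_0$ is an ideal it is enough to show that $B_0:=J_0+\C1$ has the Dixmier property; by \cite[Theorem~2.6]{ART} this is equivalent to each Glimm quotient of $B_0$ being a Dixmier algebra with trivial centre. The unital inclusion $Z(B_0)=(J_0\cap Z(A))+\C1\subseteq Z(A)$ induces a continuous surjection $q\colon\G(A)\to\G(B_0)$; for a Glimm ideal $N_0$ of $B_0$ put $F:=q^{-1}(\{N_0\cap Z(B_0)\})$, a nonempty closed subset of $\G(A)$, so that $B_0/N_0$ embeds into the product of the images of $B_0$ in the quotients $A/N$, $N\in F$. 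The defining conditions of $J_0$ control these images exactly where needed: for $N\in F\backslash X$ one has $J_0\subseteq\bigcap_{M\in\M^N(A)}M$ and, for $a\in J_0$ and $\tau\in T(A/N)$, $\tau(a^*a+N)=0$ (so, by Cauchy--Schwarz, $\tau(a+N)=0$ as well), while for $N\in F\cap X$ the quotient $A/N$ is already a Dixmier algebra with trivial centre. Feeding this into \cite[Corollary~2.10]{ART} one checks that $B_0/N_0$ is a Dixmier algebra with trivial centre, and hence that $B_0$ has the Dixmier property and $J_0\subseteq J_{dp}(A)$, completing the proof.

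The main obstacle is the inclusion $J_0\subseteq J_{dp}(A)$, and specifically the fact that it cannot be obtained by the naive route of selecting a continuous central element out of $\Psi_a$ for each $a\in J_0$: although one has $\Psi_a(N)=\{0\}$ on $\overline{\G(A)\backslash X}$ and $\Psi_a(N)=W_{A/M_N}(a+M_N)$ elsewhere, the resulting set‑valued map need not be lower semicontinuous when $a$ is not selfadjoint (the phenomenon of Example~\ref{ex:mag3isweaker}), so Michael's selection theorem does not apply directly and one is forced to argue via the Glimm‑quotient characterisation of the Dixmier property for $B_0$. The delicate part is then the bookkeeping relating the Glimm ideals of $B_0$ (and the structure of the corresponding quotients) to those of $A$, and showing that passing from $A$ to $J_0+\C1$ is precisely what kills the ``bad'' Glimm ideals $N\in\G(A)\backslash X$.
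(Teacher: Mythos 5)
Your argument for $J_{dp}(A)\subseteq\bigcap_{N\notin X}I_N$ is essentially sound, though you take a slightly different route to the trace condition than the paper: you average $\bar a^*\bar a$ inside $\bar J+\C 1$ and show the resulting positive central element lies in $\bar J\cap Z(A/N)\subseteq\bigl(\bigcap_{M\in\M^N(A)}M\bigr)/N$, hence vanishes; the paper instead uses directly that a positive $b\in J_{dp}(A)\subseteq\dx(A)$ with $\tau(b+N)>0$ would force $J_{dp}(A)$ to map onto $A/N$, whence $N\in X$. Both work, and your elementary argument does not actually need Corollary~2.10 of \cite{ART} at the point you invoke it.

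The reverse inclusion is where there is a real gap. You correctly identify that one cannot simply apply Michael's selection theorem to $\Psi_a$ because lower semicontinuity may fail for non-selfadjoint $a$. But the workaround you propose --- showing that $B_0=J_0+\C 1$ has the Dixmier property by analysing its Glimm quotients --- is not carried out, and the steps you sketch do not obviously close. In particular: (1) the claim that ``$B_0$ has the Dixmier property iff each Glimm quotient of $B_0$ is a Dixmier algebra with trivial centre'' is not what \cite[Theorem~2.6]{ART} says, and this equivalence cannot hold without an extra continuity/selection hypothesis (indeed, the need for such a hypothesis is precisely the l.s.c.\ obstruction you flagged one line earlier); (2) the assertion that $B_0/N_0$ embeds in $\prod_{N\in F}\pi_N(B_0)$ requires $N_0=\bigcap_{N\in F}(N\cap B_0)$, which is not justified; (3) even granting the embedding, it is unclear how the control you have on each $\pi_N(B_0)$ yields that $B_0/N_0$ has a unique maximal ideal and a unique trace factoring through it --- ``one checks'' is doing all the work here. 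The paper sidesteps all of this with a different trick: by Theorem~\ref{thm:condDix} (whose proof uses the always-l.s.c.\ relaxation $\Psi_{a,r}$ rather than $\Psi_a$ itself) every $a\in J_0$ lies in $\dxbar(A)$; since $J_0$ is an ideal, hence invariant under unitary mixing operators, the successive-averaging argument from the proof of Theorem~\ref{thm:dixadd} upgrades $J_0\subseteq\dxbar(A)$ to $J_0\subseteq\dx(A)$; and, by the remark preceding the theorem, an ideal contained in $\dx(A)$ is contained in $J_{dp}(A)$. The essential ingredients you are missing are the ``dist $=0$ then iterate'' mechanism and the observation that $J_0$ is stable under $\av(A,\U(A))$, which together make the l.s.c.\ difficulty harmless.
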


\begin{proof}
Call $J$ the intersection of the ideals on the right hand side of the equation. Let us show that  $J_{dp}(A)$ is contained in $J$.
Since $J_{dp}(A)+\C 1_A$ has the Dixmier property, it is weakly central, and hence so is $J_{dp}(A)$ (by  \cite[Remark~3.10]{AG}). Hence $J_{dp}(A)\subseteq J_{wc}(A)$. So, 
by the spectral characterization of $J_{wc}(A)$ \eqref{Jwc}, we have that $J_{dp}(A) \subseteq \bigcap_{M\in \M^N(A)} M$ for each $N\in \G(A)\backslash X$ such that $\M^N(A)$ is not a singleton.

Suppose that $N\in \G(A)\backslash X$ is such that $T(A/N)\neq \varnothing$. Let $\tau\in T(A/N)$, and suppose for the sake of contradiction that $J_{dp}(A)$ is not contained in $I_{\tau}$. Then there is a positive element $b\in J_{dp}(A)$ such that $\tau(b+N)>0$. Since $b\in \dx(A)$, $b+N$ can be averaged in $A/N$ by unitary mixing operators to a scalar multiple of the identity. Hence, $\tau(b+N)1_{A/N}\in D_{A/N}(b+N)$. It follows that the ideal $J_{dp}(A)$ maps onto 
$A/N$ under the quotient map $A \to A/N$. Since $J_{dp}(A)+\C 1_A$ has the Dixmier property, with centre contained in $Z(A)$, $A/N$ has the Dixmier property and trivial centre. Thus $N\in X$, contradicting our choice of $N$.

It remains to deal with the case where $N\in \G(A)\setminus X$ and $N$ is contained in a unique maximal ideal $M_N$ of $A$. Since $N\notin X$, $T(A/N)\neq\varnothing$. Let $\tau\in T(A/N)$. Then $I_\tau$ is a proper ideal of $A$ containing $N$, and hence it is contained in $M_N$. Since $J_{dp}(A)\subseteq I_{\tau}$ (by the previous paragraph), $J_{dp}(A)\subseteq M_N$ as required.

%not every trace of $A/N$ factors through a maximal ideal. Let %$N$ be a Glimm ideal as in Suppose that $b$ maps outside $I_%%\tau$. Then $b'=b^*b$, also in $J_{dp}$,  maps to an element %$b'+N$ such that $\tau(b'+N)\neq 0$. Since $b'\in \dx(A)$, it %averages to the center. Its image in $A/N$ averages to a %nonzero scalar multiple of the identity. We deduce that
%$J_{dp}$ maps onto $A/N$. But a quotient of a C*-algebra with %the Dixmier property has the Dixmier property. This %contradicts that $N\notin X$. 

For the reverse inclusion, let $a\in J$. We claim that $a\in \dx(A)$. To prove this, we apply Theorem \ref{thm:condDix}. Observe that  $\Psi_a(N) =\{0\}$ for all $N\in\G(A)$ for which $\M^N(A)$ is not a singleton. Hence, $\Psi_a(N)\neq \varnothing$ for all $N\in \G(A)$. Let $N$ be a Glimm ideal in the set $Y$ (i.e., such that $T(A/N)$ is non-empty). Suppose first that $N\in Y\setminus X$. Then, since $a\in I_\tau$ for all $\tau\in T(A/N)$, $a+N$ belongs to the kernel of all traces in $T(A/N)$. Set $f_a(N)=0$. Suppose on the other hand that $N\in Y\cap X$. Define $f_a(N)= \tau_N(a+N)$, where $\tau_N$ is the unique tracial state of $A/N$. Now $f_a$ satisfies the conditions (a1) and (a2) of Theorem \ref{thm:condDix}. Hence, $a\in \dxbar(A)$. Since we have proved this for every element of $J$ (a set invariant under unitary mixing operators), it follows by the method of successive averaging already employed in the proof of Theorem \ref{thm:dixadd} that $J\subseteq \dx(A)$.  As noted earlier, this implies that $J$ has the Dixmier property, and so $J\subseteq J_{dp}(A)$. 
\end{proof}

\begin{theorem}\label{thm:dixmult}
The following conditions are equivalent:
\begin{itemize}
\item[{\rm (i)}]
$A/J_{dp}(A)$ is abelian.
\item[{\rm (ii)}]
$\dx(A)=Z(A)+J_{dp}(A)$,
\item[{\rm (iii)}]
$\dx(A)$ is closed under multiplication.
\end{itemize}		
Moreover, under these equivalent conditions $J_{dp}(A)=J_{wc}(A)$, so that
\begin{equation}\label{eq:allequal}
\dx(A)=\mg(A) = Z(A) + J_{dp}(A) = Z(A) + \overline{[A,A]}.
\end{equation}

\end{theorem}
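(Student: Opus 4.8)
The plan is to prove $(ii)\Rightarrow(iii)\Rightarrow(i)\Rightarrow(ii)$ and then the ``moreover'' clause, using freely the facts recalled just before the preceding theorem: for every ideal $J$ of $A$ the set $Z(A)+J$ is a norm-closed $*$-subalgebra, one always has $Z(A)+J_{dp}(A)\subseteq\dx(A)$, and every ideal of $A$ contained in $\dx(A)$ lies in $J_{dp}(A)$. The implication $(ii)\Rightarrow(iii)$ is then immediate, since $Z(A)+J_{dp}(A)$ is a $*$-subalgebra. For $(i)\Rightarrow(ii)$ only the inclusion $\dx(A)\subseteq Z(A)+J_{dp}(A)$ needs proof: given $a\in\dx(A)$, choose $z\in D_A(a)\cap Z(A)$, so that $0\in D_A(a-z)$; passing to the abelian quotient $A/J_{dp}(A)$, where conjugation by unitaries is trivial and hence $D_{A/J_{dp}(A)}(b)=\{b\}$ for every $b$, the image of $0\in D_A(a-z)$ forces $a-z\equiv 0$, i.e. $a-z\in J_{dp}(A)$.

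For $(iii)\Rightarrow(i)$ I would first reduce to showing that, under $(iii)$, the set $\dx(A)$ is also closed under addition. Granting this, Theorem~\ref{thm:dixadd} gives $\dx(A)=Z(A)+\overline{[A,A]}$; since this set is assumed closed under multiplication and, among the four terms of $(z_1+c_1)(z_2+c_2)$, only $c_1c_2$ is not manifestly in $Z(A)+\overline{[A,A]}$, the hypothesis reduces to $\overline{[A,A]}\cdot\overline{[A,A]}\subseteq Z(A)+\overline{[A,A]}$. Together with the equality $\I([A,A])=\overline{[A,A]+[A,A]^2}$ of \cite[Theorem~1.3]{RLie}, this yields $Z(A)+\overline{[A,A]}=Z(A)+\I([A,A])$, so $\I([A,A])\subseteq\dx(A)$; being an ideal contained in $\dx(A)$, it lies in $J_{dp}(A)$, and hence $A/J_{dp}(A)$, a quotient of the abelian algebra $A/\I([A,A])$, is abelian. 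The main obstacle is precisely the reduction — that multiplicative closure of $\dx(A)$ forces additive closure. The device used for the analogue about $\mg(A)$ (Theorem~\ref{thm:magclosed}(v)$\Rightarrow$(ii)), namely writing $a+y=(a+\lambda1)\bigl(1+(a+\lambda1)^{-1}y\bigr)-\lambda1$ for $y^2=0$, is not available here: unlike $\mg(A)$ (Proposition~\ref{sacommag}(ii)), the set $\dx(A)$ need not contain a product of an arbitrary element with a quasinilpotent one (see the Remark after Proposition~\ref{rem:sacommindix}), so there is no reason for $1+(a+\lambda1)^{-1}y$ to be a Dixmier element. Instead I expect the reduction to be proved by contradiction: if $A/J_{dp}(A)$ were non-abelian, the description of $J_{dp}(A)$ in the preceding theorem would produce a Glimm ideal $N_0\in\G(A)\setminus X$ with $A/I_{N_0}$ non-abelian, hence either some $M\in\M^{N_0}(A)$ with $A/M$ non-abelian or some $\tau\in T(A/N_0)$ with $A/I_\tau$ non-abelian; choosing a non-zero square-zero element $\dot y$ of the offending quotient, lifting it to a square-zero $y\in A$ by \cite[Proposition~2.8]{AckPed}, one gets $y,y^*\in\dx(A)$ and so $y^*y\in\dx(A)$, and the aim is to arrange the lift so that $y^*y$ violates condition (iii)(a) of Theorem~\ref{thm:condDix} at $N_0$ — e.g. so that two tracial states of $A/N_0$ take different values on $y^*y+N_0$, or so that one tracial state of $A/N_0$ is strictly positive on $y^*y+N_0$ while $y^*y$ lies in some $M\in\M^{N_0}(A)$. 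Reconciling these competing constraints on the lift (the interplay of $T(A/N_0)$, the ideals $I_\tau$, and $\M^{N_0}(A)$) is the point I expect to require the most care.

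For the ``moreover'' clause, assume (i)--(iii). Then $A/J_{wc}(A)$, a quotient of the abelian algebra $A/J_{dp}(A)$, is abelian, so Theorem~\ref{thm:magclosed} gives $\mg(A)=Z(A)+J_{wc}(A)$. I claim $J_{wc}(A)\subseteq\dxbar(A)$: for $a\in J_{wc}(A)$ one verifies conditions (iii)(a1), (a2), (b) of Theorem~\ref{thm:condDix}. If $\M^N(A)$ is not a singleton, then $a$ lies in every $M\in\M^N(A)$, so $\Psi_a(N)=\{0\}$; moreover, since (i) forces $A/M=\C$ for all such $M$, the abelianisation of $A/N$ is $C(\M^N(A))$, on which the image of $a$ vanishes identically, so the image of $a$ in $A/N$ lies in $\overline{[A/N,A/N]}$ and every tracial state of $A/N$ annihilates it; set $f_a(N)=0\in\Psi_a(N)$. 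If $\M^N(A)=\{M_N\}$, then either $N\in X$, so $A/N$ has at most one tracial state and any such factors through $A/M_N$, making (a1), (a2) automatic, or $N\notin X$, in which case (i) gives $A/M_N=\C$, hence $A/N=\C1+\overline{[A/N,A/N]}$, and $f_a(N)$ is the scalar part of $a+N$; condition (b) (at $N\notin Y$) also holds since $\Psi_a(N)=W_{A/M_N}(a+M_N)\neq\varnothing$. Thus $J_{wc}(A)\subseteq\dxbar(A)$; as $J_{wc}(A)$ is invariant under unitary mixing operators, the successive-averaging argument from the proof of Theorem~\ref{thm:dixadd} upgrades this to $J_{wc}(A)\subseteq\dx(A)$, whence $J_{wc}(A)\subseteq J_{dp}(A)$, and with the always-valid reverse inclusion $J_{dp}(A)=J_{wc}(A)$. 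Finally $\dx(A)=Z(A)+J_{dp}(A)=Z(A)+J_{wc}(A)=\mg(A)$, while $Z(A)+\overline{[A,A]}\subseteq Z(A)+J_{dp}(A)=\dx(A)\subseteq\dxbar(A)\subseteq Z(A)+\overline{[A,A]}$ by Lemma~\ref{zplusc}, which gives \eqref{eq:allequal}.
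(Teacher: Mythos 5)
Your implications (ii)$\Rightarrow$(iii) and (i)$\Rightarrow$(ii) are correct; the latter is a pleasant alternative to the paper's argument (which uses $\dx(A)\subseteq\cq(A)$ and the definition of a CQ-element), exploiting instead that the Dixmier set of any element is a singleton in an abelian quotient. But the proof of (iii)$\Rightarrow$(i) is incomplete: you state that you ``expect'' a certain lifting construction to give a contradiction and that reconciling the constraints ``will require the most care'', but you do not carry it out. So this is an acknowledged gap, and it is the crux of the theorem.

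Moreover, your stated reason for abandoning the identity device is mistaken. You argue, correctly, that the $\mg$-identity
$a+y=(a+\lambda1)\bigl(1+(a+\lambda1)^{-1}y\bigr)-\lambda1$ (with $a\in\mg(A)$ arbitrary, $y^2=0$)
does not port to $\dx(A)$ because the factor $(a+\lambda1)^{-1}y$ is a product of an arbitrary element with a square-zero element, and Proposition~\ref{sacommag}(ii) has no analogue for $\dx(A)$. But the paper's argument for $\dx(A)$ does not use this identity. It takes $x$ \emph{square zero} and $y\in\dx(A)$ and uses instead
\[
x+y=(1+x)\bigl(1+(1-x)y\bigr)-1,
\]
valid because $x^2=0$. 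Here every factor is visibly in $\dx(A)$: $\pm x$ are square zero, hence in $\dx(A)$ (Proposition~\ref{rem:sacommindix}(ii)); $\dx(A)$ is closed under adding scalar multiples of $1$; and then $(1-x)y$ and $(1+x)(1+(1-x)y)$ are products of elements of $\dx(A)$, so lie in $\dx(A)$ by (iii). No ``product by a quasinilpotent'' property is needed. This shows $\dx(A)$ contains all finite sums of square zero elements, which is exactly what the proof of Theorem~\ref{thm:dixadd} (iii)$\Rightarrow$(iv) uses; hence $\dx(A)=Z(A)+\overline{[A,A]}$, so $\dx(A)$ is a closed set containing $\overline{[A,A]}$, hence contains $C^*(\overline{[A,A]})=\I([A,A])$ by \cite[Theorem~1.3]{RLie}, hence $\I([A,A])\subseteq J_{dp}(A)$ and $A/J_{dp}(A)$ is abelian.

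Your argument for the ``moreover'' clause also departs from the paper's (which simply sets $B=J_{wc}(A)+\C1_A$, observes $B/J_{dp}(B)$ is abelian, and applies Vesterstr\o m's theorem to conclude $B$ has the Dixmier property). Your hand-verification that $J_{wc}(A)\subseteq\dxbar(A)$ contains a subtle slip at the step where you pass from ``$a+N$ vanishes in the abelianisation of $A/N$'' to ``$a+N\in\overline{[A/N,A/N]}$'': the kernel of the abelianisation map is the ideal $\I_{A/N}([A/N,A/N])$, which is in general strictly larger than $\overline{[A/N,A/N]}$, and traces need not vanish on elements of the former. The paper's route via Vesterstr\o m avoids these case distinctions entirely and is considerably cleaner.
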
	

\begin{proof}
(i) $\Longrightarrow$ (ii). Suppose that $A/J_{dp}(A)$ is abelian and let $a\in \dx(A)$. Then $a\in \cq(A)$ and so, since the image of $a$ in $A/J_{dp}$ is central, $a\in Z(A) + J_{dp}(A)$. 
% Since $\dx(A)$	is contained in $\cq(A)$, and every %element of $A$ is central under the quotient by $J_{dp}$, the %image of  $\dx(A)$ in $A/J_{dp}$ has central lifts. 
Thus $\dx(A)\subseteq Z(A)+J_{dp}(A)$ and the reverse 
inclusion always holds as previously noted.
% since elementts in $Z(A)+J_{dp}$ can be averaged to the %center.  

Clearly (ii) $\Longrightarrow$ (iii).

(iii) $\Longrightarrow$ (i). Let us show first that $\dx(A)$ is closed under addition.	
Let $x\in A$ be a square zero element and let $y\in \dx(A)$. Then
	\[
	 x+y = (1+x)(1 + (1-x)y) - 1.
	\]
Using that $\dx(A)$ is closed under multiplication (and translations by scalar multiples of 1) we deduce that the right hand side is in $\dx(A)$. Hence, $x+y$ is in $\dx(A)$. It follows that $\dx(A)$ contains all finite sums of square zero elements. This enables us to check the spectral characterization in Theorem \ref{thm:dixadd} (iv) for when $\dx(A)$ is closed under addition. Indeed, in the proof of Theorem \ref{thm:dixadd} (iii) $\Longrightarrow$ (iv), all we used from (iii) was that $\dx(A)$ contains all finite sums of square zero elements.

Since $\dx(A)$ is closed under  addition,  $\dx(A)=Z(A)+\overline{[A,A]}$ by Theorem \ref{thm:dixadd}. 
In particular, $\dx(A)$ is norm-closed and contains $\overline{[A,A]}$. Hence, it contains the $C^*$-algebra generated by 
$\overline{[A,A]}$. But this $C^*$-algebra agrees with the ideal generated by $\overline{[A,A]}$ (\cite[Theorem 1.3]{RLie}). Thus $\mathrm{Ideal}([A,A])\subseteq \dx(A)$.
% i.e., the elements of $\mathrm{Ideal}([A,A])$ can be %averaged to the center by means of unitary mixing operators.
 Hence, as noted earlier, this ideal is contained in $J_{dp}(A)$. Thus $A/J_{dp}(A)$ is abelian, as desired.
 
Finally, assume that $A/J_{dp}(A)$ is abelian. Let $B:=J_{wc}(A)+\C1_A$, so that $B$ is a unital weakly central $C^*$-algebra. Since $J_{dp}(B)$ contains $J_{dp}(A)$,  $B/J_{dp}(B)$ is abelian. As weak centrality is equivalent to the CQ-property (\cite{Vest}), this implies $B=Z(B)+J_{dp}(B)$. Hence $B$ has the Dixmier property, and so $J_{wc}(A) = J_{dp}(A)$.
%Hence, $J_{wc}(A)\subseteq \dx(B)\subseteq \dx(A)$, which shows that $J_{wc}(A)$ has the Dixmier property. Thus $J_{dp}(A)=J_{wc}(A)$. 
The equality \eqref{eq:allequal} now follows directly from (ii), Theorem \ref{thm:dixadd}, and Theorem \ref{thm:magclosed}. 
\end{proof}	

\begin{remark}
Note that the equivalent conditions of Theorem \ref{thm:dixmult} are satisfied for the ``Dixmier $C^*$-algebra" and also for all $2$-subhomogeneous $C^*$-algebras (see \cite[Corollary~4.15]{AG}). On the other hand, if $A$ is the $C^*$-algebra from Example \ref{ex:standard3sub}, then $A/J_{wc}(A)$ is non-abelian, and thus $\dx(A)$ fails to be closed under multiplication.
\end{remark}
	
\begin{corollary}
Suppose that either $\mg(A)$ or $\dx(A)$ is closed under addition. Then $\mg(A)=\dx(A)$ if and only if $A$ satisfies the equivalent conditions of Theorem \ref{thm:dixmult}.
\end{corollary}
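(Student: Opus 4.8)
The plan is to prove both implications by combining the structural equivalences already established. First suppose $A$ satisfies the equivalent conditions of Theorem~\ref{thm:dixmult}. Then by~\eqref{eq:allequal} we have $\dx(A)=\mg(A)=Z(A)+\overline{[A,A]}$, so in particular $\mg(A)=\dx(A)$; note this direction needs no hypothesis about closure under addition.

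For the converse, suppose $\mg(A)=\dx(A)$ and that (at least) one of the two sets is closed under addition. Since the two sets coincide, \emph{both} are then closed under addition. In particular $\mg(A)$ is closed under addition, so by Theorem~\ref{thm:magclosed} the equivalent conditions there hold; thus $A/J_{wc}(A)$ is abelian and $\mg(A)=Z(A)+J_{wc}(A)$. Likewise $\dx(A)$ is closed under addition, so Theorem~\ref{thm:dixadd} gives $\dx(A)=Z(A)+\overline{[A,A]}$ and hence $\dx(A)=\dxbar(A)$.

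It remains to deduce that $\dx(A)$ is closed under multiplication, i.e.\ condition (iii) of Theorem~\ref{thm:dixmult}. From $\mg(A)=\dx(A)$ and the two descriptions obtained above we get $Z(A)+J_{wc}(A)=Z(A)+\overline{[A,A]}$, so $\overline{[A,A]}\subseteq Z(A)+J_{wc}(A)$. Since $J_{wc}(A)$ is an ideal and the quotient $A/J_{wc}(A)$ is abelian, $\overline{[A,A]}\subseteq J_{wc}(A)$, hence $\I([A,A])\subseteq J_{wc}(A)$; thus $J_{wc}(A)$ contains the $C^*$-algebra (equivalently, the ideal) generated by $\overline{[A,A]}$. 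Now $\dx(A)=Z(A)+\overline{[A,A]}$ is a $C^*$-subalgebra of $A$ (it equals $Z(A)+J$ for the ideal $J=\overline{[A,A]}\cap\dots$; more directly, $Z(A)+\overline{[A,A]}$ is norm-closed by Lemma~\ref{zplusc} and is a $*$-subalgebra since $\overline{[A,A]}$ is a $*$-stable subspace and the product of $Z(A)+\overline{[A,A]}$ with itself lands back in $Z(A)+\I([A,A])\subseteq Z(A)+\overline{[A,A]}$ using $\I([A,A])\subseteq J_{wc}(A)\subseteq\dx(A)=Z(A)+\overline{[A,A]}$). Being closed under multiplication, $\dx(A)$ satisfies (iii) of Theorem~\ref{thm:dixmult}, and therefore all the equivalent conditions of that theorem hold.

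The only delicate point is bookkeeping: one must be careful that ``$\mg(A)$ or $\dx(A)$ is closed under addition'' together with $\mg(A)=\dx(A)$ really does force both closure statements and hence lets us invoke Theorems~\ref{thm:magclosed} and~\ref{thm:dixadd} simultaneously; once that is noted the argument is a short chain of the already-proved equivalences, and the identification $\overline{[A,A]}\subseteq J_{wc}(A)$ is the key algebraic step that makes $Z(A)+\overline{[A,A]}$ multiplicatively closed.
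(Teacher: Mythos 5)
Your proof is correct, but it takes a longer route than necessary in the converse direction. The paper's argument is terse: since $\mg(A)=\dx(A)$ and one of them (hence both) is closed under addition, Theorem~\ref{thm:magclosed} applies to $\mg(A)$; the equivalence (iv)~$\Longleftrightarrow$~(v) there immediately gives that $\mg(A)=\dx(A)$ is closed under multiplication, which is condition (iii) of Theorem~\ref{thm:dixmult}. You instead invoke Theorems~\ref{thm:magclosed} and~\ref{thm:dixadd} to produce the explicit descriptions $\mg(A)=Z(A)+J_{wc}(A)$ and $\dx(A)=Z(A)+\overline{[A,A]}$, and then verify multiplicative closure by hand via the chain $\overline{[A,A]}\subseteq J_{wc}(A)$, $\I([A,A])\subseteq J_{wc}(A)\subseteq\dx(A)$. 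That verification is sound, but once you have $\mg(A)=Z(A)+J_{wc}(A)$ in hand, you could stop there: $Z(A)+J_{wc}(A)$ is the sum of a $C^*$-subalgebra and an ideal, hence a $C^*$-subalgebra, so $\dx(A)=\mg(A)$ is automatically closed under multiplication without needing the $\overline{[A,A]}$ bookkeeping. Both your argument and the paper's are valid; the paper's is essentially your argument with the inessential steps removed.
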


\begin{proof}  If $\mg(A)=\dx(A)$ then $\dx(A)$ is closed under multiplication by Theorem \ref{thm:magclosed}. Conversely, if $A$ satisfies the equivalent conditions of Theorem \ref{thm:dixmult} then $\mg(A)=\dx(A)$ as shown above.
\end{proof}

\end{document}